\let\svthefootnote\thefootnote
\newcommand\blankfootnote[1]{%
\let\thefootnote\relax\footnotetext{#1}%
\let\thefootnote\svthefootnote%
}
\newtheorem*{rep@theorem}{\rep@title}
\newcommand{\newreptheorem}[2]{%
\newenvironment{rep#1}[1]{%
 \def\rep@title{#2 \ref{##1}}%
 \begin{rep@theorem}}%
 {\end{rep@theorem}}}
\theoremstyle{plain}
\newtheorem{theorem}{Theorem}[section]
\newtheorem{lemma}[theorem]{Lemma}
\newtheorem{conjecture}[theorem]{Conjecture}
\theoremstyle{definition}
\DeclareMathAlphabet{\mathbbmsl}{U}{bbm}{m}{sl}
\DeclareMathAlphabet{\mathpzc}{OT1}{pzc}{m}{it}
\DeclareMathAlphabet{\mathsfit}{T1}{\sfdefault}{\mddefault}{\sldefault}\SetMathAlphabet{\mathsfit}{bold}{T1}{\sfdefault}{\bfdefault}{\sldefault}
\DeclareRobustCommand\widecheck[1]{{\mathpalette\@widecheck{#1}}}
\def\@widecheck#1#2{%
\setbox\z@\hbox{\m@th$#1#2$}%
\setbox\tw@\hbox{\m@th$#1%
\widehat{%
\vrule\@width\z@\@height\ht\z@
\vrule\@height\z@\@width\wd\z@}$}%
\dp\tw@-\ht\z@
\@tempdima\ht\z@ \advance\@tempdima2\ht\tw@ \divide\@tempdima\thr@@
\setbox\tw@\hbox{%
\raise\@tempdima\hbox{\scalebox{1}[-1]{\lower\@tempdima\box
\tw@}}}%
{\ooalign{\box\tw@ \cr \box\z@}}}
\tikzstyle{none}=[inner sep=0pt]
\tikzstyle{rn}=[circle,fill=Red,draw=Black,line width=0.8 pt]
\tikzstyle{gn}=[circle,fill=Lime,draw=Black,line width=0.8 pt]
\tikzstyle{yn}=[circle,fill=Yellow,draw=Black,line width=0.8 pt]
\tikzstyle{simple}=[-,draw=Black,line width=2.000]
\tikzstyle{arrow}=[-,draw=Black,postaction={decorate},decoration={markings,mark=at position .5 with {\arrow{>}}},line width=2.000]
\tikzstyle{tick}=[-,draw=Black,postaction={decorate},decoration={markings,mark=at position .5 with {\draw (0,-0.1) -- (0,0.1);}},line width=2.000]
\tikzstyle{newstyle}=[-,draw=Green,line width=2.000]
\begin{document}

\title{\bf{\LARGE   On     saturation numbers of complete  multipartite graphs and even cycles}\\[9mm]}

\author{
Ali Mohammadian \qquad   Milad Poursoltani \qquad  Behruz Tayfeh-Rezaie \\[4mm]
School of Mathematics,\\
Institute for Research in Fundamental Sciences (IPM),\\
P.O. Box 19395-5746, Tehran, Iran\\[3mm]
\href{mailto:ali\_m@ipm.ir}{ali\_m@ipm.ir} \qquad      \href{mailto:mpour@ipm.ir}{mpour@ipm.ir} \qquad \href{mailto:tayfeh-r@ipm.ir}{tayfeh-r@ipm.ir}\\[9mm]}

\date{}

\maketitle

\begin{abstract}
Given positive integer $n$ and graph   $F$, the saturation number       $\mathrm{sat}(n, F)$   is the minimum  number of edges in  an  edge-maximal $F$-free graph on $n$ vertices.
In this paper,   we determine asymptotic behavior of    $\mathrm{sat}(n, F)$   when $F$ is either a complete multipartite graph or a cycle graph whose   length is  even and large enough.
This extends  a    result by  Bohman,   Fonoberova, and  Pikhurko from  2010 as well as  partially resolves a conjecture of  F\"uredi and  Kim  from 2013. \\[1mm]

\noindent{\bf Keywords:}  Complete multipartite graph, Cycle graph,  Saturation number. \\[-1mm]

\noindent{\bf AMS 2020 Mathematics Subject Classification:}    05C35, 05C38.  \\[9mm]
\end{abstract}

\section{Introduction}

Extremal problems are among the most studied topics in graph theory.
They  are very popular and have a huge literature.
Here,    we concentrate on the saturation problem on graphs.
Throughout this paper, all graphs are assumed to be finite, undirected,  and without loops or multiple edges.  Let $F$ be  a given  graph. We say that a graph $ G $ is {\it $ F $-free} if $ G $ has no subgraph isomorphic to $ F $. In   1941, Tur\'an \cite{tur} posed one of the  foundational problems   in extremal graph theory  which was about the  maximum number
of edges in   an $ F $-free   graph on $n$ vertices. Later, in 1949, Zykov \cite{zyk} introduced a dual idea   which asks for  the minimum  number of edges in an  edge-maximal  $F$-free graph  on  $n$ vertices.
Formally,   a  graph $G$ is said to be   {\it  $F$-saturated}
if $G$ is $F$-free and, for any  two nonadjacent vertices $x$ and $y$ of $G$, the  graph resulting   from $G$ by joining $x$ and $y$ with  an edge    is not  $F$-free.
The  minimum number of edges  in  an  $F$-saturated  graph on $n$ vertices  is  denoted by $\mathrm{sat}(n, F)$.

Let $ K_s $ denote  the complete   graph on   $s$ vertices and $ K_{s_1, \ldots, s_r} $ denote   the complete $r$-partite graph with parts of sizes $s_1, \ldots, s_r$.
Erd\H{o}s, Hajnal, and Moon     \cite{erd}   proved  that
\begin{align}\label{kn}\mathrm{sat}(n, K_s)=(s-2)n-{{s-1}\choose{2}},\end{align}  where $n\geqslant s\geqslant 2$.
Generally,  the exact value  of   $ \mathrm{sat}(n, K_{s_1, \ldots, s_r})$   is  still unknown.
K\'aszonyi  and   Tuza    \cite{kas}  proved that
\begin{align*}
\mathrm{sat}(n, K_{1, s})=\left\{\begin{array}{llll}\vspace{-4.5mm}&\\
\mathlarger{{{s}\choose{2}}+{{n-s}\choose{2}}} & \quad  \mbox{{\large if }} \mathlarger{s+1\leqslant n\leqslant\frac{3s}{2}}\mbox{;}\\
\vspace{-1mm}\\
\mathlarger{\left\lceil\frac{s-1}{2}n-\frac{s^2}{8}\right\rceil}   & \quad    \mbox{{\large if }} \mathlarger{n\geqslant\frac{3s}{2}}\mbox{.}\\\vspace{-4.25mm}&
\end{array}\right.
\end{align*}
As an asymptotic result, Bohman, Fonoberova, and Pikhurko \cite{pikh} established  that
\begin{align}\label{34}
\mathrm{sat}(n, K_{s_1, \ldots, s_r})=\left(s_1+\cdots+s_{r-1}+\frac{s_r-3}{2}\right)n+O\left(n^{\frac{3}{4}}\right),
\end{align}
where  $r\geqslant2$ and   $1\leqslant s_1\leqslant\cdots\leqslant s_r$.
In     Section \ref{scmg} of this paper, we extend  \eqref{34} by proving the following  theorem.

\begin{theorem}\label{mainCMG}
For any   fixed  integers   $r\geqslant2$ and    $1\leqslant s_1\leqslant\cdots\leqslant s_r$,
$$\mathrm{sat}(n, K_{s_1, \ldots, s_r})=\left(s_1+\cdots+s_{r-1}+\frac{s_r-3}{2}\right)n+O(1).$$
\end{theorem}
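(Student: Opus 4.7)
The plan is to prove the theorem by matching an upper-bound construction with a lower-bound argument. Equation~\eqref{34} already delivers the correct leading coefficient with error $O(n^{3/4})$, so the task is to sharpen both sides so that the error becomes $O(1)$.

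For the upper bound, I would exhibit an explicit $K_{s_1,\ldots,s_r}$-saturated graph $G_n$. The template is to fix an apex set $A$ of size $s_1+\cdots+s_{r-1}-1$ inducing the complete multipartite graph $K_{s_1,\ldots,s_{r-2},s_{r-1}-1}$, completely join $A$ to the remaining vertex set $B$, and place on $B$ a carefully chosen (essentially) $(s_r-1)$-regular graph $H$. The resulting edge count is $|A|\cdot|B|+(s_r-1)|B|/2+O(1)=\bigl(s_1+\cdots+s_{r-1}+(s_r-3)/2\bigr)n+O(1)$, as required. To verify that $G_n$ is $K_{s_1,\ldots,s_r}$-free, I would case-analyse how the parts of a hypothetical embedded copy split between $A$ and $B$: each part lies entirely in one of $A$ or $B$ (since $B$-vertices are adjacent to every vertex of $A$), the bound $|A|<s_1+\cdots+s_{r-1}$ forces at least two parts into $B$, and the ensuing complete bipartite subgraph required inside $H$ violates its maximum-degree constraint $s_r-1$ except in degenerate size configurations, which the specific internal structure of $A$ is chosen to rule out. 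Verifying saturation requires that adding any missing edge $xy$ of $H$ produces a copy of $K_{s_1,\ldots,s_r}$ in $G_n+xy$; this step forces $H$ to satisfy prescribed common-neighborhood conditions at every non-adjacent pair, so the construction must use a specific regular family rather than an arbitrary regular graph.

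For the lower bound, I would begin with a minimum $K_{s_1,\ldots,s_r}$-saturated graph $G$ and isolate a \emph{high-degree core} $S\subseteq V(G)$ of bounded size. The saturation property, applied locally at non-edges whose endpoints both lie outside $S$, should pin almost every degree in $G-S$ to be exactly $|S|+(s_r-1)$; a double-count then yields the target edge bound up to an error proportional to the number of \emph{anomalous} vertices of atypical degree. The crux of the improvement over \cite{pikh} is reducing the number of such anomalous vertices from the $O(n^{3/4})$ slack allowed by the global argument of Bohman, Fonoberova, and Pikhurko down to an absolute constant depending only on $r$ and $s_1,\ldots,s_r$. I expect this to follow from a purely local pigeonhole-style argument: around each anomalous vertex, the saturation condition should force either a forbidden copy of $K_{s_1,\ldots,s_r}$ or the vertex's membership in~$S$. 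The degenerate regimes, such as $s_{r-1}=s_r$ or many indices $s_i=1$, will demand the most delicate treatment, and I anticipate that the bulk of the technical work concentrates there.
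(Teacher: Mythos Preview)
Your upper bound plan is sound but unnecessarily laborious: the paper simply invokes the K\'aszonyi--Tuza bound (Theorem~\ref{th:known}), which for $F=K_{s_1,\ldots,s_r}$ gives exactly $\tfrac{2s-s_r-3}{2}n-\tfrac{(s-1)(s-s_r-1)}{2}$. Your explicit apex-plus-regular construction is essentially the graph underlying that theorem, so you are re-deriving a known result.

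The lower bound is where your proposal has a genuine gap, and also where it diverges from the paper. You assume the existence of a bounded ``high-degree core'' $S$ that is (essentially) complete to $G\setminus S$, and then try to pin the degrees of the remaining vertices. But you give no mechanism for producing such a core, and the degree-pinning claim (``exactly $|S|+(s_r-1)$'') already presupposes the completeness of the join $S$ to $G\setminus S$, which is precisely the hard structural statement you would need to prove. The paper never establishes, nor needs, any such structure.

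Instead, the paper works from the opposite end. It looks at the set $A$ of \emph{low}-degree vertices (degree $\leqslant 2s-s_r-4$) and, via a pigeonhole argument on the copies $F_{xy}$ forced by saturation at non-edges $xy$ inside an independent set of low-degree vertices (Lemma~\ref{bigneib}), shows that any set $M\subseteq A$ in which every pair has at most $s-s_r-2$ common neighbours has bounded size (Lemma~\ref{MO1}). Taking $M$ maximal with this property and letting $N=\bigcup_{v\in M}N_H(v)$, the set $N$ is bounded, and maximality forces every moderate-degree vertex in $Y=\{v\notin N:\,s-2\leqslant\deg(v)\leqslant 2s-s_r-4\}$ to have at least $s-s_r-1$ neighbours in $N$. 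The edge count then runs as
\[
|E(H)|\;\geqslant\;\sum_{v\in X}\frac{\deg(v)}{2}\;+\;(s-s_r-1)|Y|\;+\;\sum_{v\in Y}\frac{\deg(v)-(s-s_r-1)}{2}\;\geqslant\;\frac{2s-s_r-3}{2}\bigl(|X|+|Y|\bigr),
\]
which is $\tfrac{2s-s_r-3}{2}\,n+O(1)$ once one also bounds the very-low-degree set $Z=\{v:\deg(v)\leqslant s-3\}$ by $O(1)$ (Lemma~\ref{CO1}). The point is that the $Y$-to-$N$ edges are counted \emph{once} (not halved), because $N$ is a bounded set; this replaces your hoped-for ``$S$ complete to the rest'' without ever proving it. Your ``local pigeonhole'' instinct is in the right direction, but it should be applied to the partition data $(T_x^y,T_y^x,S_{xy})$ of the forced copies $F_{xy}$, not to the anomalous vertices themselves.
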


\noindent  Our proof for  Theorem \ref{mainCMG}  is    short and simple.  We   recall  that     Theorem \ref{mainCMG}  has been  established    in \cite{pikh} provided  $s_{r-1}\neq s_r$ by    a rather long and complicated proof.

Let $C_k $ denote  the cycle   graph on   $k$ vertices. Generally, it seems hard  to determine  the   exact value  of  $\mathrm{sat}(n, C_k)$. Until now, only  the exact values of $\mathrm{sat}(n, C_k)$ are known   for  $k\in\{3, 4, 5\}$.
It follows from  \eqref{kn} that $\mathrm{sat}(n, C_3)=n-1$ for every integer $n\geqslant3$. Also,
we know that $\mathrm{sat}(n, C_4)=\lfloor(3n-5)/2\rfloor$ for every integer $n\geqslant5$  which   is due to  Ollmann \cite{oll} and  $\mathrm{sat}(n, C_5)=\lceil10(n-1)/7\rceil$ for every integer $n\geqslant21$  which   is due to     Chen \cite{chen}.
For $C_6$,
Lan,    Shi,    Wang, and   Zhang  \cite{lan} showed  that
$4n/3-2\leqslant \mathrm{sat}(n, C_6)\leqslant (4n+1)/3$ for  every integer  $n\geqslant9$. For longer   cycles, F\"uredi and  Kim  \cite{fk} established  that
\begin{align}\label{fur}\frac{k+3}{k+2}n-1<\mathrm{sat}(n, C_k)< \frac{k-3}{k-4}n +{k-4\choose 2},\end{align}
where  $k\geqslant 7$ and $n\geqslant 2k-5$. They   proposed the following conjecture.

\begin{conjecture}[\cite{fk}]\label{conjfur}
There exists a constant $k_0$ such that $$\mathrm{sat}(n, C_k)= \frac{k-3}{k-4}n +O(k^2)$$ holds  for  all integers $k\geqslant k_0$.
\end{conjecture}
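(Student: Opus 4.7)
The plan is to prove both inequalities $\frac{k-3}{k-4}n - O(k^2) \leqslant \mathrm{sat}(n, C_k) \leqslant \frac{k-3}{k-4}n + O(k^2)$ for all integers $k \geqslant k_0$ and all sufficiently large $n$, uniformly in the parity of $k$. The upper bound is immediate from the F\"uredi--Kim bound \eqref{fur}, since $\binom{k-4}{2} = O(k^2)$, so the real content is the matching lower bound uniformly for even and odd $k$.

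First, I would set up structural dichotomy. Let $G$ be a $C_k$-saturated graph on $n$ vertices with $e$ edges. Call a vertex \emph{light} if its degree is at most $k/2$ and \emph{heavy} otherwise. The saturation hypothesis gives the working tool: for every non-edge $xy$ there is an $x$--$y$ path of length exactly $k-1$ in $G$. I would inspect how such a path can traverse a light vertex $v$, classify light vertices into a bounded number of local types (isolated leaves, members of short pendant paths, centres of small trees attached to heavy cores, etc.), and attach to each light vertex a \emph{certificate}: a connected subgraph of order $O(k)$ whose presence forces the low degree.

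Second, I would carry out a discharging argument. Assign initial charge $\deg_G(v)$ to each vertex, with target charge $\tfrac{2(k-3)}{k-4}$. Redistribute charge from heavy vertices to light neighbours according to rules calibrated to the certificates above, where the saturation condition guarantees enough length-$(k-1)$ paths through each heavy vertex to meet the demand of its light neighbourhood. Summing the final charges over all vertices gives $2e \geqslant \tfrac{2(k-3)}{k-4}(n - |\mathcal{E}|)$, where $\mathcal{E}$ is an exceptional set of vertices on which the target cannot be met locally.

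Third, I would bound $|\mathcal{E}| = O(k^2)$. A vertex in $\mathcal{E}$ should give rise, through its saturating paths, to a $(C_k - e)$-like obstruction of order $O(k)$ embedded in $G$. Two such obstructions that are far apart would, via a further saturation path joining them, close up a $C_k$, contradicting $C_k$-freeness. This forces the obstructions to cluster in at most $O(k)$ regions, each of order $O(k)$, giving the $O(k^2)$ bound and completing the derivation of the lower bound.

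The main obstacle will be the uniform treatment of odd and even $k$. For even $k$, the F\"uredi--Kim extremal construction has a bipartite-like skeleton whose length-$(k-1)$ paths can be tracked by parity, and the discharging rules close cleanly (this is essentially the route used for the even case in the present paper). For odd $k$, the path length $k-1$ is even, so length-$(k-1)$ walks are no longer parity-obstructed by odd cycles; the certificates become more varied, the relevant near-extremal graphs are harder to pin down, and the straightforward discharging rules lose their sharpness. Bridging this gap plausibly requires either replacing parity tracking by a second-moment estimate on the joint distribution of walk lengths between pairs of vertices in each $2$-connected block, or proving a stability theorem that classifies all $C_k$-saturated graphs with at most $\tfrac{k-3}{k-4}n + O(k^2)$ edges directly and then reading off the lower bound from that classification. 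This stability/classification step, independent of parity, is where I expect the essential new idea to be needed.
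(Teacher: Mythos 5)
There is a fundamental mismatch here: the statement you are proving is Conjecture \ref{conjfur}, which the paper does \emph{not} prove. It is quoted from F\"uredi and Kim, and the paper only partially resolves it via Theorem \ref{mainEC}: for each \emph{fixed even} $k\geqslant 28$ one has $\mathrm{sat}(n,C_k)=\frac{k-3}{k-4}n+O(1)$, where the implied constant is allowed to depend on $k$ — and in the paper's argument it does so badly (Lemma \ref{M2k} bounds the exceptional set $M$ using $|V(G[M])|<c^{\ell}|S|$ with $c=6\ell-5$ together with the Bondy--Simonovits bound, so the constant is exponential in $k$, nowhere near the uniform $O(k^2)$ the conjecture demands). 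So there is no "paper proof" to match; the conjecture, in particular for odd $k$ and for the $O(k^2)$ uniformity, remains open.

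Your proposal does not close this gap; it is a program with its essential steps missing. Concretely: (a) the discharging rules and the "certificates" for light vertices are never specified, and this is precisely where all the work lies — in the paper's even-$k$ argument this corresponds to the entire tree-weighting machinery (the functions $\mathrm{wt}$, $\beta$, $\gamma$ and Lemmas \ref{lCv}--\ref{akh2}), which is delicate even with $k$ fixed and even; (b) the claim that two far-apart obstructions would "close up a $C_k$" via a further saturation path is unjustified: saturation only supplies a path of length exactly $k-1$ between two \emph{nonadjacent} vertices, and concatenating such a path with internal paths of two obstructions does not produce a cycle of length exactly $k$ without precise control of those internal lengths — this is exactly the kind of control the paper has to fight for in Lemmas \ref{closedfener}--\ref{book} and \ref{pkgeneral}; (c) even granting the even case, your scheme gives no mechanism for making the error term $O(k^2)$ rather than merely $O_k(1)$, since the natural bound on the exceptional/clustered regions again comes from Tur\'an-type estimates on balls of radius about $k/2$, which cost $c^{\Theta(k)}$; and (d) you explicitly concede that the odd case needs a new idea (stability or otherwise), i.e.\ that step is absent rather than sketched. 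As it stands, the proposal establishes neither the conjecture nor the weaker Theorem \ref{mainEC}.
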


\noindent In    Section \ref{sec} of this paper, we  partially resolve Conjecture    \ref{conjfur} by    proving   the following  theorem.

\begin{theorem}\label{mainEC}
For each fixed  even     integer   $k\geqslant28$,
$$\mathrm{sat}(n, C_k)= \frac{k-3}{k-4}n +O(1).$$
\end{theorem}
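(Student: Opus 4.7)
The upper bound $\mathrm{sat}(n, C_k) \leq \frac{k-3}{k-4} n + \binom{k-4}{2}$ is exactly the right-hand estimate of the F\"uredi--Kim inequality~\eqref{fur}, so it remains to prove the matching lower bound $\mathrm{sat}(n, C_k) \geq \frac{k-3}{k-4} n - O(1)$. Fix an even integer $k \geq 28$ and let $G$ be a $C_k$-saturated graph on $n$ vertices with $m$ edges; the aim is to show $m \geq \frac{k-3}{k-4} n - O(1)$, where the implicit constant depends only on $k$. Since adding an edge between two components of $G$ would not create any cycle, saturation forces $G$ to be connected. Call a vertex a \emph{branch vertex} if its degree is at least $3$, and call a maximal path whose internal vertices all have degree $2$ a \emph{bare path}. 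Let $h$ be the number of branch vertices and let $B_1, \ldots, B_p$ denote the bare paths, with lengths $\ell_1, \ldots, \ell_p$.

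First, I would show that $G$ has at most a constant (in $k$) number of vertices of degree at most $1$. If $v$ is a pendant with unique neighbor $u$, then for every $w \notin \{u, v\}$ non-adjacent to $v$, saturation produces a $u$-$w$ path of length exactly $k - 2$ avoiding $v$, a stringent condition on the local structure around $u$; if too many pendants share the common neighbor $u$, one can exhibit two of them whose joining edge creates no $C_k$, contradicting saturation. A simpler argument rules out isolated vertices.

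The heart of the proof is the following local inequality at each branch vertex $v$ of degree $d_v$: denoting by $S_v$ the sum of the lengths of the bare paths incident to $v$, I claim that $S_v \leq (d_v - 2)(k - 3)$, with only a bounded total slack summed across all branch vertices. Suppose $S_v$ exceeds the bound at some $v$. A pigeonhole argument then furnishes two bare paths $B_s, B_t$ incident to $v$ and internal vertices $a \in V(B_s)$, $b \in V(B_t)$ with $ab \notin E(G)$ such that no $C_k$ appears in $G + ab$, contradicting saturation. To verify that no $C_k$ arises, one enumerates $a$-$b$ paths in $G$: the direct route through $v$ has length $d_{B_s}(v, a) + d_{B_t}(v, b)$; routes looping through the far endpoints of $B_s$ and $B_t$ have length determined by the external distance between those endpoints; and longer excursions are similarly constrained. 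Each case yields an arithmetic condition pinning the path length to a specific value, and with enough freedom in $(a, b)$ these conditions cannot all simultaneously produce length $k - 1$. The evenness of $k$ (with $k \geq 28$) is crucial for ruling out certain parity-based coincidences among path lengths; this is the principal obstacle of the argument and presumably accounts for why only the even case of Conjecture~\ref{conjfur} is addressed here.

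Summing the local inequality over all $h$ branch vertices, the left-hand side equals $2\sum_i \ell_i = 2m + O(1)$, since each bare path contributes its length at both of its endpoints; the right-hand side equals $(k - 3)\sum_v (d_v - 2) + O(1) = 2(k - 3)(p - h) + O(1)$, using $\sum d_v = 2p$ summed over branch vertices. The vertex-count identity $n = h + (m - p) + O(1)$, expressing $n$ as branch vertices plus internal bare-path vertices, yields $p = m - n + h + O(1)$. Substituting gives $2m \leq 2(k - 3)(m - n) + O(1)$, which rearranges to $m \geq \frac{k-3}{k-4} n - O(1)$, completing the lower bound and hence the proof.
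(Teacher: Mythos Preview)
Your approach is entirely different from the paper's, and it contains genuine gaps that I do not see how to close.

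First, the claim that $G$ has only $O(1)$ vertices of degree at most $1$ is not correct. Your argument (``if too many pendants share the common neighbor $u$, one can exhibit two of them whose joining edge creates no $C_k$'') only establishes that distinct pendants have distinct neighbours, which is exactly Lemma~\ref{neigborone}(i); it does nothing to prevent $\Theta(n)$ pendants attached to $\Theta(n)$ distinct vertices. The paper does \emph{not} bound the number of pendants; instead it introduces the function $\epsilon(v)$ and charges each pendant's weight deficit to its unique neighbour. With $\Theta(n)$ pendants, your double-counting identities $\sum_v S_v = 2m + O(1)$ and $\sum_v d_v = 2p + O(1)$ (sums over branch vertices) and $n = h + (m-p) + O(1)$ all fail, and the final arithmetic collapses.

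Second, and more seriously, the centrepiece inequality $S_v \leqslant (d_v-2)(k-3)$ ``with bounded total slack'' is asserted with only a vague plan (``a pigeonhole argument then furnishes two bare paths \ldots\ one enumerates $a$--$b$ paths''). No such clean per-vertex inequality appears anywhere in the paper, and the paper's machinery strongly suggests it is not available: the actual argument is a discharging scheme in which edge-weights are pushed along designated black paths to roots, and the structural Lemmas~\ref{beta0}--\ref{beta2} classify, case by case, the tree shapes $T_u$ with small overweight $\beta(T_u)\in\{0,\alpha,2\alpha\}$. Long bare paths of length $k-4$ and $k-5$ on degree-two vertices (the ``null-trees'' of the paper, and their mates) occur in abundance, and a degree-$3$ root incident to such a path already has $S_v \geqslant k-2$, violating your bound; the paper handles these not by a local inequality but by Lemmas~\ref{akh1}--\ref{akh2} together with the special families $\mathcal{G}$, $\mathcal{F}$, $\mathcal{N}$ and the auxiliary function $\gamma$. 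The role of evenness (via the Bondy--Simonovits bound in Lemma~\ref{M2k}) is also quite different from the ``parity-based coincidences'' you anticipate.

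In short, the proposal substitutes a single unproved local inequality for what is in fact a delicate global discharging argument; as written it is not a proof, and the first step about pendants is already false.
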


We refer to    \cite{fau} for a survey of  results on saturation numbers of   graphs. We just recall the following   conjecture that is made  by  Tuza    \cite{tuz}.

\begin{conjecture}[\cite{tuz}]\label{conj}
For every  graph  $F$,  there  exists   a constant $c_{_{\mathlarger{F}}}$  such
that  $\mathrm{sat}(n, F)=c_{_{\mathlarger{F}}} n+O(1)$.
\end{conjecture}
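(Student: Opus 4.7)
The upper bound in Theorem \ref{mainEC} is already contained in \eqref{fur}: for fixed $k$ the F\"uredi--Kim bound $\mathrm{sat}(n,C_k) < \tfrac{k-3}{k-4}n + \binom{k-4}{2}$ is of the form $\tfrac{k-3}{k-4}n + O(1)$. The entire content of Theorem \ref{mainEC} therefore reduces to establishing the matching lower bound for even $k \geq 28$. Let $G$ be a $C_k$-saturated graph on $n$ vertices and $e$ edges. I would recast the target as $(k-4)e - (k-3)n \geq -c_k$, which via the handshake identity is equivalent to $\sum_{v}w(v) \geq -c_k$ for the per-vertex weight $w(v) = \tfrac{k-4}{2}\deg(v) - (k-3)$. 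This weight equals $-(k-2)/2$ at leaves, $-1$ at degree-$2$ vertices, $(k-6)/2$ at degree-$3$ vertices, and strictly more at vertices of higher degree; so the negative contributions are concentrated in the ``thin'' part of $G$ (leaves and degree-$2$ vertices) and must be absorbed, up to bounded slack, by the surplus at ``branch'' vertices of degree at least $3$.

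The heart of the argument is a structural analysis of this thin part. Call a \emph{thread} any maximal path in $G$ whose interior vertices all have degree $2$. The first structural lemma to prove is that every thread has length at most some $L = L(k)$: for any non-adjacent pair on an overly long thread, the $C_k$-completing path of length $k-1$ guaranteed by saturation must enter and leave the thread only at its endpoints, and the remaining edges cannot simultaneously respect the degree-$2$ condition along the thread and avoid a forbidden $C_k$. Evenness of $k$ supplies a parity constraint on these completing paths that rules out several borderline cases. A second structural lemma uses saturation on pairs of leaves (and pairs of near-leaf vertices) to show that no branch vertex has two leaf neighbours, and more generally that the number of pendant threads attached at any branch vertex is bounded by a constant depending only on $k$.

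With these structural lemmas in hand, a discharging argument produces the lower bound. Each thread distributes its $\ell - 1$ units of degree-$2$ deficit (plus an extra $(k-2)/2$ if it ends at a leaf) to its two endpoints, and each branch vertex of degree $d$ absorbs the incoming deficit using its surplus $(d-2)(k-4)/2 - 1$. On average each branch vertex must absorb exactly its own surplus, which is the arithmetic reason for the coefficient $\tfrac{k-3}{k-4}$ and is realised tightly by any extremal construction. The structural bounds on $L$ and on pendant attachments, combined with the observation that connected components containing no branch vertex must be short cycles contributing $O(1)$ each, ensure that the per-vertex balance is nonnegative up to a bounded total defect, which yields the claimed $c_k$.

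The principal obstacle is the pair of structural lemmas controlling thread length and pendant attachment patterns at branch vertices. Proving them requires a genuinely combinatorial case analysis of short $C_k$-completing paths, and both hypotheses $k$ even and $k \geq 28$ enter at this step: evenness supplies parity arguments that eliminate awkward $C_k$-completing configurations, while $k \geq 28$ is the quantitative threshold ensuring that the surplus $(k-6)/2$ at a degree-$3$ branch vertex exceeds the maximum per-thread deficit it can be forced to receive after the structural reduction. Once the structural lemmas are proved, the discharging computation is essentially linear bookkeeping and delivers the required constant $c_k$.
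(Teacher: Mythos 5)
There is a fundamental mismatch between what you prove and what the statement asserts. The statement is Conjecture \ref{conj} (Tuza's conjecture): for \emph{every} graph $F$ there is a constant $c_{_{\mathlarger{F}}}$ with $\mathrm{sat}(n,F)=c_{_{\mathlarger{F}}}n+O(1)$. This is an open conjecture; the paper does not prove it and only \emph{verifies} it for two special families via Theorems \ref{mainCMG} and \ref{mainEC}. Your proposal is entirely a sketch of the lower bound in Theorem \ref{mainEC}, i.e.\ the case $F=C_k$ with $k$ even and $k\geqslant 28$. Even if that sketch were completed in full rigor, it would establish only a special case and would not prove the statement in question; no argument specific to cycles (threads, degree-two paths, $C_k$-completing paths) can address an arbitrary $F$, and indeed nothing in your write-up even attempts the general case or the complete multipartite case.

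Moreover, as a sketch of the even-cycle case your proposal has substantive gaps when compared with the paper's actual argument. First, the role of evenness is misidentified: the paper never uses a parity argument on $C_k$-completing paths; evenness enters through the Bondy--Simonovits bound that a $C_{2\ell}$-free graph on $n$ vertices has $O(n^{1+1/\ell})$ edges, which is what makes $|M|=O(1)$ in Lemma \ref{M2k} (for odd cycles this extremal bound fails badly, and the whole construction of the bounded exceptional set $M$ collapses). Second, the tight case of the problem is exactly threads of length $k-4$ on degree-two vertices: these do occur, their \emph{number} (not their length) must be shown to be bounded, and this again requires the global extremal-number argument via the family $\mathcal{P}$, the sets $D(x)$, and Lemma \ref{book}; a uniform thread-length bound $L(k)$ plus local discharging at the two endpoints of each thread, as you propose, cannot close the count, because the surplus available per branch vertex is only of order $k$ while the required per-vertex excess is the fine quantity $\alpha=1/(k-4)$, and the paper must distribute half-edge weights in increments of $\alpha$ along entire black paths and then classify all trees $T_u$ with small overweight $\beta(T_u)$ (Lemmas \ref{beta0}--\ref{gammalemma}, together with the delicate root analysis in Lemmas \ref{akh1}, \ref{akh2} and the $O(1)$ bound on $\mathcal{F}$). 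None of this bookkeeping is ``essentially linear'' in the sense you claim, and your two unproved structural lemmas do not substitute for it.
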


\noindent Obviously,  Theorems \ref{mainCMG} and \ref{mainEC}     verify  Conjecture \ref{conj} for   complete  multipartite graphs and   cycle graphs  whose   length are   even and large enough.

\section{Preliminaries}\label{prelim}

At first, let   us fix   some   standard notation  and terminology    of  graph  theory.
Let $G$ be a graph. The vertex set and the edge  set of  $G$ are   denoted by $V(G)$ and   $E(G)$, respectively.
For a subset     $W$ of $V(G)$,  write $G[W]$  for  the subgraph of $G$ induced by $W$ and    set       $$N_G(W)=\big\{v\in V(G) \, \big| \,   \text{$v$   is adjacent to all   vertices in $W$}\big\}.$$  For the sake of convenience, we write $N_G(v_1, \ldots, v_k)$ instead of $N_G(\{v_1, \ldots, v_k\})$ and we set    $N_G[v]=N_G(v)\cup\{v\}$.
The   {\it degree}  of a vertex $v$ of $G$  is defined    as $|N_G(v)|$ and is  denoted   by $ \deg_G(v)$.
The {\it distance} between  two vertices $ u$ and $v$ of $ G$,  denoted by $d_G(u, v)$,   is defined as the minimum  number of edges in   a   path  in   $G$   connecting $u$ and $v$.
For two subsets $ S$ and $ T$   of $V(G)$, the distance  between $S$ and  $T$ is   $\min\{ d_G(x, y) \, | \,  x\in S \text{ and }  y\in T\}$.
Also, we denote by $ E_{G}(S, T)$ the set of all edges
with endpoints in both $ S$ and $ T$.
Recall that a  {\it starlike}  tree  is a tree obtained from at least two
paths that are completely vertex-disjoint, except that they
all share exactly one  endpoint in common,  which  is called the {\it center} of the  starlike tree.

We  recall  an   upper bound on $\mathrm{sat}(n, F)$ which was  proved by K\'aszonyi  and   Tuza  in    \cite{kas}.

\begin{theorem}[\cite{kas}]\label{th:known}
Let $ F$ be a graph and $ S $ be an independent set in $ F$ with maximum possible size. Also, let
$ b = |V(F)|-|S|-1$ and
$ d= \min\{|N_F(x)\cap S| \,|\, x\in V(F)\setminus S\}$. Then,
\[
\mathrm{sat}(n, F)\leqslant \frac{2b +d-1}{2}n-\frac{b(b+d)}{2}.
\]
\end{theorem}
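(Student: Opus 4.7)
The plan is to construct an explicit $F$-saturated graph $G$ on $n$ vertices whose edge count matches the stated upper bound. I would first fix a vertex $w \in V(F) \setminus S$ realizing $|N_F(w) \cap S| = d$ and set $T = V(F) \setminus (S \cup \{w\})$, so that $|T| = b$. On the graph side, I would partition $V(G) = A \cup B$ disjointly with $|A| = b$ and $|B| = n - b$, make $A$ a clique, join every vertex of $A$ to every vertex of $B$, and place a $(d-1)$-regular graph $H$ on $B$ (with a one-edge adjustment if $(d-1)(n-b)$ is odd). A convenient model for $H$ is the disjoint union of cliques $K_d$, which makes the verifications below transparent.

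A direct count gives
\[
|E(G)| \;=\; \binom{b}{2} + b(n-b) + \frac{(d-1)(n-b)}{2} \;=\; \frac{2b+d-1}{2}\,n - \frac{b(b+d)}{2},
\]
which is exactly the claimed bound.

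To show $G$ is $F$-free, I would assume $\phi : F \to G$ is an embedding. Since $|V(F) \setminus S| = b+1 > |A|$, some $x \in V(F) \setminus S$ satisfies $\phi(x) \in B$, and the $d$ vertices of $N_F(x) \cap S$ then embed to an independent subset of $N_G(\phi(x)) = A \cup N_H(\phi(x))$. At most one of these images can lie in $A$ (as $A$ is a clique and $\phi(S)$ is independent), and, with $H$ chosen as a disjoint union of $K_d$'s, at most one further image can lie in the clique $N_H(\phi(x))$; so at most two images fit in $N_G(\phi(x))$, contradicting $d \geqslant 3$. The small cases $d \in \{1, 2\}$ would be handled by minor modifications of $H$.

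To show $G$ is saturated, I would take any non-edge $uv$ of $G$ (necessarily with $u, v \in B$) and construct a copy of $F$ inside $G + uv$ by sending $w \mapsto u$ and $s_1 \mapsto v$ for some fixed $s_1 \in N_F(w) \cap S$, embedding the rest of $N_F(w) \cap S$ and of $T$ into appropriate vertices of $A \cup B$, and placing the remaining vertices of $S$ in unused vertices of $B$; the complete bipartite join between $A$ and $B$ supplies every required edge between $T$ and $S$ in the image. The principal obstacle is choosing $H$ so that both the $F$-free and saturation verifications go through uniformly across all parameter ranges; this delicate case analysis in $d$ (and in the parity of $(d-1)(n-b)$) is the technical heart of the K\'aszonyi-Tuza argument, and the $O(1)$ perturbations it may require are absorbed into the constant term $-\tfrac{b(b+d)}{2}$.
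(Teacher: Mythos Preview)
The paper does not prove this theorem at all: it is quoted from K\'aszonyi--Tuza \cite{kas} and used as a black box for the upper bounds in Theorems~\ref{mainCMG} and~\ref{mainEC}. There is therefore no argument in the paper to compare your sketch against.

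Regarding your sketch itself, the construction $G = K_b \vee H$ with $H$ a $(d-1)$-regular graph on $n-b$ vertices, and the edge count
\[
\binom{b}{2} + b(n-b) + \frac{(d-1)(n-b)}{2} = \frac{2b+d-1}{2}\,n - \frac{b(b+d)}{2},
\]
are indeed the backbone of the K\'aszonyi--Tuza argument. However, your $F$-freeness step contains a genuine gap. You write that the images $\phi(N_F(x)\cap S)$ form ``an independent subset of $N_G(\phi(x))$'' and then deduce that at most one such image lies in the clique $A$ and at most one in the clique $N_H(\phi(x))$ ``as $A$ is a clique and $\phi(S)$ is independent''. But $\phi$ is only a \emph{subgraph} embedding: it preserves edges, not non-edges, so $\phi(S)$ need not be independent in $G$. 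Nothing prevents several vertices of $S$ from being mapped into the clique $A\cup N_H(\phi(x))$. (Incidentally, when $H$ is a disjoint union of $K_d$'s, the set $A\cup N_H(\phi(x))$ is a single clique of size $b+d-1$, so even if your independence claim were valid you would get ``at most one'', not ``at most two''.) The actual K\'aszonyi--Tuza verification that $G$ is $F$-free is more delicate and, as you yourself note at the end, hinges on the precise choice of $H$; your proposed $H$ together with the independence-of-$\phi(S)$ reasoning does not establish it.
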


\section{Complete  multipartite graphs}\label{scmg}

In this section, we provide   a proof for Theorem  \ref{mainCMG}.
We  first set up  some notation   to be used  throughout the section.
We always assume that $r\geqslant2$ and  $1\leqslant s_1\leqslant\cdots\leqslant s_r$.
Set   $F=K_{s_1,s_2 ,\ldots, s_r}$ and    assume that  $H$ is an $F$-saturated graph on $n$ vertices.
Let   $s= s_1+\cdots+s_r$,
$$A=\Big\{ x\in V(H)   \,  \Big|  \,   \deg_H (x) \leqslant 2s-s_r - 4\Big\},$$  and
$$\mathcal{A}=\Big\{M\subseteq A  \,  \Big|  \, \big|N_H(u, v)\big|\leqslant s-s_r - 2   \text{ for any  two  distinct vertices } u, v\in M\Big\}.$$

\begin{lemma}\label{bigneib}
Let $a, b, c, g$ be positive integers with   $b=(c-1)g^{(c-1)g^c+1}+(c-1)g^c+2$ and $g=4^a$.    Also, let  $B$ be   an independent  set in $H$    such that  $|B|=b$ and $\deg_H(v)\leqslant a$ for every  vertex  $v\in B$.
Then,  there exists a subset   $C\subseteq B$ such that $|C|=c$ and $|N_H(C)|\geqslant s-s_r-1$. Moreover, $|N_H(C)|>|N_H(B)|$   if  $a=s-3$.
\end{lemma}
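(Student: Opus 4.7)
The plan is to use an iterative pigeonhole argument on neighborhood patterns in $B$, combined with the $F$-saturation property of $H$ to force the common neighborhood of the selected $c$ vertices to be large.

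Each vertex $v \in B$ satisfies $|N_H(v)| \leq a$, so neighborhoods sit inside small sets. The factor $g = 4^a$ arises naturally as the count of ``types'' when each vertex's neighborhood is classified simultaneously by two small reference pieces (each giving $2^a$ subset possibilities). I would build a descending chain $B = B_0 \supseteq B_1 \supseteq \cdots$ together with chosen vertices $v_1, v_2, \ldots$. At stage $i{+}1$, pick any $v_{i+1} \in B_i$, partition $B_i \setminus \{v_{i+1}\}$ by type with respect to the current reference sets (which include $N_H(v_1,\ldots,v_{i+1})$), and retain the largest class as $B_{i+1}$, losing a factor of at most $g$ per stage. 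The towering formula $b = (c-1) g^{(c-1)g^c+1} + (c-1)g^c + 2$ is calibrated to absorb $(c-1)g^c + 1$ pigeonhole refinements while leaving a surplus pool of order $c-1$ at the end, enough to complete the selection and to reserve a vertex for the swap used in the \emph{moreover} clause.

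With $C = \{v_1,\ldots,v_c\} \subseteq B$ in hand, I would invoke $F$-saturation to bound $|N_H(C)|$. For any $v_i, v_j \in C$ the pair is nonadjacent (since $B$ is independent), so adding $v_iv_j$ creates a copy of $F$ in which $v_i, v_j$ occupy two distinct parts; the remaining parts contribute at least $s - s_{r-1} - s_r$ forced common neighbors in $H$. The iterative refinement is what ensures these pairwise common-neighbor structures for different pairs concentrate on a shared large subset rather than being spread out, allowing the cumulative bound to be pushed up to $|N_H(C)| \geq s - s_r - 1$.

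For the \emph{moreover} clause when $a = s-3$: one automatically has $|N_H(B)| \leq a = s-3$, but this does not directly beat $s - s_r - 1$ when $s_r \geq 3$, so strict inequality requires extra care. If the initial $C$ happens to satisfy $N_H(C) = N_H(B)$, I would swap one chosen vertex with a surplus vertex $v \in B_{c-1}$ of the same pigeonhole type; by construction, the reference sets tracked during the refinement force the new common neighborhood to acquire at least one vertex outside $N_H(B)$, giving $|N_H(C)| > |N_H(B)|$.

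The main obstacle I anticipate is synchronizing the pigeonhole refinements with $F$-saturation, so that the accumulated pairwise structure condenses into a single common neighborhood of the claimed size, and in the $a = s-3$ case, so that the swap simultaneously preserves the size bound and strictly extends $N_H(B)$. The very specific form of $b$ encodes exactly the slack required to make all the refinements, the final selection, and the swap fit together.
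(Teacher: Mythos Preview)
Your proposal has a genuine gap at the heart of the argument. The pairwise $F$-saturation step you describe gives, for each nonadjacent pair $v_i,v_j\in C$, only $|S_{v_iv_j}|=s-s_i-s_j$ common neighbours (the vertices in the parts of the forced copy not containing $v_i$ or $v_j$), and in the worst case this is merely $s-s_{r-1}-s_r$. That falls short of $s-s_r-1$ whenever $s_{r-1}\geqslant 2$, and nothing in an iterative one-sided refinement forces the extra $s_i-1$ vertices you are missing to enter the common neighbourhood of \emph{all} of $C$. The paper closes this gap with an idea your sketch does not contain: it works with \emph{two} sets $B_1'\subseteq B_1$ and $B_2'\subseteq B_2$ (the specific value $b=|B_1|+|B_2|$ is a sum, not a tower, precisely because $B$ is split in two at the outset), pigeonholes over \emph{cross} pairs so that the ``own-part remainders'' $T^x$ are constant as $y$ ranges over $B_2'$ and vice versa, and then invokes the $F$-\emph{freeness} of $H$ (not only saturation) to show that $\bigcup_{x\in B_1'}T^x$ cannot grow beyond size $s_i-1$, else one could assemble a copy of $F$ already inside $H$. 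That collapse is what puts $T^x$ itself, of size $s_i-1$, into $N_H(B_1')$ and lifts the bound to $s-s_j-1\geqslant s-s_r-1$.

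Your reading of the constant $b$ as ``$(c-1)g^c+1$ successive refinements each costing a factor $g$'' is therefore off: the formula encodes a two-stage pigeonhole with $|B_1|=(c-1)g^c+1$ and $|B_2|=(c-1)g^{|B_1|}+1$, not an iterated chain. Finally, for the \emph{moreover} clause the paper does not swap vertices; once the two-sided structure is in place and $a=s-3$, one simply observes that if $\widehat{T}\subseteq N_H(B)$ then any $y\in B_2'$ would satisfy $\deg_H(y)\geqslant|\widehat{T}|+|T_y|+|S|=s-2>a$, a contradiction, so $\widehat{T}$ contributes a vertex outside $N_H(B)$ and $|N_H(B_1')|>|N_H(B)|$ follows directly.
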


\begin{proof}
Let  $B_1$ be an arbitrary subset of $B$ with  $|B_1|=(c-1)g^c+1$ and set   $B_2=B\setminus B_1$.
For two distinct vertices    $x, y\in B$,   if we join     $x$ and  $y$ with  an edge and call the resulting graph by $H+xy$, then at least one  copy of $F$  can be found   in $H+xy$.
Fix   such a   copy $F_{xy}$ of $F$ in $H+xy$.  Let  $T_{x}^{y}$ and $T_{y}^{x}$ denote the   subsets of $V(F_{xy})$ such that $T_{x}^{y}\cup\{x\}   $ and $ T_{y}^{x}  \cup\{y\}$ are two  parts of  $F_{xy}$.  Set $S_{xy}=V(F_{xy})\setminus ( T_{x}^{y} \cup T_{y}^{x}  \cup\{x,y\}  )$.

For  a fixed  vertex $x\in B$, the number of choices for pairs  $(T_{y}^{x}, S_{xy})$ is at most $g$, since  $T_{y}^{x}\cup S_{xy}\subseteq N_H(x)$ and $\deg_H (x) \leqslant a$.
Therefore, the number of choices for pairs $(T_{y}^{x},S_{xy})$,  for all vertices  $x, y \in B_1$,  is at most $g^{(c-1)g^c+1}$. As  $|B_2|=(c-1)g^{(c-1)g^c+1}+1$,   by  the pigeonhole principle,    there exists a subset $B_2' \subseteq B_2$ of size $c$  such that
\begin{align}\label{Equ1}
T_{u}^{x}= T_{v}^{x} \quad \text{ and } \quad  S_{xu}=S_{xv}
\end{align}
for  all     vertices   $x\in B_1$ and    $u,v\in B_2'$.
Similarly,
the number of choices for pairs $(T_{x}^{y},S_{xy})$,  for all vertices  $x\in B_1$ and $y\in B_2'$,  is at most $g^{c}$. As
$|B_1|=(c-1)g^c+1$,  by  the pigeonhole principle,   there exists  a subset $B_1' \subseteq B_1$ of  size $c$  such that
\begin{align}\label{Equ2}
T_{u}^{y}= T_{v}^{y} \quad \text{ and } \quad S_{yu}=S_{yv}.
\end{align}
for  all     vertices   $y\in B_2'$ and $u,v\in B_1'$.
From   \eqref{Equ1} and \eqref{Equ2}, we deduce the following  statements:  (i) For any  vertices  $x\in B_1'$ and     $y\in B_2'$, all   subsets $S_{xy}$ are the same, say  $S$; (ii)   For a fixed vertex  $x\in B_1'$,  all subsets
$T_{w}^{x}$ for   vertices  $w\in B_2'$ are the same, say $T^x$;
(iii)   For a fixed vertex  $y\in B_2'$,  all subsets
$T_{w}^{y}$ for   vertices  $w\in B_1'$ are the same, say $T_y$.
Note that,  for  all    vertices  $x\in B_1'$ and $y\in B_2'$,  we have  $T^x\subseteq N_H(x)$ and    $T_y\subseteq N_H(y)$. Moreover, for  all      vertices  $x\in B_1'$ and $y\in B_2'$,
every vertex in  $T^x$ is adjacent to  every  vertex in   $T_y\cup S$  and
every vertex in  $T_y$ is adjacent to  every  vertex in   $T^x\cup S$.

For two fixed    vertices  $x_0\in B_1'$ and $y_0\in B_2'$,  there are    two distinct  indices    $ i,  j\in\{1, \ldots, r\}$  such  that
$|  T^{x_0}_{y_0} \cup \{y_0\}  |=s_i$ and
$|  T_{x_0}^{y_0} \cup \{x_0\}|=s_j$.
According to the discussion mentioned above,    $|T^{x}|=s_i-1$ and $|T_{y} |=s_j-1$ for all    vertices  $x\in B_1'$ and $y\in B_2'$, and moreover,  $|S|=s-s_i-s_j$.
Let $\widehat{T}=\bigcup_{x\in B_1'} T^x$ and $\widecheck{T}=\bigcup_{y\in B_2'} T_y$.
If $|\widehat{T}|\geqslant s_i$ and $|\widecheck{T}|\geqslant s_j$, then
one may find  a copy of $F$ in $H$ having a   part of size $s_i$   in  $\widehat{T}$,   a    part of size $s_j$ in  $\widecheck{T}$,  and  a    part of size $s_k$ in    $S$ for all indices  $k\in\{1, \ldots, r\}\setminus\{i, j\}$.
However, this  contradicts the $F$-freeness of $H$.  So, without loss of generality, we may assume that $|\widehat{T}|= s_i-1$ which means that $\widehat{T}=T^x$ for every  vertex $x\in B_1'$.
We now have $|N_H(B_1')|\geqslant |\widehat{T}|+|S|= (s_i-1)+(s-s_i-s_j)\geqslant s-s_r-1$. As $B_1'\subseteq B$ and $|B_1'|=c$, we are done.

For the `moreover' statement,  let  $a=s-3$.
If $\widehat{T}\subseteq N_H(B)$, then,  for  every  vertex $y\in B_2'$, $\deg_H(y)\geqslant |\widehat{T}|+|T_y|+|S|=  (s_i-1)+(s_j-1)+(s-s_i-s_j)=a+1$, a contradiction. So,  $\widehat{T}\not\subseteq N_H(B)$ which yields that $|N_H(B_1')|>|N_H(B)|$, as required.
\end{proof}

\begin{lemma}\label{MO1}
For   every  set  $M\in\mathcal{A}$,   $|M|=O(1)$.
\end{lemma}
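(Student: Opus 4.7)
The plan is to apply Lemma \ref{bigneib} with $c=2$ to a suitably chosen independent subset of $M$. The definition of $\mathcal{A}$ forbids two vertices of $M$ from having common neighborhood of size $\geqslant s-s_r-1$, while Lemma \ref{bigneib} produces exactly such a pair once we feed it an independent set large enough in terms of the fixed parameters $s_1,\ldots,s_r$.

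More precisely, I would first note that every vertex of $M\subseteq A$ has degree in $H$ at most $2s-s_r-4$, so in particular the maximum degree of the induced subgraph $H[M]$ is at most $2s-s_r-4$. A standard greedy argument (repeatedly picking a vertex and deleting its closed neighborhood) then yields an independent set $B\subseteq M$ of size at least $|M|/(2s-s_r-3)$. Every vertex of $B$ still has degree at most $a:=2s-s_r-4$ in $H$, so $B$ meets the hypotheses of Lemma \ref{bigneib}.

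Next, fix the parameters $a=2s-s_r-4$, $c=2$, $g=4^a$, and let $b=(c-1)g^{(c-1)g^c+1}+(c-1)g^c+2$ be the corresponding threshold from Lemma \ref{bigneib}. Since $s_1,\ldots,s_r$ are fixed, all of $a,g,b$ are constants. Suppose, toward a contradiction, that $|M|\geqslant (2s-s_r-3)b$; then $|B|\geqslant b$, and Lemma \ref{bigneib} produces a subset $C\subseteq B$ with $|C|=2$ and $|N_H(C)|\geqslant s-s_r-1$. But $C$ consists of two distinct vertices of $M$, so the defining property of $\mathcal{A}$ forces $|N_H(C)|\leqslant s-s_r-2$, a contradiction. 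Therefore $|M|<(2s-s_r-3)b=O(1)$.

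There is no substantial obstacle here: the parameters of Lemma \ref{bigneib} are tailored so that the choice $c=2$ immediately produces the forbidden common-neighborhood configuration, and the only mild step is extracting the independent subset $B$ from $M$, which is handled by the bounded-degree greedy argument. The only thing to double-check is that the constants $a$, $g$, $b$ genuinely depend only on the fixed quantities $s_1,\ldots,s_r$, which is clear from their definitions.
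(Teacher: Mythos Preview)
Your proof is correct and essentially identical to the paper's own argument: both extract an independent subset of $M$ via the greedy bound $|M|/(2s-s_r-3)$, then apply Lemma~\ref{bigneib} with $a=2s-s_r-4$, $c=2$, and $g=4^a$ to produce a pair violating the defining inequality of~$\mathcal{A}$. The only cosmetic difference is that the paper explicitly disposes of the degenerate case $2s-s_r-4<0$ (where $A=\varnothing$) and passes to a subset of size exactly $b$ before invoking Lemma~\ref{bigneib}.
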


\begin{proof}
We may assume that $2s-s_r-4\geqslant 0$.
By employing a greedy algorithm,   we get   an independent set $M' \subseteq M$ so  that $|M'|\geqslant |M|/(2s-s_r-3)$. Letting   $g=4^{2s-s_r-4}$, suppose by way of contradiction that
$|M'|>  g^{g^{2}+1}+g^2+2$.
By applying  Lemma \ref{bigneib} for $a=2s-s_r-4$,  an arbitrary subset of $M'$ of size $b=g^{g^{2}+1}+g^2+2$, and $c=2$,   we find  two distinct vertices $u,v\in M'$  so  that $|N_H(u,v)|\geqslant s-s_r-1$ which is a   contradiction to the definition of $M$.
Hence,    \begin{equation*}|M|\leqslant (2s-s_r - 3)|M'|\leqslant (2s-s_r - 3)(g^{g^{2}+1}+g^2+2).\qedhere\end{equation*}
\end{proof}

The following Lemma was proved in \cite[Lemma 7]{pikh} using a Ramsey argument.  For the sake of self-contentedness, we present here a proof using Lemma \ref{bigneib}.

\begin{lemma}\label{CO1}
Let  $D=\{v\in V(H) \, | \,    \deg_H(v)\leqslant s-3\}$. Then,  $|D|=O(1)$.
\end{lemma}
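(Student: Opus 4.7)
The plan is to iterate Lemma \ref{bigneib} with $a=s-3$, exploiting the ``moreover'' conclusion so that each application strictly enlarges the common neighborhood. After enough rounds the common neighborhood must exceed $s-3$, which is impossible for a set of vertices each of degree at most $s-3$.

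First I would pass from $D$ to a large independent subset. Every vertex of $D$ has degree at most $s-3$, so $H[D]$ has maximum degree at most $s-3$; a standard greedy argument therefore yields an independent set $D'\subseteq D$ with $|D'|\geqslant |D|/(s-2)$. It suffices to bound $|D'|$ by a constant depending only on $s$.

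Next I would set up a finite recursion of thresholds tailored to Lemma \ref{bigneib}. Put $g=4^{s-3}$, $c_{s-2}=2$, and for $i=s-3, s-4, \ldots, 0$ define
$$c_i=(c_{i+1}-1)\,g^{(c_{i+1}-1)g^{c_{i+1}}+1}+(c_{i+1}-1)\,g^{c_{i+1}}+2,$$
so $c_0$ is a finite constant depending only on $s$. Assume for contradiction that $|D'|\geqslant c_0$, and choose an arbitrary $B_0\subseteq D'$ of size $c_0$. Applying Lemma \ref{bigneib} with $a=s-3$, $b=c_i$, and $c=c_{i+1}$ to the independent set $B_i$ produces a subset $B_{i+1}\subseteq B_i$ with $|B_{i+1}|=c_{i+1}$ and, by the ``moreover'' clause (which is available precisely because $a=s-3$), with $|N_H(B_{i+1})|>|N_H(B_i)|$. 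Since $B_{i+1}\subseteq B_0\subseteq D'$, every $B_{i+1}$ is independent and consists of vertices of $H$-degree at most $s-3$, so the hypotheses of Lemma \ref{bigneib} persist throughout the iteration.

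After $s-2$ consecutive applications the strict inequalities combine to $|N_H(B_{s-2})|\geqslant s-2$. But every vertex $v\in B_{s-2}\subseteq D$ satisfies $\deg_H(v)\leqslant s-3$, so $|N_H(B_{s-2})|\leqslant s-3$, a contradiction. Hence $|D'|<c_0$ and $|D|\leqslant (s-2)\,c_0=O(1)$. The only delicate point, and the one I would verify most carefully, is that the hypotheses of Lemma \ref{bigneib} (independence of $B_i$ and the uniform degree bound $a=s-3$) survive every step, so that the ``moreover'' clause can indeed be invoked $s-2$ times in succession; the rest is a clean finite iteration.
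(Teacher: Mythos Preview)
Your proof is correct and follows essentially the same route as the paper's: pass to an independent subset of $D$ by a greedy argument, then iterate Lemma \ref{bigneib} with $a=s-3$ so that the ``moreover'' clause forces the common neighborhood to strictly increase at each step until it exceeds $s-3$, yielding a contradiction. The only cosmetic difference is that the paper iterates all the way down to a singleton set (taking $\phi^{(s-2)}(1)$ with terminal size $1$ rather than your $c_{s-2}=2$), but the resulting contradiction is identical.
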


\begin{proof}
We may assume that $s\geqslant 3$. Using a greedy algorithm,  we find an independent set $D' \subseteq D$ such that $|D'|\geqslant |D|/(s-2)$. Letting   $g=4^{s-3}$,
define the function  $\phi(k)=(k-1)g^{(k-1)g^k+1}+(k-1)g^k+2$. Also, define   $\phi^{(0)}(k)=k$    and $\phi^{(i)}(k)=\phi(\phi^{(i-1)}(k))$ for all  integers $i\geqslant1$.

Toward  a  contradiction, suppose  that   $|D'|>\phi^{(s-2)}(1)$. Consider an arbitrary subset $D_0\subseteq D'$ with $|D_0|=\phi^{(s-2)}(1)$.  By applying $s-2$ times     Lemma \ref{bigneib}  gives us    subsets  $D_0\supseteq D_1\supseteq  \cdots \supseteq D_{s-2}$
such that $|D_i|=\phi^{(s-i-2)}(1)$ and  $|N_H(D_{i})|>|N_H(D_{i-1})|$ for $i=1, \ldots, s-2$.
We have  $|D_{s-2}|=1$  and $|N_H(D_{s-2})|\geqslant s-2$. This means that    $D$ contains a vertex  of
degree at least $s-2$,  a contradiction.
Thus,  $|D|\leqslant (s-2)|D'|\leqslant (s-2)\phi^{(s-2)}(1)$.
\end{proof}

We are now in a position to prove our  main result of this section. Recall Theorem \ref{mainCMG}.

\begin{reptheorem}{mainCMG}
For any   integers   $r\geqslant2$ and    $1\leqslant s_1\leqslant\cdots\leqslant s_r$,
$$\mathrm{sat}(n, K_{s_1, \ldots, s_r})=\left(s_1+\cdots+s_{r-1}+\frac{s_r-3}{2}\right)n+O(1).$$		
\end{reptheorem}

\begin{proof}
By   Theorem \ref{th:known}, $$\mathrm{sat}(n, F)\leqslant \frac{2s-s_r-3}{2}n-\frac{(s-1)(s-s_r-1)}{2}.$$ Hence, it remains to prove  that
$$\mathrm{sat}(n, F)\geqslant \frac{2s-s_r-3}{2}n+O(1).$$
For this,  we should   show that $|E(H)|\geqslant \frac{2s-s_r-3}{2}n+O(1)$.
If $A=\varnothing$, then  $|E(H)|\geqslant \frac{2s-s_r-3}{2}n$ and we  are done. So, we assume that $A\neq\varnothing$.
Let  $M$ be a maximal element of $\mathcal{A}$  and set $N=\bigcup_{v\in M}N_H(v)$. Also, let
\begin{align*}
X&= \big\{ v\in V(H)\setminus N \, \big| \,   \deg_H(v) \geqslant 2s-s_r - 3\big\},\\
Y&= \big\{ v\in V(H)\setminus N \, \big| \,   s-2 \leqslant\deg_H(v) \leqslant 2s-s_r - 4\big\},\\
Z&= \big\{ v\in V(H)\setminus N \, \big| \,   \deg_H(v) \leqslant s-3\big\}.
\end{align*}
By the  maximality of $M$, each vertex in $Y$ is adjacent to   at least $s- s_r - 1$ vertices  in $N$.
Also, it follows from  Lemmas   \ref{MO1}  and  \ref{CO1}  that $|N|=O(1)$ and  $|Z|=O(1)$, implying that   $|X|+|Y|=n-|M\cup N|-|Z|=n+O(1)$.
So, we may write
\begin{align*}
|E(H)|
&\geqslant  E_H\big(X, V(H)\big)   + E_H\big(Y, V(H)\big) \\
&\geqslant  \sum_{v \in X} \frac{\deg_H(v)}{2} + \left((s- s_r - 1)|Y|+ \sum_{v \in Y} \frac{\deg_H(v)-(s-s_r -1)}{2}\right)\\
&\geqslant    \frac{2s-s_r -3}{2}|X| +   (s- s_r - 1)|Y|+ \frac{s_r -1}{2}|Y| \\
&=\frac{2s-s_r -3}{2}\big(|X|+|Y|\big) \\
&=\frac{2s-s_r -3}{2}n+O(1).
\qedhere\end{align*}
\end{proof}

\section{Even cycles}\label{sec}

In this section, we prove  Theorem  \ref{mainEC}.   For   clarity, we divide this section into two subsections.

\subsection{General properties}

In this subsection, we state and prove  auxiliary   lemmas    about the structures  of   $C_{k}$-saturated graphs.  The first one  is already known in the literature. To be self-contained, we include its proof here.

Before we proceed,  note that  a graph  is $C_{k}$-saturated if and only if  each    pair of nonadjacent vertices of the  graph   are connected by a  path of    length  $k-1$.
In particular, every   $C_{k}$-saturated  graph is a    connected graph with   diameter at most $k-1$.

\begin{lemma}{\rm (\cite{bcepst})}\label{neigborone}
Let  $k\geqslant5$ and  let  $G$ be a $C_{k}$-saturated graph with  at least $3$ vertices. Also, let $x, x'$ be two distinct  degree-one  vertices of $G$ with   $N_G(x)=\{y\}$ and   $N_G(x')=\{y'\}$ for some   vertices  $y, y'\in V(G)$. Then, the following   statements hold.
\begin{itemize}[noitemsep,  topsep=0pt]
\item[\rm{(i)}] If $v\in N_G[y]\setminus\{x\}$, then $\deg_G(v)\geqslant 3$. In particular, $y\neq y'$.
\item[\rm{(ii)}]  If  $y$  is adjacent to $y'$, then $\deg_G(y)\geqslant 4$.
\end{itemize}
\end{lemma}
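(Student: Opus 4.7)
The plan is to argue both parts by contradiction, using two standard consequences of $C_k$-saturation: every non-adjacent pair of vertices is joined by a simple path of length exactly $k-1$, and $G$ itself contains no $C_k$.

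For part (i), let $v \in N_G[y] \setminus \{x\}$ with $\deg_G(v) \leq 2$. I split on $v = y$ versus $v \neq y$. If $v = y$, then $\deg_G(y) = 1$ would make $\{x,y\}$ an isolated edge (impossible since $G$ is connected with at least three vertices), while $\deg_G(y) = 2$ with $N_G(y) = \{x,w\}$ forces the length-$(k-1)$ path from the non-adjacent pair $(x,w)$ to be just $x, y, w$, giving $k = 3$. If instead $v \in N_G(y) \setminus \{x\}$ has degree one, the same squeeze on the $(x,v)$ path again yields $k = 3$. The crucial case is $\deg_G(v) = 2$ with $N_G(v) = \{y, w\}$: the saturation-forced path $x, y, p_2, \ldots, p_{k-2}, w$ from $(x, w)$ cannot contain $v$, since any internal occurrence would pin both path-neighbours of $v$ into $\{y, w\}$ and collapse $k$ to at most $4$; then the cycle $v, y, p_2, \ldots, p_{k-2}, w, v$ is a genuine $C_k$ inside $G$, contradicting $C_k$-freeness. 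The ``in particular'' conclusion is immediate, since $y = y'$ would place $x'$ into $N_G(y) \setminus \{x\}$ with degree one.

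For part (ii), assume $y \sim y'$ and, for contradiction, $\deg_G(y) = 3$ with $N_G(y) = \{x, y', z\}$ (the lower bound $\deg_G(y) \geq 3$ is supplied by part (i)). The pair $(x', z)$ is non-adjacent, because $z \neq y'$ follows from $\deg_G(y) = 3$, so saturation provides a length-$(k-1)$ path $Q = x', y', q_2, \ldots, q_{k-2}, z$ in $G$. A case analysis analogous to the one in part (i) shows that for $k \geq 5$ neither $x$ nor $y$ can appear among $q_2, \ldots, q_{k-2}$: an internal occurrence of $x$ would force two of its neighbours to be $y$, and an internal occurrence of $y$ would force two of its neighbours to lie in $\{y', z\} = N_G(y) \setminus \{x\}$, which since $y' = q_1$ and $z = q_{k-1}$ collapses to $k \leq 4$. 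Hence $y', q_2, \ldots, q_{k-2}, z$ is a simple path of length $k - 2$ in $G$ avoiding $\{x, y, x'\}$, and closing it through the edges $yy'$ and $yz$ yields the simple cycle $y, y', q_2, \ldots, q_{k-2}, z, y$ of length exactly $k$ inside $G$—the desired contradiction.

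The main obstacle I expect is the careful bookkeeping that rules out $y$ from the internal vertices of $Q$ in part (ii): this is the step that genuinely uses both $\deg_G(y) = 3$ and the location of $y'$ and $z$ as the endpoints of $Q$'s interior, and once it is settled the forbidden $C_k$ falls out of the construction immediately.
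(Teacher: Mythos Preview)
Your proof is correct and follows essentially the same approach as the paper's: both parts are argued by contradiction, producing in each case a $y$--$w$ (respectively $y'$--$z$) path of length $k-2$ that avoids the degree-constrained vertex, and then closing it into a forbidden $C_k$. The paper is terser—e.g.\ it dispatches ``$P$ does not pass through $y$'' in one line from $k\geqslant5$ and $\deg_G(y)=3$—while you spell out the bookkeeping, but the logic is identical.
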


\begin{proof}
Note that  $G$ is  a connected   graph with  at least $3$ vertices.

\hspace{-3.7mm} (i)   Let $v\in N_G[y]\setminus\{x\}$. If $\deg_G(v)=1$, then $v\neq y$ and    there is only one path between $x$ and $v$ whose length is $2$, a contradiction.  Suppose  toward a contradiction that   $\deg_G(v)= 2$. If $v=y$, then       $N_G(v) = \{x, w\}$ for some vertex $w$,  and     there is only one path between $x$ and $w$ whose length is $2$, a contradiction.  Therefore, $v\neq y $ and   $N_G(v) = \{y, w\}$ for some vertex $w$. Since $x$ and $w$ are connected by a  path of    length  $k-1$,   there is a path $P$  between $y$ and $w$ of   length   $k-2$. As  $k\geqslant5$ and $\deg_G(v)= 2$, $P$ does not pass through $v$. This means that the cycle  $vyPwv$  has   length $k$,   a contradiction.

\hspace{-3.7mm} (ii)   We know from Part   (i) that   $\deg_G(y)\geqslant 3$.  Working  toward  a contradiction, suppose  that  $\deg_G(y)=3$ and   $N_G(y)=\{x, y',w\}$ for some vertex $w$.   Since $x'$ and $w$ are connected by a  path of    length  $k-1$,   there is a path $P$  between $y'$ and $w$ of    length   $k-2$. As  $k\geqslant5$ and $\deg_G(y)= 3$,
$P$ does not pass through $y$. This means that the cycle  $ywPy'y$   has   length $k$,   a contradiction.
\end{proof}

\begin{lemma}\label{closedfener}
Let $k\geqslant5$ and  let $G$ be a $C_{k}$-saturated graph. Also, let $P=u_0u_1\ldots u_\ell$ be a path on degree-two vertices of $G$ with   $\ell\geqslant2$.
Then,  $u_0$ and $u_\ell$ have no common neighbor in  $V(G)\setminus V(P)$.
\end{lemma}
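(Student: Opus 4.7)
My plan is to assume, toward a contradiction, that $u_0$ and $u_\ell$ share a common neighbor $w\in V(G)\setminus V(P)$. Since every vertex of $P$ has degree exactly $2$, this assumption pins down the neighborhoods: $N_G(u_0)=\{u_1,w\}$, $N_G(u_\ell)=\{u_{\ell-1},w\}$, and $N_G(u_i)=\{u_{i-1},u_{i+1}\}$ for all $1\leqslant i\leqslant\ell-1$. In particular, $w$ is the only vertex outside $V(P)$ that has any neighbor in $V(P)$, so $w$ is the unique ``gateway'' between $V(P)$ and the rest of $G$.

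The heart of the argument is a classification of paths. I would prove that for each $2\leqslant j\leqslant\ell$, the only simple paths in $G$ from $u_0$ to $u_j$ are the direct subpath $u_0u_1\cdots u_j$ of length $j$ and the ``around'' path $u_0\,w\,u_\ell u_{\ell-1}\cdots u_j$ of length $\ell-j+2$. To establish this, I trace a path starting at $u_0$. Its first edge is either $u_0u_1$ or $u_0w$. In the former case, the degree-$2$ constraint at each successive interior vertex leaves a unique choice for the next edge, forcing the path to march along $P$ until it reaches $u_j$; any attempt to overshoot $u_j$ would eventually reach the gateway $w$ and be unable to return to $V(P)$, since both $u_0$ and $w$ are then used. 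In the latter case, after the edge $u_0w$ one must re-enter $V(P)$; since $w$ is the unique gateway and is now consumed, this re-entry must be the immediate edge $wu_\ell$, after which the degree-$2$ constraint again forces a march back along $P$ to $u_j$.

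With this classification I invoke the $C_k$-saturation hypothesis, which guarantees that every pair of non-adjacent vertices is joined by a path of length exactly $k-1$. Applied to the non-adjacent pair $(u_0,u_2)$, the classification yields $k-1\in\{2,\ell\}$; since $k\geqslant5$, this forces $\ell=k-1$. If $\ell\geqslant3$, the same reasoning applied to $(u_0,u_3)$ yields $k-1\in\{3,\ell-1\}$, forcing $\ell=k$ and contradicting $\ell=k-1$. If instead $\ell=2$, then $\ell=k-1$ gives $k=3$, again contradicting $k\geqslant5$. Either way, we obtain the desired contradiction.

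I expect the delicate step to be the path classification, and in particular the verification that no detour through $V(G)\setminus(V(P)\cup\{w\})$ is possible: one must carefully argue that once the unique gateway $w$ (together with the starting endpoint $u_0$) has been used, the path cannot re-enter $V(P)$, because doing so would require a second edge from $V(G)\setminus V(P)$ to $V(P)$, and no such edge exists.
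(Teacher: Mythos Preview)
Your proof is correct and follows essentially the same approach as the paper: classify the simple paths between two vertices of $P$ (exploiting that $w$ is the unique gateway), then use $C_k$-saturation on two non-adjacent pairs to force contradictory length constraints. The paper uses the pairs $(u_0,u_\ell)$ and $(u_1,u_\ell)$ rather than your $(u_0,u_2)$ and $(u_0,u_3)$, and asserts the path classification without the detailed justification you give, but the argument is the same.
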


\begin{proof}
Since  $G$ is a connected  graph which    is not a cycle, $u_0u_\ell\not\in E(G)$.
Toward  a contradiction, suppose  that  $u_0$ and $u_\ell$ have a common neighbor outside $P$. There are exactly two  paths between  $u_0$  and  $u_\ell$  whose  lengths are  $2$ and $\ell$. As   $k\geqslant5$ and $G$ is  $C_{k}$-saturated, we derive  that    $\ell=k-1$. So,  there are exactly two  paths between  $u_1$ and  $u_\ell$  whose  lengths are  $3$ and $k-2$, a  contradiction.
\end{proof}

\begin{lemma}\label{pkgeneral}
Let $k\geqslant5$ and  let $G$ be a $C_{k}$-saturated graph.
Also, let  $u_0u_1\ldots u_{r}$ and  $v_0v_1\ldots v_{s}$ be two  vertex-disjoint paths of $G$ on degree-two vertices      and let   $r+s\geqslant k-1$.
If $u_0$ and $v_0$ have a common neighbor, say $w$,  then  there exists  a path between  $w$ and   $u_r$  of length $k-r-3$  which does not pass  through $u_0$.
\end{lemma}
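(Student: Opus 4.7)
The plan is to invoke $C_k$-saturation on the non-edge $u_r v_{r+1}$ and extract the sought $w$-to-$u_r$ path from the structure of the resulting length-$(k-1)$ path. The main technical tool is what I will call the \emph{degree-two chain principle}: whenever an internal degree-two vertex of either chain lies on a simple path in $G$, its two path-neighbors must coincide with its two graph-neighbors, so the path is forced to traverse maximal stretches of the chain consecutively.

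I would first verify that $u_r v_{r+1}\not\in E(G)$: since $v_{r+1}$ has degree two in $G$, its only neighbors are the chain-neighbors $v_r$ and $v_{r+2}$, neither of which is $u_r$ (I assume $s\geqslant r+1$, the main case permitted by $r+s\geqslant k-1$). By $C_k$-saturation there is a path $Q=q_0q_1\cdots q_{k-1}$ in $G$ with $q_0=u_r$ and $q_{k-1}=v_{r+1}$. Applying the chain principle at $v_{r+1}$, the penultimate vertex $q_{k-2}$ is either $v_r$ or $v_{r+2}$. In the main case $q_{k-2}=v_r$, iteration yields $q_{k-1-i}=v_{r+1-i}$ for $i=0,1,\ldots,r+1$, so $q_{k-r-2}=v_0$; the chain principle applied to $v_0$ then forces $q_{k-r-3}=w$. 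Consequently the prefix $q_0q_1\cdots q_{k-r-3}$ is a $u_r$-to-$w$ path of length exactly $k-r-3$, which reversed is a candidate for the path sought by the lemma.

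I would then verify that this prefix avoids $u_0$. If $u_0$ lay on the prefix, its only two graph-neighbors $\{u_1,w\}$ would have to be its two path-neighbors; since $w$ appears uniquely, at the very end of the prefix, $u_0$ must occupy the penultimate position, with $u_1$ preceding it. Backward chain-induction along $u_{r-1},u_{r-2},\ldots$ then identifies the entire prefix with the natural chain $u_r,u_{r-1},\ldots,u_0,w$ of length $r+1$; equating $r+1=k-r-3$ forces the degenerate equality $k=2r+4$. Thus for all values of $(k,r)$ other than this single equality, the prefix automatically avoids $u_0$ and the lemma follows.

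The main obstacle is the complementary case where $q_{k-2}=v_{r+2}$ (so $Q$ escapes the $v$-chain through $v_s$ and its external neighbor $v^*$) together with the degenerate situation $k=2r+4$. For the complementary case I would perform the symmetric chain-induction on the $u$-side of $Q$, which either directly exhibits the desired path or reduces the problem to the main case. For the degenerate case, the hypothesis $r+s\geqslant k-1$ becomes $s\geqslant r+3$, opening up the alternative pair $(u_r,v_{r+2})$ for a parallel construction of length $k-r-4$ that can be adjusted by one edge; boundary situations $s\leqslant r$ or $s=r+1$ (where $v_{r+1}=v_s$ is an endpoint) are handled by the same chain analysis applied to the pair $(u_r,v_s)$.
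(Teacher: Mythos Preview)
Your choice of non-edge $u_rv_{r+1}$ leads to a genuine gap in the ``complementary case'' $q_{k-2}=v_{r+2}$. After chain-inducting on the $v$-side you reach $q_{k-s+r-1}=v^*$ (the non-chain neighbour of $v_s$); you then propose to chain-induct on the $u$-side. But the first step there already bifurcates: $q_1\in\{u_{r-1},u^*\}$, where $u^*$ is the non-chain neighbour of $u_r$. If $q_1=u_{r-1}$ one can indeed push through to $q_{r+1}=w$ and close a $C_k$ via $w v_0\ldots v_sv^*$. If however $q_1=u^*$, the chain principle gives nothing further: none of $u_0,\ldots,u_{r-1}$ lie on $Q$, and there is no reason for $w$ to lie on $Q$ at all (its degree is unconstrained), so you have neither the desired $w$--$u_r$ path nor a forced $C_k$. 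Your sentence ``either directly exhibits the desired path or reduces the problem to the main case'' does not cover this branch. The degenerate case $k=2r+4$ is similarly unresolved: switching to $(u_r,v_{r+2})$ produces a prefix of length $k-r-4$, and ``adjusted by one edge'' is not an argument.

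The paper sidesteps all of this by applying saturation to the pair $u_0v_1$ instead. The point is that $u_0$ has graph-neighbours exactly $\{u_1,w\}$ and $v_1$ has graph-neighbours exactly $\{v_0,v_2\}$, so the $(k-1)$-path $P$ from $u_0$ to $v_1$ is governed by a clean $2\times 2$ case split on its second and penultimate vertices. Three of the four cases give an immediate contradiction (length overrun, a $C_k$ through $v_0$, or a path of length $3<k-1$); the surviving case forces $P=u_0u_1\ldots u_rRwv_0v_1$, and the subpath $R$ from $u_r$ to $w$ automatically avoids $u_0$ because $u_0$ is already the initial vertex of the simple path $P$. No degenerate or complementary cases arise.
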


\begin{proof}
Let  $u_0$ and $v_0$ have a common neighbor, say $w$.
Since  $u_0v_1\not\in E(G)$ and $G$ is $C_{k}$-saturated, there should be  a path $P$  between  $u_0$ and   $v_1$  of  length $k-1$.
If  $u_1, v_2\in V(P)$, then   it follows from   $u_rv_s\not\in E(G)$ that  the  length of $P$ is at least
$r+(s-1)+2\geqslant k$, a contradiction.
If  $w, v_2\in V(P)$, then   $P=u_0wQv_sv_{s-1}\ldots v_1$ for some path  $Q$ of length $k-s-1$ which  means that   $wQv_sv_{s-1}\ldots v_0w$   is  a cycle of length $k$,
a contradiction.
Thus, in view of  $k\geqslant5$,  we must have    $u_1, v_0\in V(P)$. Then,  $P=u_0u_1\ldots u_rRwv_0v_1$ for some path  $R$ of length $k-r-3$ which does not pass  through $u_0$. So,   $R$ is  the desired path between  $w$ and   $u_r$.
\end{proof}

\begin{lemma}\label{book}
Let $k\geqslant7$ and  let $G$ be a $C_{k}$-saturated graph.
Also, let  $u_0u_1\ldots u_{k-4}$ and  $v_0v_1\ldots v_{k-4}$ be two  vertex-disjoint paths of $G$ on degree-two vertices.
Then,  $u_0$ and $v_0$ have no common neighbor.
\end{lemma}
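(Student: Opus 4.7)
The plan is to argue by contradiction: suppose $u_0$ and $v_0$ share a common neighbor $w$. The key idea is that the two long paths of degree-two vertices are so rigid under $C_k$-saturation that the mere existence of such a $w$ would force $w$ to be adjacent to the far endpoint $u_{k-4}$ as well, after which Lemma \ref{closedfener} gives an immediate contradiction.

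First, I would apply Lemma \ref{pkgeneral} to the two vertex-disjoint paths $u_0u_1\ldots u_{k-4}$ and $v_0v_1\ldots v_{k-4}$, taking $r=s=k-4$. The hypothesis $r+s\geqslant k-1$ holds precisely because $k\geqslant 7$. The conclusion provides a path of length $k-r-3=1$ between $w$ and $u_{k-4}$ that avoids $u_0$; a path of length $1$ is simply an edge, so $w u_{k-4} \in E(G)$.

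Next, I would verify that $w$ lies outside the path $P=u_0u_1\ldots u_{k-4}$. Since each $u_i$ has degree two, the other neighbor of $u_i$ (besides its path-neighbors) cannot be $v_0$ without violating the vertex-disjointness of the two paths; moreover $w = u_{k-4}$ would produce a loop at $u_{k-4}$ via the edge $w u_{k-4}$ just exhibited. Thus $w \notin V(P)$, and $w$ is a common neighbor of $u_0$ and $u_{k-4}$ lying in $V(G) \setminus V(P)$. Applying Lemma \ref{closedfener} to the path $P$ with $\ell = k-4 \geqslant 3$ then yields the desired contradiction.

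The main obstacle I foresee is not analytical but organizational: one must carefully check that $w$ genuinely lies off $P$ rather than coinciding with some $u_i$, and in doing so use degree-two information on both paths. The hypothesis $k\geqslant 7$ enters exactly twice, once to make $r+s \geqslant k-1$ in the invocation of Lemma \ref{pkgeneral}, and once to guarantee $\ell \geqslant 2$ in the invocation of Lemma \ref{closedfener}, so no further arithmetic is required.
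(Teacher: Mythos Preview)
Your proposal is correct and follows exactly the paper's approach: apply Lemma~\ref{pkgeneral} with $r=s=k-4$ to obtain a path of length $k-(k-4)-3=1$ from $w$ to $u_{k-4}$, and then invoke Lemma~\ref{closedfener} for the contradiction. The paper's proof is a one-line version of yours; your added verification that $w\notin V(P)$ is a welcome bit of care that the paper leaves implicit.
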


\begin{proof}
If  $u_0$ and $v_0$ have a common neighbor, say $w$,   then   Lemma \ref{pkgeneral} guarantees the existence of      a path between  $w$ and  $u_{k-4}$    of length   $k-(k-4)-3=1$  which contradicts Lemma \ref{closedfener}.
\end{proof}

\subsection{Proof of Theorem \ref{mainEC}}

In this subsection, we  provide  a proof for Theorem \ref{mainEC}.  We first set up  some notation   to be used throughout the subsection.
Fix  an   integer  $\ell\geqslant14$  and   put $k=2\ell$,    $c=6\ell-5$,  and $\alpha=1/(2\ell-4)$.
Also,    assume that    $G$ is a $C_{2\ell}$-saturated graph.  In view of   the upper bound given in \eqref{fur},   we should  prove  that $|E(G)|\geqslant (1+\alpha)|V(G)|+O(1)$.

Let the  vertex   $v\in V(G)$ have  the property  that each vertex of $G$ with  distance at most $\ell-2$ from  $v$ has degree less than $c$.
For such a vertex $v$, define  $D(v)=\{x\in V(G) \, | \, d_G(v, x)\leqslant \ell-1\}$.

Let $u_0u_1\ldots u_{2\ell-4}$ be a  path of length $2\ell-4$ on degree-two  vertices of $G$. For
the middle  vertex $u_{\ell-2}$, we have $D(u_{\ell-2})=\{u_0,u_1, \ldots, u_{2\ell-4},v,w\}$, where $v$ is  the   neighbor  of $u_0$  other than $u_1$ and $w$ is  the   neighbor  of $u_{2\ell-4}$  other than $u_{2\ell-5}$.
Further,  for any  two vertex-disjoint   paths  $u_0u_1\ldots u_{2\ell-4}$ and $v_0v_1\ldots v_{2\ell-4}$   of length   $2\ell-4$ on degree-two  vertices of $G$,    Lemma   \ref{book} implies  that      $D(u_{\ell-2})\cap D(v_{\ell-2})=\varnothing$.

Take  an arbitrary  maximal family   $\mathcal{P}$ of  mutually   vertex-disjoint  paths  of length $2\ell-4$ on degree-two  vertices of $G$ and call  $S_0$  the set of  all  middle  vertices of the   paths from   $\mathcal{P}$.   Extend $S_0$  to a   maximal subset $S$ of $V(G)$ in which  $D(x)\cap D(y)=\varnothing$   for any two distinct   vertices $x, y\in S$. Set $M=\bigcup_{x\in S} D(x)$.

\begin{lemma}\label{M2k}
$|M|=O(1)$.
\end{lemma}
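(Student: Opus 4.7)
The plan is to bound both $|D(x)|$ for every $x\in S$ and the cardinality $|S|$ itself by constants depending only on $\ell$; together these will give $|M|=O(1)$.

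First I would observe that $|D(x)|$ is bounded. By definition of $S$, every vertex within distance $\ell-2$ of $x$ has degree less than $c=6\ell-5$, so a layer-by-layer count of the breadth-first tree rooted at $x$ gives $|D(x)|\leqslant 1+(c-1)+(c-1)^2+\cdots+(c-1)^{\ell-1}$, which is a constant depending only on $\ell$. It therefore suffices to bound $|S|$.

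Next I would record the key structural fact that any two distinct vertices $x,y\in S$ lie at distance exactly $2\ell-1$ in $G$. Since $G$ is $C_{2\ell}$-saturated, every pair of non-adjacent vertices is joined by a path of length $2\ell-1$, so the diameter of $G$ is at most $2\ell-1$. On the other hand, disjointness of the closed balls $D(x),D(y)$ of radius $\ell-1$ forces $d_G(x,y)\geqslant 2\ell-1$, and $x,y$ cannot be adjacent (otherwise $y\in D(x)$). Hence $d_G(x,y)=2\ell-1$. Along any shortest $x$--$y$ path there is a single \emph{bridging edge} joining a vertex of $D(x)$ at distance $\ell-1$ from $x$ to a vertex of $D(y)$ at distance $\ell-1$ from $y$; moreover, distinct unordered pairs in $\binom{S}{2}$ produce distinct bridging edges, since a shared endpoint would lie in two disjoint balls.

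With these preparations, I would argue by contradiction, assuming $|S|$ exceeds a large threshold depending only on $\ell$. Because each rooted subgraph $(G[D(x)],x)$ has at most a constant number of vertices with interior degrees bounded by $c-1$, only finitely many isomorphism types arise. A pigeonhole step extracts a large sub-family of $S$ sharing a common rooted type, and a multi-colour Ramsey step on the ``type'' of the bridging edge (i.e., which boundary vertex of $D(x)$ and which of $D(y)$ it joins) yields a monochromatic sub-family $S''$ on which every pair of balls is linked by a bridging edge of identical shape. Inside $S''$ I would then combine one bridging edge with the long degree-two interior path of its two endpoint balls, and close the walk via a second bridging edge leading to a third carefully chosen ball, so as to produce a closed walk of length exactly $2\ell$ in $G$. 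A case analysis in the spirit of Lemmas \ref{closedfener}--\ref{book} upgrades this walk to an honest $C_{2\ell}$, contradicting the $C_{2\ell}$-freeness of $G$. This forces $|S|$ to be bounded, whence $|M|\leqslant|S|\cdot\max_{x\in S}|D(x)|=O(1)$.

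The hardest part is the last step, namely extracting a genuine copy of $C_{2\ell}$ from the Ramsey-regular configuration. Matching parities and endpoint positions requires a careful sub-case analysis depending on the bridging-edge type and on the shape of $(G[D(x)],x)$, and the earlier lemmas---especially Lemma \ref{book}, which forbids common neighbours of the endpoints of disjoint long degree-two paths---will be the main tools for actually closing the cycle and ruling out the bad configurations.
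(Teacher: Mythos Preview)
Your setup is sound: you correctly bound each $|D(x)|$ by a constant depending only on $\ell$, and you correctly observe that for distinct $x,y\in S$ there is an edge of $G$ joining $D(x)$ to $D(y)$ (your ``bridging edge''), with distinct unordered pairs yielding distinct edges by disjointness of the balls. But from this point you take a much harder route than necessary, and you do not complete it.

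The paper finishes in one line from precisely these observations. The bridging edges give $|E(G[M])|\geqslant\binom{|S|}{2}$, while $|V(G[M])|=|M|<c^{\ell}|S|$. Since $G[M]$ is $C_{2\ell}$-free, the Bondy--Simonovits theorem (any $C_{2\ell}$-free graph on $n$ vertices has at most $100\ell\,n^{1+1/\ell}$ edges) yields $\binom{|S|}{2}\leqslant 100\ell\,(c^{\ell}|S|)^{1+1/\ell}$, which forces $|S|=O(1)$ and hence $|M|=O(1)$. No Ramsey argument, no case analysis, no explicit cycle construction.

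Your proposed finish, by contrast, has a genuine gap. After the Ramsey reduction you assert that a suitable combination of bridging edges and interior paths produces a closed walk of length exactly $2\ell$, and that ``a case analysis in the spirit of Lemmas~\ref{closedfener}--\ref{book}'' upgrades it to a cycle. Neither step is carried out, and neither is routine. For $x\in S\setminus S_0$ the subgraph $G[D(x)]$ need not contain any long path on degree-two vertices, so Lemmas~\ref{closedfener}--\ref{book}, which are specifically about such paths, do not apply; even for $x\in S_0$, arranging the parities so that two bridging edges plus internal paths close up to length exactly $2\ell$ is far from obvious. In short, you have effectively re-derived the inequality $|E(G[M])|\geqslant\binom{|S|}{2}$ without noticing that the extremal bound for $C_{2\ell}$-free graphs then finishes the proof immediately.
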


\begin{proof}
As $G$ is   $C_{2\ell}$-saturated, there should be an edge between    $D(x)$ and  $D(y)$ for any  two distinct   vertices $x, y\in S$. This  implies that    $|E(G[M])|\geqslant{|S|\choose 2}$. On the other hand,     $|V(G[M])|< c^\ell|S|$. Recall  the fact  that any  $C_{2\ell}$-free graph on $n$ vertices has at most     $100\ell n^{1+1/\ell}$ edges    for all   integers $n$ sufficiently large \cite{bndy}. Since  $G[M]$ is  a  $C_{2\ell}$-free graph,   we get   that $|S|=O(1)$ and so $|M|=O(1)$.
\end{proof}

Let $A$ be the set of all degree-one vertices in   $V(G)\setminus M$  each of which is  adjacent to a vertex     in  $M$.  It follows  from    Lemma \ref{neigborone}(i)  that    $|A|\leqslant |M|$ and thus  $|A|=O(1)$    by Lemma  \ref{M2k}.
Let $B$ be the set of all vertices in  $V(G)\setminus M$  of degree at leat    $c$. We note that $M\cup B\not= \varnothing$.
Let $D_0$ be the set of all vertices in   $V(G)\setminus (M \cup   A\cup B)$ each of which has a neighbor      in   $M$.
For $i=1, 2, \ldots$,   denote   by $D_i$   the   set of all vertices in   $V(G)\setminus (M \cup   A\cup B\cup D_0)$    with distance $i$ from $B\cup D_0$.
Using  the maximality of $S$, $D_{\ell-1}=\varnothing$ and so the sets  $M,  A,  B,  D_0, D_1, \ldots, D_{\ell-2}$   partition    $V(G)$.
For any integer $i\in\{0, 1,   \ldots, \ell-2\}$   and  vertex $v\in D_i$, designate  an edge $e$ such that $e$ is incident with $v$ and the other endpoint of $e$ is in $B\cup D_{i-1}$,  by letting $D_{-1}=M$.
We color these designated edges black and all remaining edges  of $G$ gray.
For every gray  edge $e=xy$, we consider two gray half-edges $(x, e)$ and $(y, e)$ and we assign   edge-weight $1/2$ to each gray half-edge.
We call  each   vertex  in $B\cup D_0$ as  a  `root'  and  we     set $D=D_1\cup\cdots\cup D_{\ell-2}$.

Roughly speaking, our goal is to distribute the value $|E(G)|$ on  vertices in  $B\cup D_0\cup D$  such that each vertex in $B\cup D_0\cup D$   takes an edge-weight at least   $1+\alpha$.
This along  with Lemma \ref{M2k}    shows  that  $|E(G)|\geqslant (1+\alpha)|B\cup D_0\cup D|=(1+\alpha)|V(G)|+O(1)$, as desired.
For each  vertex in $D_0\cup D$, an edge-weight equal to $1$ comes from its designated  black edge. So,  we need   to add an extra   edge-weight equal to  $\alpha$  to each vertex in $D_0\cup D$.
For each vertex in $B$, we need to assign an edge-weight equal to  $1+\alpha$.
Since we have consumed all black edges, we will focus on   gray  edges hereafter.

For  convenience,  we  fix    some   notation   and terminology  below.
For any   subgraph  $H$   of $G$ and      vertex  $v\in V(H)$ with $\deg_H(v)=1$,
we say $v$ to be an  `$H$-pendant' vertex.    For each vertex $v\in V(G)$,  we define
$$\epsilon(v)=\left\{\begin{array}{ll}
0 & \mbox{ if } \deg_G(v)=1 \mbox{;} \\
1 & \mbox{ if } \deg_G(v)\geqslant2  \mbox{ and $v$ has no  $G$-pendant        neighbors;} \\
2 &  \mbox{ otherwise,}
\end{array}\right.$$
and  we   denote by    $g(v)$ the number of  gray half-edges  incident  with   $v$.

For every vertex $v\in D$, we introduce the following notation   and definitions.
Obviously, there exists a unique path from $v$ to  $D_1$ on black edges. We denote the set of vertices  of this path by $P_v$.
We have  $|P_v|\leqslant \ell-2$. We define the   tree $T_v$ as follows:
$V(T_v)$  is the set of all vertices  $x$ with  $v\in P_x$ and
$E(T_v)$  is the set of all black edges   both of whose endpoints are in  $V(T_v)$.
We define  $$g(T_v)=\sum_{x\in V(T_v)}g(x).$$ Clearly,
\begin{equation}\label{grelation}
g(T_v)=g(v)+\sum_{x\in N_{T_v}(v)} g(T_x).
\end{equation}
For each  gray half-edge incident with $v$, we distribute its edge-weight $1/2$ over the vertices in $P_v$ by  giving   the value $\alpha$ to each vertex of $P_v$ and  by  giving   what remains from $1/2$ to    $v$.
Note that this is possible, as  $1/2=(\ell-2)\alpha$.
The total edge-weight which $v$ gets from all  gray half-edges in this way is denoted by $\mathrm{wt}(v)$. If $v\in D_i$ for some integer $i\in\{1, \ldots, \ell-2\}$, then,        by the definition,
\begin{equation}\label{defwt}
\mathrm{wt}(v)=\big(g(T_v)+(\ell-2-i)g(v)\big)\alpha.
\end{equation}
Finally, we   define  the  overweight function $\beta$  as
\begin{equation}\label{defbeta}
\beta(T_v)=\sum_{x\in V(T_v)} \big(\mathrm{wt}(x)-\alpha\big).
\end{equation}
Notice that the values of   $\beta$  are all  integer multiples of $ \alpha$.

Below, we determine  all  trees   $T_v$ with $g(T_v)\in\{0, 1\}$.  Note that the converse  of each    statement  in the following Lemma  is   obviously  true.

\begin{lemma}\label{lCv}
Let $v\in D$. The following statements hold.
\begin{itemize}[noitemsep,  topsep=0pt]
\item[{\rm (i)}]  Let  $g(T_v)=0$. Then,    $v$ is a $G$-pendant   vertex.
\item[{\rm (ii)}]    Let  $g(T_v)=1$. Then,     $T_v$ is a path  and the unique  gray half-edge is incident with an endpoint of $T_v$. In addition,   the unique  gray half-edge is incident with $v$ if and only if either $V(T_v)=\{v\}$ or $V(T_v)=\{v, w\}$ for some  $G$-pendant   vertex   $w$.
\end{itemize}
\end{lemma}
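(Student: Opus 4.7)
My plan is to prove~(i) by a short contradiction argument that exploits a deepest leaf of $T_v$, and then to prove~(ii) by induction on $|V(T_v)|$, splitting into the cases $g(v)=1$ and $g(v)=0$ and invoking part~(i) together with both items of Lemma~\ref{neigborone}.

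\medskip

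\noindent\textbf{Sketch of (i).} Suppose $g(T_v)=0$ but $V(T_v)\supsetneq\{v\}$. Every vertex of $T_v$ has $g$-value~$0$, so all its incident edges in $G$ are black and its $G$-degree equals one (from its parent edge) plus the number of its children in $T_v$. Choose a leaf $y$ of $T_v$ at maximum distance from $v$; then $\deg_G(y)=1$ and $y\ne v$. Let $y'$ be the parent of $y$ in $T_v$. Applying Lemma~\ref{neigborone}(i) to the pendant $y$ with neighbour $y'$ shows that every vertex of $N_G[y']\setminus\{y\}$ has degree at least~$3$; in particular $y'$ has another child $y^*\in V(T_v)$ of degree at least~$3$, so $y^*$ has at least two children in $T_v$. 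These descendants sit strictly deeper than $y$, yielding a leaf of $T_v$ strictly below the level of $y$ and contradicting the choice of $y$. Hence $V(T_v)=\{v\}$ and $\deg_G(v)=1$.

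\medskip

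\noindent\textbf{Sketch of (ii).} Induct on $|V(T_v)|$, the base case $|V(T_v)|=1$ being immediate. For the inductive step, by~\eqref{grelation} either $g(v)=1$ and every child $u$ of $v$ satisfies $g(T_u)=0$, or $g(v)=0$ and there is a unique child $u_0$ of $v$ with $g(T_{u_0})=1$ while every other child $u$ satisfies $g(T_u)=0$. If $g(v)=1$, part~(i) makes every child of $v$ a $G$-pendant, and Lemma~\ref{neigborone}(i) forbids $v$ from having two pendant neighbours; hence $V(T_v)\in\{\{v\},\{v,w\}\}$ with $w$ pendant, and the unique gray half-edge lies at $v$, itself an endpoint of the path $T_v$. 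If $g(v)=0$, the induction hypothesis supplies a path structure for $T_{u_0}$ with the gray half-edge at an endpoint. I then show that $u_0$ is the only child of $v$: any pendant sibling $w$ of $u_0$ forces $\deg_G(u_0)\ge 3$ via Lemma~\ref{neigborone}(i), while a direct computation based on the induction hypothesis gives $\deg_G(u_0)\in\{2,3\}$, the value~$3$ occurring only when $V(T_{u_0})=\{u_0,w'\}$ with $w'$ pendant, a configuration excluded by Lemma~\ref{neigborone}(ii) applied to the pendants $w,w'$ whose respective neighbours $v,u_0$ are adjacent. With $u_0$ the unique child of $v$, $T_v$ is obtained from $T_{u_0}$ by prepending $v$, hence is again a path; the residual configuration in which $V(T_{u_0})=\{u_0,w'\}$ with $w'$ pendant and the gray half-edge at $u_0$ is excluded because $\deg_G(v)=2$ then violates Lemma~\ref{neigborone}(i) applied to $w'$. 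The ``iff'' addendum is read off from the surviving configurations: gray at $v$ corresponds exactly to the case $g(v)=1$ handled above.

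\medskip

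\noindent\textbf{Main obstacle.} The delicate step is the case analysis in~(ii) that keeps $T_v$ a path and anchors the gray half-edge at an endpoint. Lemma~\ref{neigborone}(i) alone seems insufficient: the two-pendant configuration in which $w$ hangs off $v$ and $w'$ hangs off the adjacent vertex $u_0$ is precisely what Lemma~\ref{neigborone}(ii) is designed for, and the contradiction $\deg_G(v)\ge 4$ versus the bookkeeping $\deg_G(v)=3$ provided by the induction hypothesis is what ultimately closes that sub-case.
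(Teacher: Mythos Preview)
Your proposal is correct and, for part (ii), follows essentially the same route as the paper: induction (you on $|V(T_v)|$, the paper backward on the level $i$), the same dichotomy $g(v)=1$ versus $g(v)=0$, the reduction of all side-branches to pendants via part (i), and the elimination of the two-pendant configuration via Lemma~\ref{neigborone}. The one place where you are actually cleaner than the paper is the delicate sub-case with pendants $w$ (off $v$) and $w'$ (off $u_0$): you invoke Lemma~\ref{neigborone}(ii) explicitly, which immediately gives $\deg_G(u_0)\geqslant 4$ against the inductive bookkeeping $\deg_G(u_0)=3$; the paper cites only Lemma~\ref{neigborone}(i) there and the contradiction is less transparent. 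One small slip in your ``Main obstacle'' paragraph: the quantity forced to be $3$ by the induction hypothesis is $\deg_G(u_0)$, not $\deg_G(v)$; either apply part (ii) of Lemma~\ref{neigborone} with $y=u_0$, or observe separately (via the ``$y\neq y'$'' clause of part (i)) that $v$ has at most one pendant child, hence $\deg_G(v)=3$, before using $y=v$.

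Your treatment of part (i) is genuinely different. The paper proves it by the same backward induction on $i$, reducing to a single pendant child and then contradicting Lemma~\ref{neigborone}(i). Your deepest-leaf argument is a direct, non-inductive alternative: pick a $T_v$-leaf $y$ at maximal depth, note $y$ is $G$-pendant, and use Lemma~\ref{neigborone}(i) at its parent $y'$ to manufacture a sibling $y^*$ of degree $\geqslant 3$, whose children produce a strictly deeper leaf. This is a nice self-contained variant that avoids setting up the induction and makes the role of Lemma~\ref{neigborone}(i) very visible; the paper's version, on the other hand, dovetails with the inductive framework already in place for (ii).
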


\begin{proof}
Let $v\in D_i$ for some integer $i\in\{1, \ldots, \ell-2\}$.

\hspace{-3.7mm} (i)   We use backward  induction on $i$.    If $i=\ell-2$, then there is  nothing to prove.
Let $i\leqslant \ell-3$ and suppose toward a contradiction that  there is a vertex  $w\in N_{T_v}(v)$. We find from \eqref{grelation} that  $g(T_w)=0$  and  so   the backward  induction hypothesis    implies that $w$ is  a   $G$-pendant   vertex. This  along with  Lemma \ref{neigborone}(i)  yields   that  $V(T_v)=\{v, w\}$, meaning  that $v$ is a degree-two vertex of $G$ which is adjacent to the  $G$-pendant   vertex   $w$, contradicting Lemma  \ref{neigborone}(i).
This shows that $ N_{T_v}(v)=\varnothing$ which means that $\deg_G(v)=1$.

\hspace{-3.7mm} (ii)    We use backward  induction on $i$.    If $i=\ell-2$, then there is  nothing to prove.
Let $i\leqslant \ell-3$.
First, assume that  $g(v)=1$.  We have from   \eqref{grelation} that      $g(T_x)=0$  for every vertex  $x\in N_{T_v}(v)$. So, it follows from   Part    (i)   that  each  vertex  in $N_{T_v}(v)$    is  a  $G$-pendant   vertex   and thus     Lemma \ref{neigborone}(i) yields  that  $|N_{T_v}(v)|\leqslant1$, we are done.
Next,  assume that $g(v)=0$.  We deduce  from   \eqref{grelation} that  there is a vertex $w\in N_{T_v}(v)$  with      $g(T_w)=1$ and      $g(T_x)=0$  for every vertex  $x\in N_{T_v}(v)\setminus \{w\}$.   By   Part    (i),    $\deg_G(x)=1$  for every vertex  $x\in N_{T_v}(v)\setminus \{w\}$ and
by  the backward  induction  hypothesis, either $\deg_G(w)=2$ or $\deg_G(w)=3$ and $w$ has  a   $G$-pendant   neighbor.   If $\deg_G(w)=2$, then    Lemma \ref{neigborone}(i)  implies that  $v$ is not adjacent to a  $G$-pendant   vertex    and hence  $\deg_G(v)=2$, we are done.
If $\deg_G(w)=3$ and  $w$ has  a   $G$-pendant   neighbor, then    Lemma \ref{neigborone}(i)  yields   that  $\deg_G(v)\geqslant3$ and   $v$      has  no    $G$-pendant   neighbors, a contradiction.
\end{proof}

The following lemma presents  a lower bound for edge-weights of vertices in $D$ as well as  an applicable expression for   $\beta$.
Part (ii) of the following lemma shows that we assigned   an extra   edge-weight equal to  $\alpha$  to every    vertex in $D\setminus\{v\in D_1 \, | \, \deg_G(v)=1\}$.
It also  demonstrates   that     $\beta(T_v)\geqslant0$   if  $\deg_G(v)\geqslant2$. Note that it follows from  \eqref{defwt} and  \eqref{defbeta} that  $\beta(T_v)=-\alpha$   if  $\deg_G(v)=1$.

\begin{lemma}\label{lCvSH}
Let $v\in D$. The following statements hold.
\begin{itemize}[noitemsep,  topsep=0pt]
\item[{\rm (i)}]   $\mathrm{wt}(v)\geqslant (\deg_G(v)-\epsilon(v))\epsilon(v)\alpha$.
\item[{\rm (ii)}] $\mathrm{wt}(v)-\epsilon(v)\alpha\geqslant0$ and, if $\deg_G(v)\geqslant2$, then
\begin{equation}\label{betaN}\beta(T_v)= \sum_{x\in V(T_v)} \big(\mathrm{wt}(x)-\epsilon(x)\alpha\big).\end{equation}
\end{itemize}
\end{lemma}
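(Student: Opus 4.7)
The plan is to verify (i) by a case analysis on $\epsilon(v)\in\{0,1,2\}$ and then to deduce (ii) from (i) together with a short bijective counting argument.

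Part (i) is trivial when $\epsilon(v)=0$. For $\epsilon(v)=1$, I would note that $v$ has no pendant neighbor, so every black child $w$ of $v$ in $T_v$ is non-pendant and Lemma \ref{lCv}(i) (in contrapositive form) gives $g(T_w)\geqslant 1$; applying \eqref{grelation} together with the count of $g(v)$ gray half-edges at $v$ then yields $g(T_v)\geqslant\deg_G(v)-1$, and hence $\mathrm{wt}(v)\geqslant(\deg_G(v)-1)\alpha$ via \eqref{defwt}.

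The main difficulty is the case $\epsilon(v)=2$. Here $v$ has a pendant neighbor $x_0$, and since $x_0$ cannot lie in $M\cup A\cup B\cup D_0$ (a short class-by-class check rules out each possibility using that the only neighbor of $x_0$ is $v\in D$), we must have $x_0\in D$, whence the unique edge of $x_0$ is its designated black edge, making $x_0$ a black child of $v$ at level $i+1$ (where $v\in D_i$); since $D_{\ell-1}=\varnothing$, this forces $i\leqslant\ell-3$. By Lemma \ref{neigborone}(i), $\deg_G(v)\geqslant 3$ and $x_0$ is the unique pendant neighbor of $v$. I would then show that each of the $\deg_G(v)-2$ edges at $v$ other than the parent edge and the edge to $x_0$ contributes at least $2\alpha$ to $\mathrm{wt}(v)$. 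A gray edge contributes $(\ell-1-i)\alpha\geqslant 2\alpha$ since $i\leqslant\ell-3$, while an edge to a non-pendant black child $w$ contributes $g(T_w)\alpha$, for which $g(T_w)\geqslant 2$ can be obtained by splitting on $\epsilon(w)$: if $\epsilon(w)=1$ then $\deg_G(w)\geqslant 3$ by Lemma \ref{neigborone}(i) and the $\epsilon=1$ bound already established gives $g(T_w)\geqslant\deg_G(w)-1\geqslant 2$; if $\epsilon(w)=2$, then Lemma \ref{neigborone}(ii) applied to the adjacent pair $v,w$ forces $\deg_G(w)\geqslant 4$, and the same type of counting (one pendant child plus at least $\deg_G(w)-2$ non-pendant children or gray neighbors) yields $g(T_w)\geqslant\deg_G(w)-2\geqslant 2$.

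For part (ii), the inequality $\mathrm{wt}(v)\geqslant\epsilon(v)\alpha$ is immediate from (i) after noting that $(\deg_G(v)-\epsilon(v))\epsilon(v)\geqslant\epsilon(v)$ in each of the three cases. For the identity \eqref{betaN}, comparison with \eqref{defbeta} reduces matters to showing $\sum_{x\in V(T_v)}\epsilon(x)=|V(T_v)|$ under $\deg_G(v)\geqslant 2$. I would establish this by constructing a bijection between the set $P$ of pendants of $T_v$ and the set $Q$ of vertices of $T_v$ with a pendant neighbor. For $x\in P$, the assumption $\deg_G(v)\geqslant 2$ forces $x\neq v$, so $x$ has a black parent $\pi(x)\in V(T_v)$, and $\pi(x)\in Q$ by Lemma \ref{neigborone}(i); conversely, for $y\in Q$, the unique pendant neighbor of $y$ (unique by Lemma \ref{neigborone}(i)) lies in $D$ and is a black child of $y$ by the same class analysis used for $x_0$, hence lies in $P\cap V(T_v)$. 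These two maps are mutually inverse, so $|P|=|Q|$ and the counting identity follows.
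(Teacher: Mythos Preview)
Your proof is correct and follows essentially the same approach as the paper: a case analysis on $\epsilon(v)$ for part (i), and for part (ii) the observation that pendant vertices and their parents compensate each other in the two sums. The only minor differences are that, in the case $\epsilon(v)=2$, you obtain $g(T_w)\geqslant 2$ for the non-pendant black children $w$ via a case split on $\epsilon(w)$ together with Lemma~\ref{neigborone}(ii) (whereas the paper invokes Lemma~\ref{lCv}(ii) to rule out $g(T_w)=1$), and in part (ii) you phrase the pendant/parent compensation as an explicit bijection $P\leftrightarrow Q$ rather than the paper's direct term-by-term comparison.
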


\begin{proof}  Let $v\in D_i$ for some integer $i\in\{1, \ldots, \ell-2\}$.

\hspace{-3.7mm} (i)   The assertion is trivial if $\epsilon(v)=0$.
If $\epsilon(v)=1$,  then we   find  from  (\ref{grelation}), \eqref{defwt},  and Lemma \ref{lCv} that
$\mathrm{wt}(v)\geqslant g(T_v)\alpha \geqslant(g(v)+\deg_{T_v}(v))\alpha=(\deg_G(v)-1)\alpha$.
Hence,  we may assume that  $\epsilon(v)=2$. Let  $w$ be the $G$-pendant neighbor of $v$.  For each  vertex $x\in N_{T_v}(v)\setminus \{w\}$,  $x$ is not a $G$-pendent vertex by Lemma \ref{neigborone}(i)  and so
$g(T_x)\geqslant 1$ by Lemma \ref{lCv}.
If $g(T_x)= 1$ for some  vertex $x\in N_{T_v}(v)\setminus \{w\}$,  then either $\deg_G(x)=2$ or $\deg_G(x)=3$ and $x$ has  a   $G$-pendant   neighbor by Lemma \ref{lCv}.  However, both cases are impossible due to Lemma \ref{neigborone}(i).
Therefore,   $g(T_x)\geqslant 2$ for each  vertex $x\in N_{T_v}(v)\setminus \{w\}$.
Moreover,    since  $v$ has a $G$-pendant neighbor,  $v\not\in D_{\ell-2}$ and so $i\leqslant\ell-3$.
Now, we   obtain   from  (\ref{grelation}) and  \eqref{defwt}  that
$\mathrm{wt}(v)\geqslant (g(T_v)+g(v))\alpha \geqslant2(g(v)+\deg_{T_v}(v)-1)\alpha=2(\deg_G(v)-2)\alpha$.

\hspace{-3.7mm} (ii)  Note first that, if  $\epsilon(v)=2$, then $\deg_G(v)\geqslant3$ by Lemma \ref{neigborone}(i). From  this and   Part (i), we get that  $\mathrm{wt}(v)-\epsilon(v)\alpha\geqslant(\deg_G(v)-\epsilon(v)-1)\epsilon(v)\alpha\geqslant0$.
Now, let      $\deg_G(v)\geqslant2$.
The contribution
of  each    vertex      $x\in V(T_v)$ with $\epsilon(x)=1$  to the   summation in  \eqref{betaN} is trivially  the  same as to   the   summation in   \eqref{defbeta}.
But,  the contribution
of  each   $G$-pendant vertex      $w\in V(T_v)$
to the   summation in   \eqref{betaN}  is $0$  while its contribution  to the   summation in  \eqref{defbeta}  is $-\alpha$.
It is  compensated by  the contribution of the  unique   neighbor of $w$, say $u$. Indeed, the  contribution of $u$  to
the   summation in   \eqref{betaN}  is $\mathrm{wt}(u)-2\alpha$   while its contribution  to the   summation in  \eqref{defbeta}  is $\mathrm{wt}(u)-\alpha$.
This demonstrates  that \eqref{betaN} holds,  since    distinct $G$-pendant vertices have distinct neighbors by Lemma \ref{neigborone}(i).
\end{proof}

In the following lemma, we determine  all  trees   $T_u$ with $u  \in D_1\cup D_2 \cup D_3 \cup  D_4$ and  $\beta(T_u)=0$.

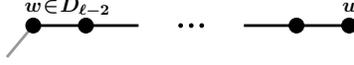
\begin{figure}
\centering
\scalebox{0.7}{\boldmath{
\begin{tikzpicture}
\begin{pgfonlayer}{nodelayer}
\draw [fill=black] (-4, 3.75) circle (4pt);
\draw [fill=black] (-5, 3.75) circle (4pt);
\draw [fill=black] (-9, 3.75) circle (4pt);
\draw [fill=black] (-10, 3.75) circle (4pt);
\draw [fill=black] (-7.2, 3.75) circle (1pt);
\draw [fill=black] (-7, 3.75) circle (1pt);
\draw [fill=black] (-6.8, 3.75) circle (1pt);
\node [style=none] (0) at (-4, 3.75) {};
\node [style=none] (1) at (-5, 3.75) {};
\node [style=none] (2) at (-6, 3.75) {};
\node [style=none] (3) at (-7, 3.75) {};
\node [style=none] (4) at (-8, 3.75) {};
\node [style=none] (5) at (-9, 3.75) {};
\node [style=none] (6) at (-10, 3.75) {};
\node [style=none] (7) at (-10.5, 3.15) {};
\node [style=none] (8) at (-9.35, 4.1) {$w \hspace{-1mm}\in  \hspace{-1mm} D_{\ell -2}$};
\node [style=none] (9) at (-4, 4.1) {$u$};
\end{pgfonlayer}
\begin{pgfonlayer}{edgelayer}
\draw [line width=1.5pt,color=gray!80,fill=gray!80] (7.center) to (6.center);
\draw [line width=1.5pt,color=black,fill=black] (6.center) to (5.center);
\draw [line width=1.5pt,color=black,fill=black]  (5.center) to (4.center);
\draw [line width=1.5pt,color=black,fill=black]  (2.center) to (1.center);
\draw [line width=1.5pt,color=black,fill=black]  (1.center) to (0.center);
\end{pgfonlayer}
\end{tikzpicture}}}
\caption{The tree  of Lemma \ref{beta0}.}\label{alpha0}
\end{figure}

\begin{lemma}\label{beta0}
Let  $u  \in D_1\cup D_2 \cup D_3 \cup  D_4$  and  $\beta(T_u)=0$. Then, $T_u$  is a path with a unique gray
half-edge which is incident with a $T_u$-pendant vertex belonging to $D_{\ell-2}$    as  depicted  in Figure \ref{alpha0}.
\end{lemma}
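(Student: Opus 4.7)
The plan is to use the hypothesis $\beta(T_u) = 0$ to rigidify the local structure at every vertex of $T_u$, and then apply Lemma~\ref{neigborone} to rule out pendant-neighbour configurations, thereby collapsing the tree to the desired path. First I would note that if $\deg_G(u) = 1$, the unique edge at $u$ is its designated black edge, so $V(T_u) = \{u\}$ with $\mathrm{wt}(u) = 0$, giving $\beta(T_u) = -\alpha$, contradicting the hypothesis; hence $\deg_G(u) \geqslant 2$, and Lemma~\ref{lCvSH}(ii) together with the non-negativity of each summand forces
\begin{equation*}
\mathrm{wt}(x) = \epsilon(x)\alpha \qquad \text{for every } x \in V(T_u).
\end{equation*}
I would also observe at this stage that any $G$-pendant neighbour $w$ of a vertex $x \in V(T_u)$ must in fact be a child of $x$ in $T_u$: since $x \in D_j$ with $j \geqslant 1$, $x$ has no neighbour in $M$, and the only way a pendant vertex can be attached to a vertex in $D$ is via its own designated black edge.

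The crucial step is to exclude $\epsilon(x) = 2$ for any $x \in V(T_u)$. Such an $x$ has $\deg_G(x) = 3$ by the equality case of Lemma~\ref{lCvSH}(i), and its pendant neighbour $w$ is a child of $x$ by the observation above. If $x \neq u$, its parent $y$ in $T_u$ is adjacent to $x$, so Lemma~\ref{neigborone}(i) applied to $w$ yields $\deg_G(y) \geqslant 3$; combined with $\mathrm{wt}(y) = \epsilon(y) \alpha$ and Lemma~\ref{lCvSH}(i), this rules out $\epsilon(y) = 1$, forcing $\epsilon(y) = 2$ and a pendant neighbour of $y$; then Lemma~\ref{neigborone}(ii) applied to $x \sim y$ yields $\deg_G(x) \geqslant 4$, a contradiction. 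If $x = u$, the identity $\deg_G(u) = 1 + \deg_{T_u}(u) + g(u) = 3$ leaves the possibilities $(\deg_{T_u}(u), g(u)) \in \{(1, 1), (2, 0)\}$: the first gives $V(T_u) = \{u, w\}$ and, through \eqref{defwt} with $\mathrm{wt}(u) = 2\alpha$, forces $i_u = \ell - 3$, which is impossible under $\ell \geqslant 14$ and $i_u \leqslant 4$; the second produces a non-pendant child $y$ of $u$ with $g(T_y) = 2$, which via $\mathrm{wt}(y) = \epsilon(y) \alpha$ forces $\epsilon(y) = 2$, and Lemma~\ref{neigborone}(ii) applied to $u \sim y$ delivers the same contradiction.

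Once $\epsilon = 2$ is excluded, no vertex of $V(T_u)$ has a pendant neighbour, so none is itself $G$-pendant, and hence $\epsilon(x) = 1$ with $\deg_G(x) = 2$ throughout. The identity $\deg_G(x) = 1 + (\text{number of children of } x \text{ in } T_u) + g(x)$ then shows each vertex has at most one child in $T_u$, so $T_u$ is a path $u = v_0, v_1, \ldots, v_m = w$; every internal vertex has $g(v_j) = 0$ while the leaf $w$ has $g(w) = 1$, giving a unique gray half-edge at $w$, and $\mathrm{wt}(w) = \alpha$ via \eqref{defwt} simplifies to $(\ell - 1 - i_w)\alpha = \alpha$, i.e.\ $i_w = \ell - 2$, matching Figure~\ref{alpha0}.

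The hard part is the middle paragraph: ruling out $\epsilon(x) = 2$ demands the careful interplay of both parts of Lemma~\ref{neigborone} with the equality analysis of Lemma~\ref{lCvSH}(i), and in particular a separate treatment of $x = u$ whose resolution leans on the numerical identity $i_u = \ell - 3$ that the hypothesis $u \in D_1 \cup D_2 \cup D_3 \cup D_4$ is designed to forbid when $\ell \geqslant 14$.
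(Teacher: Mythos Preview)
Your proof is correct and follows essentially the same approach as the paper: both derive $\mathrm{wt}(x)=\epsilon(x)\alpha$ for all $x\in V(T_u)$ from Lemma~\ref{lCvSH}(ii), use Lemma~\ref{lCvSH}(i) to bound degrees, eliminate the $\epsilon=2$ possibility via Lemma~\ref{neigborone} (reducing to the case $x=u$ and then obtaining the level contradiction $i_u=\ell-3$), and finish by reading off the path structure. The paper compresses your $x\neq u$ and $(\deg_{T_u}(u),g(u))=(2,0)$ subcases into the single assertion ``Lemma~\ref{neigborone}(i) forces $N_{T_u}(v)\setminus\{v'\}=\varnothing$'', while you are more explicit (and more careful in justifying that a $G$-pendant neighbour of $x\in V(T_u)$ is necessarily a child in $T_u$).
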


\begin{proof}
For  each   vertex $x\in V(T_u)$,     Lemma \ref{lCvSH}  yields  that   $\mathrm{wt}(x)-\epsilon(x)\alpha=0$ and $\epsilon(x)\alpha=\mathrm{wt}(x)\geqslant (\deg_G(x)-\epsilon(x))\epsilon(x)\alpha$ which implies  that $\deg_G(x)\leqslant \epsilon(x)+1$.  This means that,  for each    vertex $x\in V(T_u)$,   either $\deg_G(x)\in\{1, 2\}$ or   $\deg_G(x)=3$ and   $\epsilon(x)=2$.
By  contradiction, suppose that there is a vertex  $v\in V(T_u)$ with   $\deg_G(v)=3$ and   $\epsilon(v)=2$. Letting    $v'\in N_{T_u}(v)$ be $G$-pendant,   Lemma \ref{neigborone}(i)  forces that  $N_{T_u}(v)\setminus\{v'\}=\varnothing$. Thus,   $v=u$  and   $T_u$ is a path of length $1$   with a unique  gray half-edge which is  incident with $u$. However,   it follows from $\beta(T_u)=0$ that $u\in D_{\ell-3}$ which contradicts $\ell\geqslant14$.  This contradiction shows that     $\deg_G(x)\in\{1, 2\}$  for every    vertex $x\in V(T_u)$, meaning  that      $T_u$ is a path.   From $\beta(T_u)=0$, $T_u$ is not a single-vertex tree      and thus   $g(T_u)\geqslant1$ using      Lemma \ref{lCv}(i).  Therefore,    $T_u$ has a unique  gray half-edge which is    incident with a $T_u$-pendant vertex $w$ other than $u$.  As $\beta(T_u)=0$, we conclude that  $w\in D_{\ell-2}$.
\end{proof}

In the following lemma, we determine  all  trees   $T_u$ with $u\in   D_2\cup D_3$ and  $\beta(T_u)=\alpha$.

\begin{figure}
\centering
\scalebox{0.7}{\boldmath{
\begin{tikzpicture}
\begin{pgfonlayer}{nodelayer}
\draw [fill=black] (-4, 3.75) circle (4pt);
\draw [fill=black] (-5, 3.75) circle (4pt);
\draw [fill=black] (-9, 3.75) circle (4pt);
\draw [fill=black] (-10, 3.75) circle (4pt);
\draw [fill=black] (-7.2, 3.75) circle (1pt);
\draw [fill=black] (-7, 3.75) circle (1pt);
\draw [fill=black] (-6.8, 3.75) circle (1pt);
\node [style=none] (0) at (-4, 3.75) {$\bullet$};
\node [style=none] (1) at (-5, 3.75) {$\bullet$};
\node [style=none] (2) at (-6, 3.75) {};
\node [style=none] (3) at (-7, 3.75) {};
\node [style=none] (4) at (-8, 3.75) {};
\node [style=none] (5) at (-9, 3.75) {$\bullet$};
\node [style=none] (6) at (-10, 3.75) {$\bullet$};
\node [style=none] (7) at (-10.5, 3.25) {};
\node [style=none] (8) at (-9.35, 4.1) {$w \hspace{-1mm}\in  \hspace{-1mm} D_{\ell -3}$};
\node [style=none] (9) at (-4, 4.1) {$u$};
\node [style=none] (11) at (-7, 0.75) {(i)};
\draw [fill=black] (3.5, -3) circle (4pt);
\draw [fill=black] (4.5, -2) circle (4pt);
\draw [fill=black] (2.5, -2.75) circle (4pt);
\draw [fill=black] (-1.5, -1.75) circle (4pt);
\draw [fill=black] (-2.5, -1.5) circle (4pt);
\draw [fill=black] (-1.5, -4.25) circle (4pt);
\draw [fill=black] (2.5, -3.25) circle (4pt);
\draw [fill=black] (-2.5, -4.5) circle (4pt);
\draw [fill=black] (4.5, -3) circle (4pt);
\draw [fill=black] (0.5, -2.25) circle (1pt);
\draw [fill=black] (0.7, -2.3) circle (1pt);
\draw [fill=black] (0.3, -2.2) circle (1pt);
\draw [fill=black] (0.5, -3.75) circle (1pt);
\draw [fill=black] (0.7, -3.7) circle (1pt);
\draw [fill=black] (0.3, -3.8) circle (1pt);
\node [style=none] (40) at (3.5, -3) {$\bullet$};
\node [style=none] (41) at (4.5, -2) {$\bullet$};
\node [style=none] (42) at (0.5, -6) {(iii)};
\node [style=none] (43) at (2.5, -2.75) {$\bullet$};
\node [style=none] (44) at (1.5, -2.5) {};
\node [style=none] (46) at (-0.5, -2) {};
\node [style=none] (47) at (-1.5, -1.75) {$\bullet$};
\node [style=none] (48) at (-2.5, -1.5) {$\bullet$};
\node [style=none] (49) at (2.5, -3.25) {$\bullet$};
\node [style=none] (50) at (1.5, -3.5) {};
\node [style=none] (51) at (-3, -2) {};
\node [style=none] (53) at (-0.5, -4) {};
\node [style=none] (54) at (-1.5, -4.25) {$\bullet$};
\node [style=none] (55) at (-2.5, -4.5) {$\bullet$};
\node [style=none] (56) at (-3, -5) {};
\node [style=none] (57) at (-1.8, -1.15) {$w_{1}  \hspace{-1mm} \in \hspace{-1mm} D_{\ell -2}$};
\node [style=none] (58) at (-1.7, -4.8) {$w_{2}  \hspace{-1mm} \in \hspace{-1mm} D_{\ell -2}$};
\node [style=none] (60) at (3.5, -3) {};
\draw [fill=black] (11, 3.75) circle (4pt);
\draw [fill=black] (10, 4) circle (4pt);
\draw [fill=black] (6, 5) circle (4pt);
\draw [fill=black] (5, 5.25) circle (4pt);
\draw [fill=black] (10, 3.5) circle (4pt);
\draw [fill=black] (6, 2.5) circle (4pt);
\draw [fill=black] (5, 2.25) circle (4pt);
\draw [fill=black] (8, 4.5) circle (1pt);
\draw [fill=black] (8.2, 4.45) circle (1pt);
\draw [fill=black] (7.8, 4.55) circle (1pt);
\draw [fill=black] (8, 3) circle (1pt);
\draw [fill=black] (8.2, 3.05) circle (1pt);
\draw [fill=black] (7.8, 2.95) circle (1pt);
\node [style=none] (61) at (11, 3.75) {$\bullet$};
\node [style=none] (62) at (11, 4.1) {$u$};
\node [style=none] (63) at (8, 0.75) {(ii)};
\node [style=none] (64) at (10, 4) {$\bullet$};
\node [style=none] (65) at (9, 4.25) {};
\node [style=none] (67) at (7, 4.75) {};
\node [style=none] (68) at (6, 5) {$\bullet$};
\node [style=none] (69) at (5, 5.25) {$\bullet$};
\node [style=none] (70) at (10, 3.5) {$\bullet$};
\node [style=none] (71) at (9, 3.25) {};
\node [style=none] (72) at (4.5, 4.75) {};
\node [style=none] (74) at (7, 2.75) {};
\node [style=none] (75) at (6, 2.5) {$\bullet$};
\node [style=none] (76) at (5, 2.25) {$\bullet$};
\node [style=none] (77) at (4.5, 1.75) {};
\node [style=none] (78) at (5.7, 5.6) {$w_{1} \hspace{-1mm} \in \hspace{-1mm} D_{\ell -2}$};
\node [style=none] (79) at (5.8, 1.9) {$w_{2}\hspace{-1mm} \in \hspace{-1mm} D_{\ell -2}$};
\node [style=none] (81) at (11, 3.75) {};
\node [style=none] (82) at (4.5, -3) {$\bullet$};
\node [style=none] (85) at (4.5, -1.65) {$v$};
\node [style=none] (86) at (3.6, -3.3) {$w$};
\node [style=none] (88) at (4.5, -3.3) {$u$};
\end{pgfonlayer}
\begin{pgfonlayer}{edgelayer}
\draw [line width=1.5pt,color=gray!80,fill=gray!80] (7.center) to (6.center);
\draw [line width=1.5pt,color=black,fill=black] (6.center) to (5.center);
\draw [line width=1.5pt,color=black,fill=black]  (5.center) to (4.center);
\draw [line width=1.5pt,color=black,fill=black]  (2.center) to (1.center);
\draw [line width=1.5pt,color=black,fill=black]  (1.center) to (0.center);
\draw [line width=1.5pt,color=black,fill=black] (40.center) to (43.center);
\draw [line width=1.5pt,color=black,fill=black] (43.center) to (44.center);
\draw [line width=1.5pt,color=black,fill=black] (46.center) to (47.center);
\draw [line width=1.5pt,color=black,fill=black] (47.center) to (48.center);
\draw [line width=1.5pt,color=gray!80,fill=gray!80](48.center) to (51.center);
\draw [line width=1.5pt,color=black,fill=black](40.center) to (49.center);
\draw [line width=1.5pt,color=black,fill=black](49.center) to (50.center);
\draw [line width=1.5pt,color=black,fill=black](53.center) to (54.center);
\draw [line width=1.5pt,color=black,fill=black](54.center) to (55.center);
\draw [line width=1.5pt,color=gray!80,fill=gray!80](55.center) to (56.center);
\draw [line width=1.5pt,color=black,fill=black](61.center) to (64.center);
\draw [line width=1.5pt,color=black,fill=black](64.center) to (65.center);
\draw [line width=1.5pt,color=black,fill=black](67.center) to (68.center);
\draw [line width=1.5pt,color=black,fill=black](68.center) to (69.center);
\draw [line width=1.5pt,color=gray!80,fill=gray!80](69.center) to (72.center);
\draw [line width=1.5pt,color=black,fill=black](61.center) to (70.center);
\draw [line width=1.5pt,color=black,fill=black](70.center) to (71.center);
\draw [line width=1.5pt,color=black,fill=black](74.center) to (75.center);
\draw [line width=1.5pt,color=black,fill=black](75.center) to (76.center);
\draw [line width=1.5pt,color=gray!80,fill=gray!80](76.center) to (77.center);
\draw [line width=1.5pt,color=black,fill=black](60.center) to (82.center);
\draw [line width=1.5pt,color=black,fill=black](82.center) to (41.center);
\end{pgfonlayer}
\end{tikzpicture}}}
\caption{The trees  of Lemma \ref{beta1}.}\label{alpha}
\end{figure}
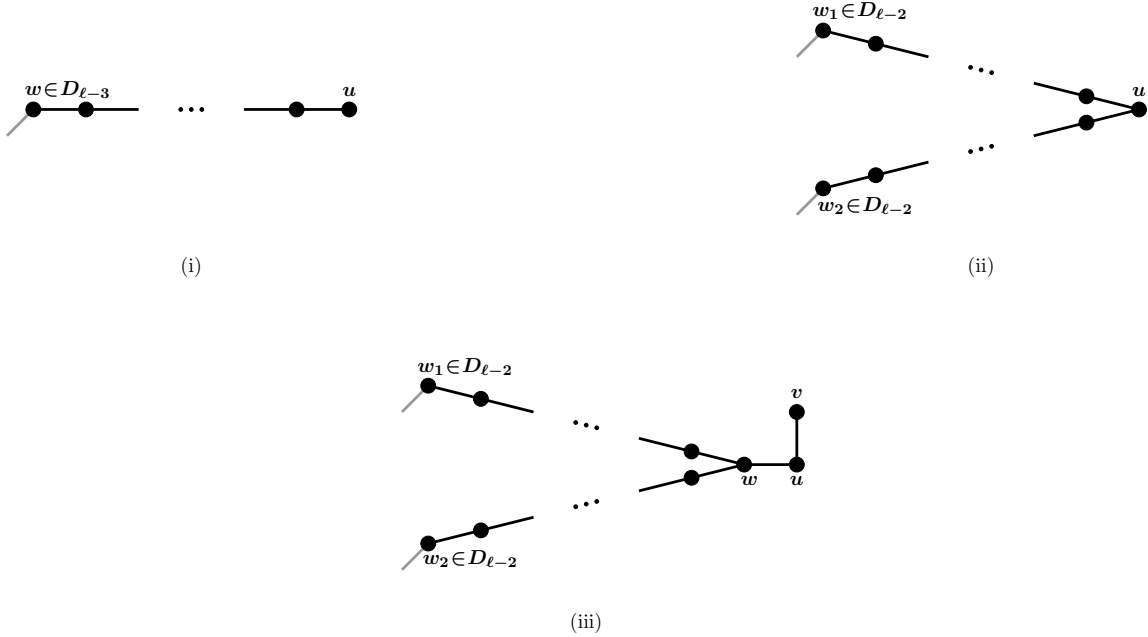

\begin{lemma}\label{beta1}
Let $u\in   D_2\cup D_3$   and  $\beta(T_u)=\alpha$.     Then,   $T_u$ is one  of the  three  trees illustrated  in Figure \ref{alpha}, where all gray half-edges incident with $T_u$ are also shown.
\end{lemma}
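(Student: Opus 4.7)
The plan is to follow the template set by the proof of Lemma \ref{beta0}. First, since $\deg_G(u)=1$ would yield $T_u=\{u\}$ and $\beta(T_u)=-\alpha$, we have $\deg_G(u)\geqslant 2$ and Lemma \ref{lCvSH}(ii) applies to give
$$\alpha = \beta(T_u) = \sum_{x\in V(T_u)}\bigl(\mathrm{wt}(x)-\epsilon(x)\alpha\bigr),$$
where every summand is a non-negative integer multiple of $\alpha$. Hence exactly one vertex $y^\ast\in V(T_u)$ carries excess $\alpha$, while each remaining vertex $x$ satisfies $\mathrm{wt}(x)=\epsilon(x)\alpha$ and, by Lemma \ref{lCvSH}(i), $\deg_G(x)\leqslant\epsilon(x)+1$. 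Thus each $x\neq y^\ast$ is either a $G$-pendant, or has $(\deg_G(x),\epsilon(x))=(2,1)$, or has $(\deg_G(x),\epsilon(x))=(3,2)$; in the last case the $G$-pendant neighbor of $x$ must be a child of $x$ in $T_u$, since vertices of $T_u$ lie in $\bigcup_{j\geqslant 2}D_j$ and hence are not adjacent to $M$.

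Next I split on whether $y^\ast=u$. When $y^\ast=u$, the formula $\mathrm{wt}(u)=\bigl(g(T_u)+(\ell-2-i)g(u)\bigr)\alpha$ for $u\in D_i$, together with $\mathrm{wt}(u)\leqslant 3\alpha$ and $\ell\geqslant 14$, forces $g(u)=0$ and $g(T_u)=\epsilon(u)+1$. The option $\epsilon(u)=2$ dies because the non-pendant child $c$ of $u$ would require $\epsilon(c)\geqslant 3$; the option $\epsilon(u)=1$ with $u$ having a single child $c$ triggers $\epsilon(c)=2$, whence Lemma \ref{neigborone}(i) applied to $c$ demands $\deg_G(u)\geqslant 3$, contradicting $\deg_G(u)=2$. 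The remaining possibility, $u$ with two non-pendant children $c_1,c_2$ and $g(T_{c_i})=1$, reduces via Lemma \ref{lCv}(ii) to each $T_{c_i}$ being a path whose gray half-edge sits at its far endpoint, and the excess-zero equation places each such endpoint in $D_{\ell-2}$. This is Figure \ref{alpha}(ii).

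When $y^\ast\neq u$, I classify $u$ by its type. Type $(2,1)$ yields $g(T_u)=1$, so Lemma \ref{lCv}(ii) makes $T_u$ a path with its gray half-edge at the endpoint $w\neq u$; then $y^\ast=w$, and the excess-$\alpha$ equation at $w$ forces $\epsilon(w)=1$ and $w\in D_{\ell-3}$, since $\epsilon(w)=2$ would demand a $G$-pendant neighbor across the gray edge and hence $w\in D_0$, contradicting $w\in D_{\ell-4}$. This is Figure \ref{alpha}(i). Type $(3,1)$ is incompatible with Lemma \ref{lCv}(ii) since $u$ would have two children in $T_u$. Type $(3,2)$ produces a pendant child $p$ and a sibling $c$ with $g(T_c)=2$; the assumption $c\neq y^\ast$ launches a chain of $(3,2)$-vertices that must reach a $T_u$-leaf in $D_{\ell-2}$ which would then need a $G$-pendant neighbor across its gray edge, impossible since the leaf is not in $D_0$. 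Hence $c=y^\ast$, and repeating the analysis from the case $y^\ast=u$ on $c$ delivers Figure \ref{alpha}(iii). The main obstacle throughout is this ``pendant-chain'' elimination: propagating $(3,2)$-structure along a chain in $T_u$ until it collides with the level $D_{\ell-2}$, where the impossibility of a pendant neighbor (which would require membership in $D_0$) finally breaks the chain.
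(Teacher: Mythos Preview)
Your approach parallels the paper's: both use Lemma~\ref{lCvSH}(ii) to write $\beta(T_u)=\sum_x(\mathrm{wt}(x)-\epsilon(x)\alpha)$ as a sum of non-negative multiples of $\alpha$, then do a case analysis. The paper splits on the value of $\mathrm{wt}(u)-\epsilon(u)\alpha\in\{0,\alpha\}$; you instead locate the unique vertex $y^\ast$ carrying the excess and split on whether $y^\ast=u$. These are essentially equivalent reorganisations, and your treatment of Figures~\ref{alpha}(i) and~(ii) is fine.

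There is, however, a genuine gap in your handling of the case $u$ of type $(3,2)$ with $c\neq y^\ast$. You assert that the assumption ``$c\neq y^\ast$'' launches a chain of $(3,2)$-vertices that \emph{must reach} a $T_u$-leaf in $D_{\ell-2}$. But you have not ruled out the possibility that the chain stops early because some $c_j=y^\ast$ for $j\geqslant 2$: in that scenario $u=c_0$ and $c_1$ are both $(3,2)$ with pendant children, $c_2=y^\ast$ sits above them, and repeating your ``case $y^\ast=u$'' analysis at $c_2$ produces a tree with an extra $(3,2)$-layer --- not one of the three pictures. Your argument as written gives no contradiction here.

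The missing ingredient is Lemma~\ref{neigborone}(ii): if $u$ and $c_1$ are adjacent and both have $G$-pendant neighbours, then $\deg_G(u)\geqslant 4$, contradicting $\deg_G(u)=3$. This kills the chain at the very first step and forces $c=y^\ast$ immediately. You clearly know this lemma --- your cryptic ``$\epsilon(c)\geqslant 3$'' in the $y^\ast=u$, $\epsilon(u)=2$ sub-case is exactly this reasoning --- so the fix is short. But the chain-to-$D_{\ell-2}$ argument you wrote down is not complete, and the paper's proof avoids it entirely by invoking Lemma~\ref{neigborone}(ii) directly to obtain $\epsilon(w)=1$ for the non-pendant child $w$ of $u$.
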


\begin{proof}
As  $\beta(T_u)=\alpha$, $T_u$ is not a single-vertex tree      and  so     $g(T_u)\geqslant1$   by    Lemma \ref{lCv}(i).
If  $g(u)\geqslant1$,  then we obtain   from       Lemma \ref{lCvSH}(ii),   \eqref{defwt},  and $\ell\geqslant14$    that
$$\alpha=\beta(T_u)\geqslant \mathrm{wt}(u)-\epsilon(u)\alpha\geqslant \big(g(T_u)+(\ell-2-3)g(u)-\epsilon(u)\big)\alpha\geqslant(\ell-6)\alpha,$$  a contradiction.  Hence,    $g(u)=0$.
If $g(T_u)=1$, then     Lemma \ref{lCv}(ii)  and  $\beta(T_u)=\alpha$  yield    that    $T_u$ is a path  and the   unique  gray half-edge is incident with a $T_u$-pendant   vertex   $w\in D_{\ell-3}$ other than $u$. In this case,  $T_u$ has the  shape    illustrated  in Figure \ref{alpha}(i).  So, in what follows, we let  $g(T_u)\geqslant2$.

First, assume that   $\mathrm{wt}(u)-\epsilon(u)\alpha=\alpha$. By applying  Lemma \ref{lCvSH}(ii)   for any vertex  $x\in N_{T_u}(u)$ with $\deg_G(x)\geqslant2$, we conclude that $\beta(T_x)=0$. This along with  Lemma \ref{beta0} implies that
$T_u-u$ is a vertex-disjoint   union of  some paths, each of which  has   a unique  gray half-edge  incident with  its endpoint   belonging to $D_{\ell-2}$.
Moreover, it follows from Lemma \ref{neigborone}(i)  that $\epsilon(u)=1$. By   $\mathrm{wt}(u)=2\alpha$,  $g(u)=0$,  and  \eqref{defwt},   we  conclude     that  $g(T_u)=2$ and so
$\deg_{T_u}(u)=2$. Thus,     in this case,    $T_u$ has the  shape   depicted  in Figure \ref{alpha}(ii).

Next, assume that   $\mathrm{wt}(u)-\epsilon(u)\alpha=0$.  By     $g(T_u)\geqslant2$,  $g(u)=0$,  and  \eqref{defwt},   we  find      that  $g(T_u)=2$ and $\epsilon(u)=2$. Letting $v\in N_{T_u}(u)$ be    $G$-pendant, Lemma \ref{neigborone}(i)  implies that $\deg_G(x)\geqslant3$ for each vertex   $x\in N_{T_u}(u)\setminus\{v\}$.    This  along with  $g(T_u)=2$ and  Lemma \ref{lCv}(ii) implies  that $\deg_{T_u}(u)=2$.
Let $N_{T_u}(u)=\{v,w\}$ for some vertex $w$.  We have from \eqref{betaN} that  $\beta(T_{w})=\alpha$. Also,  by    $g(u)=0$, $g(T_u)=2$,   and   \eqref{grelation}, we deduce    that   $g(T_{w})=2$. Moreover,  it follows   from
$\deg_G(u)=3$ and  Lemma \ref{neigborone}(ii)  that   $\epsilon(w)=1$.
Using    Lemma \ref{lCvSH}(ii) and  \eqref{defwt}, we derive   that    $$\alpha=\beta(T_{w})\geqslant\mathrm{wt}(w)-\epsilon(w)\alpha\geqslant\alpha+ (\ell-6)g(w)\alpha.$$   As $\ell\geqslant14$,  we conclude  that    $g(w)=0$. Since  $\deg_G(u)=3$,   $\epsilon(w)=1$, and $g(T_{w})=2$, we   obtain from    Lemma \ref{lCv}(ii)  and  $\beta(T_{w})=\alpha$       that   $T_{w}-w$ is a vertex-disjoint   union of  two  paths, each of which  has   a unique  gray half-edge  incident with  its endpoint   belonging to $D_{\ell-2}$.
Hence,   in this case,    $T_u$ has the  shape  indicated   in Figure \ref{alpha}(iii).
\end{proof}

In the following lemma, we determine  all  trees   $T_u$ with $u\in   D_2$ and  $\beta(T_u)=2\alpha$.

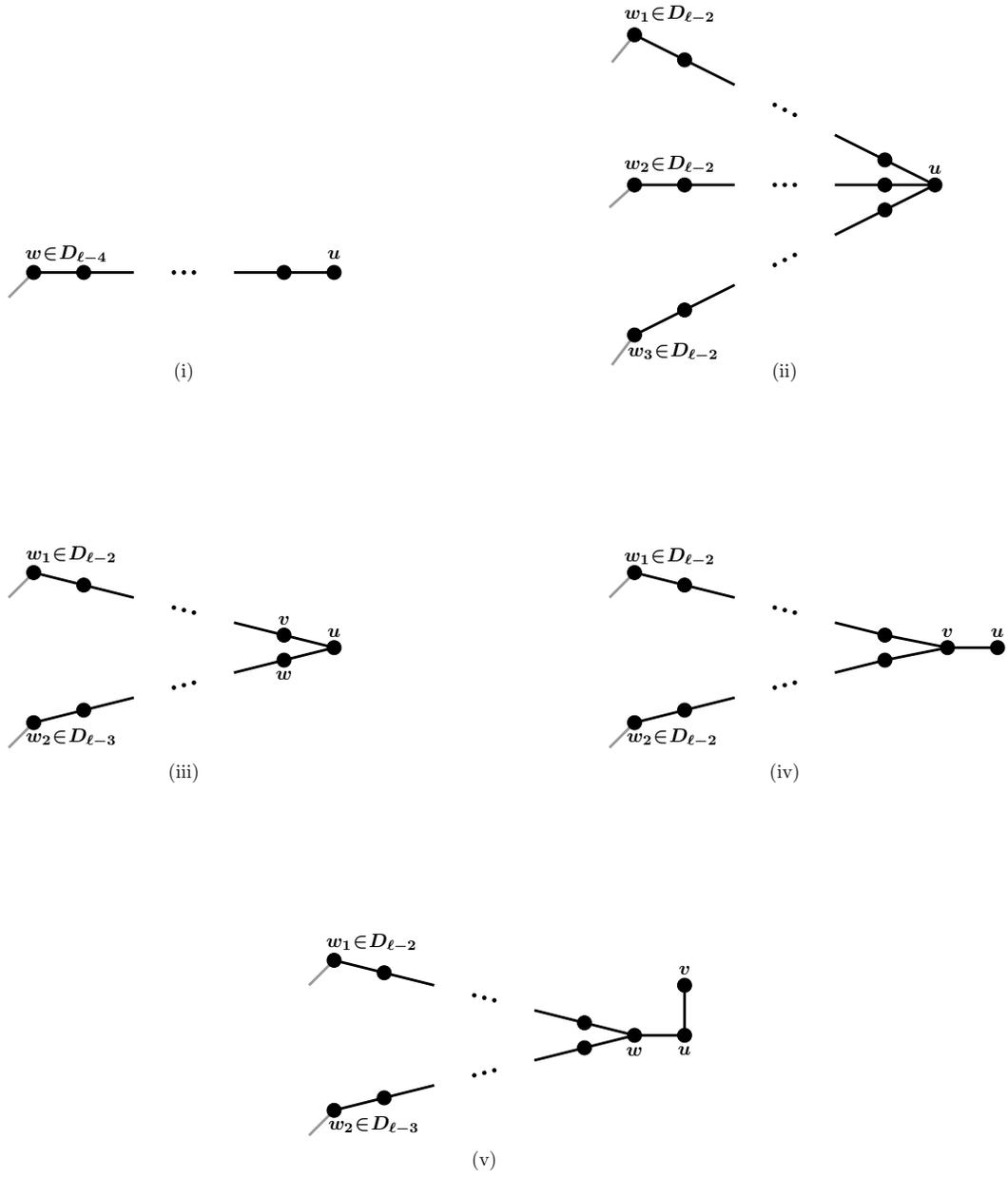
\begin{figure}
\centering
\scalebox{0.7}{\boldmath{
\begin{tikzpicture}
\begin{pgfonlayer}{nodelayer}
\draw [fill=black] (-4.5, 7) circle (4pt);
\draw [fill=black] (-5.5, 7) circle (4pt);
\draw [fill=black] (-9.5, 7) circle (4pt);
\draw [fill=black] (-10.5, 7) circle (4pt);
\draw [fill=black] (-7.5, 7) circle (1pt);
\draw [fill=black] (-7.3, 7) circle (1pt);
\draw [fill=black] (-7.7, 7) circle (1pt);
\node [style=none] (0) at (-4.5, 7) {$\bullet$};
\node [style=none] (1) at (-5.5, 7) {$\bullet$};
\node [style=none] (2) at (-6.5, 7) {};
\node [style=none] (3) at (-7.5, 7) {};
\node [style=none] (4) at (-8.5, 7) {};
\node [style=none] (5) at (-9.5, 7) {$\bullet$};
\node [style=none] (6) at (-10.5, 7) {$\bullet$};
\node [style=none] (7) at (-11, 6.5) {};
\node [style=none] (8) at (-9.85, 7.35) {$w\hspace{-1mm} \in \hspace{-1mm} D_{\ell -4}$};
\node [style=none] (9) at (-4.5, 7.35) {$u$};
\node [style=none] (10) at (-7.5, 7) {};
\node [style=none] (11) at (-7.5, 5) {(i)};
\draw [fill=black] (7.5, 8.75) circle (4pt);
\draw [fill=black] (6.5, 8.75) circle (4pt);
\draw [fill=black] (2.5, 8.75) circle (4pt);
\draw [fill=black] (1.5, 8.75) circle (4pt);
\draw [fill=black] (6.5, 9.25) circle (4pt);
\draw [fill=black] (2.5, 11.25) circle (4pt);
\draw [fill=black] (1.5, 11.75) circle (4pt);
\draw [fill=black] (6.5, 8.25) circle (4pt);
\draw [fill=black] (1.5, 5.75) circle (4pt);
\draw [fill=black] (2.5, 6.25) circle (4pt);
\draw [fill=black] (4.5, 10.25) circle (1pt);
\draw [fill=black] (4.3, 10.35) circle (1pt);
\draw [fill=black] (4.7, 10.15) circle (1pt);
\draw [fill=black] (4.5, 8.75) circle (1pt);
\draw [fill=black] (4.3, 8.75) circle (1pt);
\draw [fill=black] (4.7, 8.75) circle (1pt);
\draw [fill=black] (4.5, 7.25) circle (1pt);
\draw [fill=black] (4.3, 7.15) circle (1pt);
\draw [fill=black] (4.7, 7.35) circle (1pt);
\node [style=none] (12) at (7.5, 8.75) {$\bullet$};
\node [style=none] (13) at (6.5, 8.75) {$\bullet$};
\node [style=none] (14) at (5.5, 8.75) {};
\node [style=none] (15) at (4.5, 8.75) {};
\node [style=none] (16) at (3.5, 8.75) {};
\node [style=none] (17) at (2.5, 8.75) {$\bullet$};
\node [style=none] (18) at (1.5, 8.75) {$\bullet$};
\node [style=none] (19) at (1, 8.3) {};
\node [style=none] (20) at (2.2, 9.15) {$w_{2} \hspace{-1mm}\in \hspace{-1mm} D_{\ell -2}$};
\node [style=none] (21) at (7.5, 9.05) {$u$};
\node [style=none] (22) at (4.5, 8.75) {};
\node [style=none] (23) at (4.5, 5) {(ii)};
\node [style=none] (24) at (6.5, 9.25) {$\bullet$};
\node [style=none] (25) at (5.5, 9.75) {};
\node [style=none] (26) at (4.5, 10.25) {};
\node [style=none] (27) at (3.5, 10.75) {};
\node [style=none] (28) at (2.5, 11.25) {$\bullet$};
\node [style=none] (29) at (1.5, 11.75) {$\bullet$};
\node [style=none] (30) at (6.5, 8.25) {$\bullet$};
\node [style=none] (31) at (5.5, 7.75) {};
\node [style=none] (32) at (1.05, 11.2) {};
\node [style=none] (33) at (4.5, 7.25) {};
\node [style=none] (34) at (3.5, 6.75) {};
\node [style=none] (35) at (2.5, 6.25) {$\bullet$};
\node [style=none] (36) at (1.5, 5.75) {$\bullet$};
\node [style=none] (37) at (1.05, 5.15) {};
\node [style=none] (38) at (2.2, 12.15) {$w_{1} \hspace{-1mm} \in \hspace{-1mm} D_{\ell -2}$};
\node [style=none] (39) at (2.27, 5.4) {$w_{3} \hspace{-1mm} \in \hspace{-1mm} D_{\ell -2}$};
\draw [fill=black] (-4.5, -0.5) circle (4pt);
\draw [fill=black] (-5.5, -0.25) circle (4pt);
\draw [fill=black] (-9.5, 0.75) circle (4pt);
\draw [fill=black] (-10.5, 1) circle (4pt);
\draw [fill=black] (-5.5, -0.75) circle (4pt);
\draw [fill=black] (-9.5, -1.75) circle (4pt);
\draw [fill=black] (-10.5, -2) circle (4pt);
\draw [fill=black] (-7.5, 0.25) circle (1pt);
\draw [fill=black] (-7.7, 0.3) circle (1pt);
\draw [fill=black] (-7.3, 0.2) circle (1pt);
\draw [fill=black] (-7.5, -1.25) circle (1pt);
\draw [fill=black] (-7.7, -1.3) circle (1pt);
\draw [fill=black] (-7.3, -1.2) circle (1pt);
\node [style=none] (40) at (-4.5, -0.5) {$\bullet$};
\node [style=none] (41) at (-4.5, -0.2) {$u$};
\node [style=none] (42) at (-7.5, -3) {(iii)};
\node [style=none] (42) at (-5.5, 0.05) {$v$};
\node [style=none] (42) at (-5.5, -1.05) {$w$};
\node [style=none] (43) at (-5.5, -0.25) {$\bullet$};
\node [style=none] (44) at (-6.5, 0) {};
\node [style=none] (45) at (-7.5, 0.25) {};
\node [style=none] (46) at (-8.5, 0.5) {};
\node [style=none] (47) at (-9.5, 0.75) {$\bullet$};
\node [style=none] (48) at (-10.5, 1) {$\bullet$};
\node [style=none] (49) at (-5.5, -0.75) {$\bullet$};
\node [style=none] (50) at (-6.5, -1) {};
\node [style=none] (51) at (-11, 0.5) {};
\node [style=none] (52) at (-7.5, -1.25) {};
\node [style=none] (53) at (-8.5, -1.5) {};
\node [style=none] (54) at (-9.5, -1.75) {$\bullet$};
\node [style=none] (55) at (-10.5, -2) {$\bullet$};
\node [style=none] (56) at (-11, -2.5) {};
\node [style=none] (57) at (-9.75, 1.35) {$w_{1} \hspace{-1mm} \in \hspace{-1mm} D_{\ell -2}$};
\node [style=none] (58) at (-9.75, -2.3) {$w_{2} \hspace{-1mm} \in \hspace{-1mm} D_{\ell -3}$};
\node [style=none] (59) at (-4.5, -0.5) {};
\draw [fill=black] (7.75, -0.5) circle (4pt);
\draw [fill=black] (8.75, -0.5) circle (4pt);
\draw [fill=black] (6.5, -0.25) circle (4pt);
\draw [fill=black] (2.5, 0.75) circle (4pt);
\draw [fill=black] (1.5, 1) circle (4pt);
\draw [fill=black] (6.5, -0.75) circle (4pt);
\draw [fill=black] (2.5, -1.75) circle (4pt);
\draw [fill=black] (1.5, -2) circle (4pt);
\draw [fill=black] (4.5, 0.25) circle (1pt);
\draw [fill=black] (4.7, 0.2) circle (1pt);
\draw [fill=black] (4.3, 0.3) circle (1pt);
\draw [fill=black] (4.5, -1.25) circle (1pt);
\draw [fill=black] (4.7, -1.2) circle (1pt);
\draw [fill=black] (4.3, -1.3) circle (1pt);
\node [style=none] (60) at (7.75, -0.5) {$\bullet$};
\node [style=none] (61) at (4.5, -3) {(iv)};
\node [style=none] (62) at (2.2, 1.3) {$w_{1} \hspace{-1mm} \in \hspace{-1mm} D_{\ell -2}$};
\node [style=none] (63) at (2.25, -2.3) {$w_{2} \hspace{-1mm} \in \hspace{-1mm} D_{\ell -2}$};
\node [style=none] (64) at (8.75, -0.2) {$u$};
\node [style=none] (65) at (7.75, -0.2) {$v$};
\node [style=none] (66) at (8.75, -0.5) {$\bullet$};
\node [style=none] (67) at (6.5, -0.25) {$\bullet$};
\node [style=none] (68) at (5.5, 0) {};
\node [style=none] (69) at (4.5, 0.25) {};
\node [style=none] (70) at (3.5, 0.5) {};
\node [style=none] (71) at (2.5, 0.75) {$\bullet$};
\node [style=none] (72) at (1.5, 1) {$\bullet$};
\node [style=none] (73) at (6.5, -0.75) {$\bullet$};
\node [style=none] (74) at (5.5, -1) {};
\node [style=none] (75) at (1, 0.5) {};
\node [style=none] (76) at (4.5, -1.25) {};
\node [style=none] (77) at (3.5, -1.5) {};
\node [style=none] (78) at (2.5, -1.75) {$\bullet$};
\node [style=none] (79) at (1.5, -2) {$\bullet$};
\node [style=none] (80) at (1, -2.5) {};
\draw [fill=black] (1.5, -8.25) circle (4pt);
\draw [fill=black] (2.5, -8.25) circle (4pt);
\draw [fill=black] (2.5, -7.25) circle (4pt);
\draw [fill=black] (0.5, -8) circle (4pt);
\draw [fill=black] (-3.5, -7) circle (4pt);
\draw [fill=black] (-4.5, -6.75) circle (4pt);
\draw [fill=black] (0.5, -8.5) circle (4pt);
\draw [fill=black] (-3.5, -9.5) circle (4pt);
\draw [fill=black] (-4.5, -9.75) circle (4pt);
\draw [fill=black] (-1.5, -7.5) circle (1pt);
\draw [fill=black] (-1.3, -7.55) circle (1pt);
\draw [fill=black] (-1.7, -7.45) circle (1pt);
\draw [fill=black] (-1.5, -9) circle (1pt);
\draw [fill=black] (-1.3, -8.95) circle (1pt);
\draw [fill=black] (-1.7, -9.05) circle (1pt);
\node [style=none] (81) at (1.5, -8.25) {$\bullet$};
\node [style=none] (82) at (-1.5, -10.75) {(v)};
\node [style=none] (83) at (-3.75, -6.4) {$w_{1} \hspace{-1mm} \in \hspace{-1mm} D_{\ell -2}$};
\node [style=none] (84) at (-3.71, -10.05) {$w_{2} \hspace{-1mm} \in \hspace{-1mm} D_{\ell -3}$};
\node [style=none] (85) at (2.5, -8.55) {$u$};
\node [style=none] (86) at (1.5, -8.57) {$w$};
\node [style=none] (86) at (2.5, -6.95) {$v$};
\node [style=none] (87) at (2.5, -8.25) {$\bullet$};
\node [style=none] (88) at (2.5, -7.25) {$\bullet$};
\node [style=none] (89) at (0.5, -8) {$\bullet$};
\node [style=none] (90) at (-0.5, -7.75) {};
\node [style=none] (91) at (-1.5, -7.5) {};
\node [style=none] (92) at (-2.5, -7.25) {};
\node [style=none] (93) at (-3.5, -7) {$\bullet$};
\node [style=none] (94) at (-4.5, -6.75) {$\bullet$};
\node [style=none] (95) at (0.5, -8.5) {$\bullet$};
\node [style=none] (96) at (-0.5, -8.75) {};
\node [style=none] (97) at (-5, -7.25) {};
\node [style=none] (98) at (-1.5, -9) {};
\node [style=none] (99) at (-2.5, -9.25) {};
\node [style=none] (100) at (-3.5, -9.5) {$\bullet$};
\node [style=none] (101) at (-4.5, -9.75) {$\bullet$};
\node [style=none] (102) at (-5, -10.25) {};
\end{pgfonlayer}
\begin{pgfonlayer}{edgelayer}
\draw [line width=1.5pt,color=gray!80,fill=gray!80](7.center) to (6.center);
\draw [line width=1.5pt,color=black,fill=black](6.center) to (5.center);
\draw [line width=1.5pt,color=black,fill=black](5.center) to (4.center);
\draw [line width=1.5pt,color=black,fill=black](2.center) to (1.center);
\draw [line width=1.5pt,color=black,fill=black](1.center) to (0.center);
\draw [line width=1.5pt,color=gray!80,fill=gray!80](19.center) to (18.center);
\draw [line width=1.5pt,color=black,fill=black](18.center) to (17.center);
\draw [line width=1.5pt,color=black,fill=black](17.center) to (16.center);
\draw [line width=1.5pt,color=black,fill=black](14.center) to (13.center);
\draw [line width=1.5pt,color=black,fill=black](13.center) to (12.center);
\draw [line width=1.5pt,color=black,fill=black](12.center) to (24.center);
\draw [line width=1.5pt,color=black,fill=black](24.center) to (25.center);
\draw [line width=1.5pt,color=black,fill=black](27.center) to (28.center);
\draw [line width=1.5pt,color=black,fill=black](28.center) to (29.center);
\draw [line width=1.5pt,color=gray!80,fill=gray!80](29.center) to (32.center);
\draw [line width=1.5pt,color=black,fill=black](12.center) to (30.center);
\draw [line width=1.5pt,color=black,fill=black](30.center) to (31.center);
\draw [line width=1.5pt,color=black,fill=black](34.center) to (35.center);
\draw [line width=1.5pt,color=black,fill=black](35.center) to (36.center);
\draw [line width=1.5pt,color=gray!80,fill=gray!80] (36.center) to (37.center);
\draw [line width=1.5pt,color=black,fill=black](40.center) to (43.center);
\draw [line width=1.5pt,color=black,fill=black](43.center) to (44.center);
\draw [line width=1.5pt,color=black,fill=black](46.center) to (47.center);
\draw [line width=1.5pt,color=black,fill=black](47.center) to (48.center);
\draw [line width=1.5pt,color=gray!80,fill=gray!80](48.center) to (51.center);
\draw [line width=1.5pt,color=black,fill=black](40.center) to (49.center);
\draw [line width=1.5pt,color=black,fill=black](49.center) to (50.center);
\draw [line width=1.5pt,color=black,fill=black](53.center) to (54.center);
\draw [line width=1.5pt,color=black,fill=black](54.center) to (55.center);
\draw [line width=1.5pt,color=gray!80,fill=gray!80](55.center) to (56.center);
\draw [line width=1.5pt,color=black,fill=black](60.center) to (66.center);
\draw [line width=1.5pt,color=black,fill=black](67.center) to (68.center);
\draw [line width=1.5pt,color=black,fill=black](70.center) to (71.center);
\draw [line width=1.5pt,color=black,fill=black](71.center) to (72.center);
\draw [line width=1.5pt,color=gray!80,fill=gray!80](72.center) to (75.center);
\draw [line width=1.5pt,color=black,fill=black](73.center) to (74.center);
\draw [line width=1.5pt,color=black,fill=black](77.center) to (78.center);
\draw [line width=1.5pt,color=black,fill=black](78.center) to (79.center);
\draw [line width=1.5pt,color=gray!80,fill=gray!80](79.center) to (80.center);
\draw [line width=1.5pt,color=black,fill=black](67.center) to (60.center);
\draw [line width=1.5pt,color=black,fill=black](60.center) to (73.center);
\draw [line width=1.5pt,color=black,fill=black](81.center) to (87.center);
\draw [line width=1.5pt,color=black,fill=black](88.center) to (87.center);
\draw [line width=1.5pt,color=black,fill=black](89.center) to (90.center);
\draw [line width=1.5pt,color=black,fill=black](92.center) to (93.center);
\draw [line width=1.5pt,color=black,fill=black](93.center) to (94.center);
\draw [line width=1.5pt,color=gray!80,fill=gray!80](94.center) to (97.center);
\draw [line width=1.5pt,color=black,fill=black](95.center) to (96.center);
\draw [line width=1.5pt,color=black,fill=black](99.center) to (100.center);
\draw [line width=1.5pt,color=black,fill=black](100.center) to (101.center);
\draw [line width=1.5pt,color=gray!80,fill=gray!80](101.center) to (102.center);
\draw [line width=1.5pt,color=black,fill=black](81.center) to (95.center);
\draw [line width=1.5pt,color=black,fill=black](81.center) to (89.center);
\end{pgfonlayer}
\end{tikzpicture}}}
\caption{The trees  of Lemma \ref{beta2}.}\label{alphaTWO}
\end{figure}

\begin{lemma}\label{beta2}
Let $u\in  D_2$   and  $\beta(T_u)=2\alpha$.     Then,   $T_u$ is one  of the five trees indicated in Figure \ref{alphaTWO}, where all gray half-edges incident with $T_u$ are also shown.
\end{lemma}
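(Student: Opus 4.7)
The plan is to follow the same case-analysis template as in Lemmas~\ref{beta0} and~\ref{beta1}: split on $g(u)$, then on $g(T_u)$, then on the residue $\mathrm{wt}(u)-\epsilon(u)\alpha\in\{0,\alpha,2\alpha\}$, and then distribute $\beta(T_u)=2\alpha$ among the children of $u$ in $T_u$ using the additive decomposition of $\beta$ coming from Lemma~\ref{lCvSH}(ii). For each distribution I would invoke Lemmas~\ref{beta0} and~\ref{beta1} to identify the child subtrees, then use the local graph-theoretic Lemmas~\ref{neigborone}, \ref{closedfener}, \ref{pkgeneral}, and~\ref{book} to eliminate forbidden configurations, ultimately arriving at the five trees of Figure~\ref{alphaTWO}.

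I would first argue $g(u)=0$: since $u\in D_2$, formula~\eqref{defwt} combined with Lemma~\ref{lCvSH}(ii) gives $\beta(T_u)\geqslant(\ell-5)\alpha$ whenever $g(u)\geqslant 1$, which exceeds $2\alpha$ for $\ell\geqslant 14$. The case $g(T_u)=1$ then follows from Lemma~\ref{lCv}(ii): the unique gray half-edge sits at a $T_u$-pendant vertex, and $\beta(T_u)=2\alpha$ places that vertex in $D_{\ell-4}$, yielding Figure~\ref{alphaTWO}(i). For $g(T_u)\geqslant 2$ I would proceed case by case on the residue. If $\mathrm{wt}(u)-\epsilon(u)\alpha=2\alpha$, every non-pendant child of $u$ has $\beta=0$ and hence is a $D_{\ell-2}$-path by Lemma~\ref{beta0}; combining the balance $g(T_u)=\epsilon(u)+2$ with Lemma~\ref{neigborone}(i) (which rules out $\epsilon(u)=2$ because each such child would be forced to have $\deg_G=2$, contradicting the degree bound on neighbours of a pendant) leaves Figure~\ref{alphaTWO}(ii). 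If the residue equals $\alpha$, then exactly one non-pendant child $w_0$ has $\beta(T_{w_0})=\alpha$ and Lemma~\ref{beta1} gives three possibilities for $T_{w_0}$; the $g$-count together with Lemma~\ref{neigborone}(i) leaves exactly Figures~\ref{alphaTWO}(iii) and~(iv).

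The residue-zero case forces $\epsilon(u)=2$, and the $2\alpha$ contribution of non-pendant children splits either as one child with $\beta=2\alpha$ or as two children each with $\beta=\alpha$. In the single-child subcase I would rerun the present argument with $w_0\in D_3$ in place of $u$ to pin $T_{w_0}$ down to a pair of paths at $w_0$ reaching $D_{\ell-2}$ and $D_{\ell-3}$, yielding Figure~\ref{alphaTWO}(v). The two-child subcase would produce $u$ with a $G$-pendant neighbour together with two independent paths of its own ending in $D_{\ell-3}$, a configuration absent from Figure~\ref{alphaTWO}; ruling it out is what I expect to be the main obstacle. The natural plan is to consider the two long degree-two paths emanating from the non-pendant children down to their $D_{\ell-3}$ endpoints with their attached gray partners, and to combine Lemmas~\ref{closedfener}, \ref{pkgeneral}, and~\ref{book} with a $C_{2\ell}$-saturation argument exploiting the non-adjacency of the $G$-pendant at $u$ with the two gray endpoints, to force a forbidden cycle $C_{2\ell}$ in $G$; the rest of the proof is bookkeeping.
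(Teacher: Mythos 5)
Your outline follows the paper's template very closely: eliminating $g(u)\geqslant 1$ via \eqref{defwt} and Lemma~\ref{lCvSH}(ii), dispatching $g(T_u)=1$ by Lemma~\ref{lCv}(ii), and then splitting on the residue $\mathrm{wt}(u)-\epsilon(u)\alpha\in\{0,\alpha,2\alpha\}$ while invoking Lemmas~\ref{beta0}, \ref{beta1}, \ref{lCv}, and \ref{neigborone}(i) is exactly the paper's strategy, and your treatment of the residues $2\alpha$ and $\alpha$ matches.

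The weak point is your reading of the residue-zero case. You flag as the ``main obstacle'' a two-child subcase (two non-pendant children of $u$ each contributing $\beta=\alpha$) and propose to kill it with Lemmas~\ref{closedfener}, \ref{pkgeneral}, \ref{book} and a $C_{2\ell}$-saturation cycle argument. That machinery is neither needed nor what the paper uses: the constraint $g(T_u)=\epsilon(u)=2$ (forced by $\mathrm{wt}(u)=\epsilon(u)\alpha$, $g(u)=0$, and \eqref{defwt}) combined with \eqref{grelation} gives at most two non-pendant children and forces $g(T_{x_1})=g(T_{x_2})=1$ if there are two. By Lemma~\ref{beta1}, $\beta(T_{x_i})=\alpha$ with $g(T_{x_i})=1$ pins $T_{x_i}$ to the path of Figure~\ref{alpha}(i), whose root has $\deg_G=2$; but $u$ has a $G$-pendant neighbour, so Lemma~\ref{neigborone}(i) forces $\deg_G(x_i)\geqslant 3$. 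This immediate contradiction closes the subcase --- no cycle-structure argument is involved. (Equivalently, the paper's line that $g(T_u)=2$ together with Lemma~\ref{lCv}(ii) and the degree bound forces $\deg_{T_u}(u)=2$ does the same work in one step.) Likewise, for the surviving single non-pendant child $w$, the paper does not ``rerun the lemma one layer deeper'' as you propose; instead it directly computes $g(T_w)=2$, $\beta(T_w)=2\alpha$, $\epsilon(w)=1$ (via Lemma~\ref{neigborone}(ii)), and $g(w)=0$ from \eqref{defwt}, and then reads off the two-path shape of $T_w-w$ from Lemma~\ref{lCv}(ii), yielding Figure~\ref{alphaTWO}(v). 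So your plan would likely reach the same answer, but you have misidentified where the difficulty lies, and the heavy saturation lemmas you reach for play no role in this lemma.
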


\begin{proof}
As  $\beta(T_u)=2\alpha$, $T_u$ is not a single-vertex tree      and  so     $g(T_u)\geqslant1$   by    Lemma \ref{lCv}(i).
If  $g(u)\geqslant1$,  then we obtain   from       Lemma \ref{lCvSH}(ii),   \eqref{defwt},  and $\ell\geqslant14$    that
$$2\alpha=\beta(T_u)\geqslant \mathrm{wt}(u)-\epsilon(u)\alpha\geqslant \big(g(T_u)+(\ell-4)g(u)-\epsilon(u)\big)\alpha\geqslant(\ell-5)\alpha$$  a contradiction.
Hence,    $g(u)=0$.
If $g(T_u)=1$, then     Lemma \ref{lCv}(ii)  and  $\beta(T_u)=2\alpha$  yield    that    $T_u$ is a path  and the   unique  gray half-edge is incident with a $T_u$-pendant   vertex   $w\in D_{\ell-4}$. In this case,  $T_u$ has the  shape    illustrated  in Figure \ref{alphaTWO}(i).  So, in what follows, we let  $g(T_u)\geqslant2$.

First, assume that   $\mathrm{wt}(u)-\epsilon(u)\alpha=2\alpha$.
By applying  Lemma \ref{lCvSH}(ii)   for any vertex  $x\in N_{T_u}(u)$ with $\deg_G(x)\geqslant2$, we conclude that $\beta(T_x)=0$. This along with  Lemma \ref{beta0} implies that
$T_u-u$ is a vertex-disjoint   union of  some paths, each of which  has   a unique  gray half-edge  incident with  its endpoint     belonging to $D_{\ell-2}$.
Moreover, it follows from Lemma \ref{neigborone}(i)  that $\epsilon(u)=1$. By   $\mathrm{wt}(u)=2\alpha$,  $g(u)=0$,  and  \eqref{defwt},   we  get    that  $g(T_u)=3$.
Therefore, $\deg_{T_u}(u)=3$ and so,  in this case,  $T_u$ has the  shape   depicted  in Figure \ref{alphaTWO}(ii).

Next, assume that   $\mathrm{wt}(u)-\epsilon(u)\alpha=\alpha$.
By applying  Lemma \ref{lCvSH}(ii),  we deduce  that there exists  a vertex    $v\in N_{T_u}(u)$ such that  $\deg_G(v)\geqslant2$ and  $\beta(T_v)=\alpha$. In addition,
$\beta(T_x)=0$     for every  vertex  $x\in N_{T_u}(u)\setminus\{v\}$ with $\deg_G(x)\geqslant2$.  By  Lemma   \ref{beta1}, $T_v$ is one  of the three    trees indicated in Figure \ref{alpha}. In particular,   $g(T_v)\in\{1, 2\}$.
Working toward  a contradiction, suppose that  $\epsilon(u)=2$.  From     $g(u)=0$   and  \eqref{defwt}, we obtain that    $g(T_u)=3$.
It follows from  $g(T_v)\leqslant2$ and \eqref{grelation} that  there is a vertex    $v'\in N_{T_u}(u)\setminus\{v\}$ such that $g(T_{v'})\geqslant1$.
As   $\beta(T_{v'})=0$, Lemma   \ref{beta0}  yields that  $\deg_G(v')=2$,  contradicting    Lemma \ref{neigborone}(i).
This contradiction shows  that   $\epsilon(u)=1$.  It follows  from     $g(u)=0$   and  \eqref{defwt}  that    $g(T_u)=2$.
Let $T_v$ have  the  shape   depicted  in Figure \ref{alpha}(i).   Then,    $g(T_v)=1$ and so it follows from  $g(T_u)=2$ and \eqref{grelation} that  there is a vertex    $w\in N_{T_u}(u)\setminus\{v\}$ such that $g(T_{w})=1$. As   $\beta(T_{w})=0$, Lemma   \ref{beta0}  yields that   $T_{w}$ has the  shape   depicted  in Figure \ref{alpha0}.  Hence,   $T_u$ has the  shape   depicted  in Figure \ref{alphaTWO}(iii).
Let $T_v$ have  the  shape   depicted  in Figure \ref{alpha}(ii). Then,    $g(T_v)=2$ and so  $\deg_{T_u}(u)=2$ using   $\epsilon(u)=1$ and Lemma \ref{lCv}(ii). Thus,  in this case,  $T_u$ has the  shape   depicted  in Figure \ref{alphaTWO}(iv).
If $T_v$ has the  shape   depicted  in Figure \ref{alpha}(iii), then    $\epsilon(v)=2$ and  moreover,   $\deg_G(u)=2$ by     Lemma \ref{lCv}(i),  contradicting   Lemma \ref{neigborone}(i).

Finally,  assume that   $\mathrm{wt}(u)-\epsilon(u)\alpha=0$.  By     $g(T_u)\geqslant2$,  $g(u)=0$,  and  \eqref{defwt},   we  find      that  $g(T_u)=2$ and $\epsilon(u)=2$. Letting $v\in N_{T_u}(u)$ be    $G$-pendant, Lemma \ref{neigborone}(i)  implies that $\deg_G(x)\geqslant3$ for each vertex   $x\in N_{T_u}(u)\setminus\{v\}$.    This  along with  $g(T_u)=2$ and  Lemma \ref{lCv}(ii) implies  that $\deg_{T_u}(u)=2$.
Let $N_{T_u}(u)=\{v,w\}$ for some vertex $w$.  We have from \eqref{betaN} that  $\beta(T_{w})=2\alpha$. Also, from     $g(u)=0$, $g(T_u)=2$,   and   \eqref{grelation}, we deduce    that   $g(T_{w})=2$. Moreover,  it follows   from
$\deg_G(u)=3$ and  Lemma \ref{neigborone}(ii) that   $\epsilon(w)=1$.
Using    Lemma \ref{lCvSH}(ii) and  \eqref{defwt}, we derive   that    $$2\alpha=\beta(T_{w})\geqslant\mathrm{wt}(w)-\epsilon(w)\alpha\geqslant\alpha+ (\ell-6)g(w)\alpha.$$   As $\ell\geqslant14$,  we conclude  that    $g(w)=0$. Since  $\deg_G(w)\geqslant3$,   $\epsilon(w)=1$, and $g(T_{w})=2$, we   obtain from    Lemma \ref{lCv}(ii)  and  $\beta(T_{w})=2\alpha$       that   $T_{w}-w$ is a vertex-disjoint   union of  two  paths, one  of which  has   a unique  gray half-edge  incident with  its endpoint   belonging to $D_{\ell-2}$,  and the other one    has   a unique  gray half-edge  incident with  its endpoint   belonging to $D_{\ell-3}$.
Thus,    in this case,    $T_u$ has the  shape  indicated   in Figure \ref{alphaTWO}(v).
\end{proof}

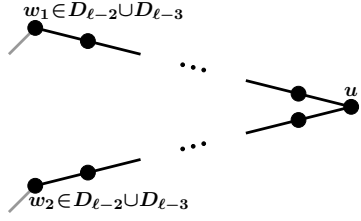
\begin{figure}
\centering
\scalebox{0.7}{\boldmath{
\begin{tikzpicture}
\begin{pgfonlayer}{nodelayer}
\draw [fill=black] (-3, 0) circle (4pt);
\draw [fill=black] (3, 1.5) circle (4pt);
\draw [fill=black] (2, 1.75) circle (4pt);
\draw [fill=black] (-2, 2.75) circle (4pt);
\draw [fill=black] (-3, 3) circle (4pt);
\draw [fill=black] (2, 1.25) circle (4pt);
\draw [fill=black] (-2, 0.25) circle (4pt);
\draw [fill=black] (0, 2.25) circle (1pt);
\draw [fill=black] (0.2, 2.2) circle (1pt);
\draw [fill=black] (-0.2, 2.3) circle (1pt);
\draw [fill=black] (0, 0.75) circle (1pt);
\draw [fill=black] (0.2, 0.8) circle (1pt);
\draw [fill=black] (-0.2, 0.7) circle (1pt);
\node [style=none] (0) at (3, 1.5) {$\bullet$};
\node [style=none] (1) at (3, 1.8) {$u$};
\node [style=none] (3) at (2, 1.75) {$\bullet$};
\node [style=none] (4) at (1, 2) {};
\node [style=none] (5) at (0, 2.25) {};
\node [style=none] (6) at (-1, 2.5) {};
\node [style=none] (7) at (-2, 2.75) {$\bullet$};
\node [style=none] (8) at (-3, 3) {$\bullet$};
\node [style=none] (9) at (2, 1.25) {$\bullet$};
\node [style=none] (10) at (1, 1) {};
\node [style=none] (11) at (-3.5, 2.5) {};
\node [style=none] (12) at (0, 0.75) {};
\node [style=none] (13) at (-1, 0.5) {};
\node [style=none] (14) at (-2, 0.25) {$\bullet$};
\node [style=none] (15) at (-3, 0) {$\bullet$};
\node [style=none] (16) at (-3.5, -0.5) {};
\node [style=none] (17) at (-1.7, 3.3) {$w_{1} \hspace{-1mm} \in \hspace{-1mm} D_{\ell -2}  \hspace{-1mm}  \cup  \hspace{-1mm} D_{\ell -3}$};
\node [style=none] (18) at (-1.6, -0.3) {$w_{2}\hspace{-1mm} \in \hspace{-1mm} D_{\ell -2}  \hspace{-1mm} \cup  \hspace{-1mm} D_{\ell -3}$};
\node [style=none] (19) at (3, 1.5) {};
\end{pgfonlayer}
\begin{pgfonlayer}{edgelayer}
\draw [line width=1.5pt,color=black,fill=black](0.center) to (3.center);
\draw [line width=1.5pt,color=black,fill=black](3.center) to (4.center);
\draw [line width=1.5pt,color=black,fill=black](6.center) to (7.center);
\draw [line width=1.5pt,color=black,fill=black](7.center) to (8.center);
\draw [line width=1.5pt,color=gray!80,fill=gray!80](8.center) to (11.center);
\draw [line width=1.5pt,color=black,fill=black](0.center) to (9.center);
\draw [line width=1.5pt,color=black,fill=black](9.center) to (10.center);
\draw [line width=1.5pt,color=black,fill=black](13.center) to (14.center);
\draw [line width=1.5pt,color=black,fill=black](14.center) to (15.center);
\draw [line width=1.5pt,color=gray!80,fill=gray!80] (15.center) to (16.center);
\end{pgfonlayer}
\end{tikzpicture}}}
\caption{The family $\mathcal{F}$.}\label{FFamily}
\end{figure}

Let   $u\in D_1$.  Reserve     $r(T_u)$ to denote the root adjacent to $u$.   We say    $T_u$ to be a  `positive-tree' if   $\beta(T_u)>0$, and   we  say    $T_u$ to be a  `null-tree' if  $\beta(T_u)=0$. The structure  of a null-tree  is   illustrated     in    Figure  \ref{alpha0}.
Set $$\mathcal{Z}_u=\big\{T_x \, \big| \,   \mbox{$T_x$ is  a null-tree   connected to $T_u$ through a gray edge}\big\}.$$
Define  an auxiliary function    $\gamma$    as follows.
For every  positive-tree  $T_u$ with  $x=r(T_u)$, define
$$\gamma(T_u)=\left\{\begin{array}{ll}
2\alpha & \mbox{ if $\deg_G(x)=3$,  $g(x)=0$,    and    $\epsilon(x)=2$;}\\
\alpha & \mbox{ if $\deg_G(x)\geqslant4$,  $g(x)=0$,  and   $\epsilon(x)=2$;}\\
\alpha & \mbox{ if    $g(x)=0$ and   $\epsilon(x)=1$;} \\
\frac{\alpha}{3} & \mbox{ if $g(x)>0$}
\end{array}\right.
$$
and, for every  null-tree $T_v\in \mathcal{Z}_u$ with $y=r(T_v)$, define
$$\gamma(T_v)=\left\{\begin{array}{ll}
\alpha & \mbox{ if $g(y)=0$;}\\
\frac{\alpha}{3} & \mbox{ otherwise.}
\end{array}\right.
$$
Denote by  $\mathcal{G}$   the family   of all      positive-trees $T_u$   indicated in    Figure  \ref{alpha}(i) with $\mathcal{Z}_u\neq\varnothing$.
Also, denote by   $\mathcal{F}$   the family  of all      positive-trees  $T_u$ indicated in    Figure  \ref{FFamily}  with      $g(r(T_u))=0$    and     $\epsilon(r(T_u))=2$   in which
a   $T_u$-pendant vertex           is    joined   to    a null-tree  if and only if it        belongs    to $D_{\ell-3}$.
Finally, denote by   $\mathcal{N}$     the family   of all     null-trees $T_u$   satisfying $\mathcal{Z}_u\neq\varnothing$.

In the following lemma, we   approximate the function  $\beta$  from below.

\begin{lemma}\label{gammalemma}
Let $T_u$ be a positive-tree  such that   $T_u\not\in \mathcal{G}\cup\mathcal{F}$. Then,
\begin{equation}\label{Ggamma}\beta(T_u)\geqslant \gamma(T_u)+\sum_{T_o\in  \mathcal{Z}_u}\gamma(T_o)\mbox{.}\end{equation}
\end{lemma}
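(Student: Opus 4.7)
The plan is to establish \eqref{Ggamma} by a case analysis on the structure of $T_u$, using the classification Lemmas \ref{beta0}--\ref{beta2} to enumerate the tight configurations and the exclusions $T_u \notin \mathcal{G} \cup \mathcal{F}$ to discard the problematic ones. Since $T_u$ is a positive-tree and every value of $\beta$ is an integer multiple of $\alpha$, we have $\beta(T_u) \geqslant \alpha$. On the right-hand side, $\gamma(T_u) \leqslant 2\alpha$ and each $\gamma(T_o) \leqslant \alpha$; moreover each null-tree $T_o \in \mathcal{Z}_u$ is tethered to $T_u$ by a unique gray edge whose $T_u$-side half-edge already contributes at least $\alpha$ to $\beta(T_u)$ via \eqref{defwt}, so essentially each attached null-tree ``pays'' for its own $\gamma(T_o)$ from the overweight of $T_u$.

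Next, I would split on $\beta(T_u)$. When $\beta(T_u) \geqslant 3\alpha$ the inequality follows from the coarse bounds above after bounding $|\mathcal{Z}_u|$ in terms of $g(T_u)$. The delicate regime is $\beta(T_u) \in \{\alpha, 2\alpha\}$. For $\beta(T_u) = \alpha$ with $u \in D_2 \cup D_3$, Lemma \ref{beta1} restricts $T_u$ to one of the three shapes of Figure \ref{alpha}: shape (i) admits a null-tree attachment precisely when $T_u \in \mathcal{G}$, which is excluded by hypothesis, while in shapes (ii) and (iii) the gray half-edges are pinned at $D_{\ell-2}$-endpoints and the structural Lemmas \ref{closedfener}, \ref{pkgeneral}, and \ref{book} obstruct any further attachments. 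The case $\beta(T_u) = 2\alpha$ is handled analogously via the five shapes of Figure \ref{alphaTWO} classified in Lemma \ref{beta2}, with $\mathcal{F}$ capturing precisely the obstruction arising from shape (iv).

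For the generic case $u \in D_1$, I would apply the decomposition
\begin{equation*}
\beta(T_u) = \bigl(\mathrm{wt}(u) - \epsilon(u)\alpha\bigr) + \sum_{v \in N_{T_u}(u)} \beta(T_v),
\end{equation*}
valid by Lemma \ref{lCvSH}(ii), and classify each subtree $T_v$ with $v \in N_{T_u}(u) \subseteq D_2$ through Lemmas \ref{beta0}--\ref{beta2}. The root $r(T_u) \in B \cup D_0$ enters only through the four subcases defining $\gamma(T_u)$, which correspond to the possible configurations of the triple $\bigl(g(r(T_u)), \deg_G(r(T_u)), \epsilon(r(T_u))\bigr)$ and must each be verified against a matching lower bound on $\beta(T_u)$ derived from $\mathrm{wt}(u) - \epsilon(u)\alpha$.

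The main obstacle will be the bookkeeping in the tight regime after the exclusions: for each residual shape I must check which gray half-edges on $T_u$ can host null-tree attachments compatibly with the structural constraints, and verify that the residual overweight $\beta(T_u) - \gamma(T_u)$ suffices to cover $\sum_{T_o \in \mathcal{Z}_u} \gamma(T_o)$. In particular, the $\alpha/3$ factors appearing in $\gamma$ when $g(r(T_o)) > 0$ suggest that up to three null-trees may share credit for a single gray half-edge at a common root, and this sharing will have to be tracked carefully using Lemmas \ref{neigborone} and \ref{book} to constrain the local structure at the root.
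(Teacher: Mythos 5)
Your plan has the right broad shape — separate $g(u)>0$, then analyze the remaining structure via the decomposition through Lemma~\ref{lCvSH}(ii) and the classification lemmas — but there are two substantive gaps, and the second is the one that actually drives the paper's proof.

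First, a structural confusion: the positive-tree $T_u$ in the statement has $u\in D_1$ (a positive-tree is by definition a $T_u$ with $u\in D_1$), whereas Lemmas~\ref{beta1} and~\ref{beta2} classify trees $T_v$ with $v\in D_2\cup D_3$ and $v\in D_2$ respectively. You cannot ``apply Lemma~\ref{beta1} to $T_u$'' when $\beta(T_u)=\alpha$; those lemmas can only be applied to the \emph{subtrees} $T_v$ for $v\in N_{T_u}(u)\subseteq D_2$, which is exactly what the paper does. Your later paragraph (``for the generic case $u\in D_1$'') does head in the right direction, but the earlier discussion of $u\in D_2\cup D_3$ is not a step in this proof.

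Second, and more seriously: the ``coarse bounds'' you invoke do \emph{not} close the case $\beta(T_u)\geqslant3\alpha$. With $g(u)=0$, the only cheap lower bound on the left is $\beta(T_u)\geqslant\mathrm{wt}(u)-\epsilon(u)\alpha=(g(T_u)-\epsilon(u))\alpha\geqslant(m-2)\alpha$ where $m=|\mathcal{Z}_u|\leqslant g(T_u)$, while the cheap upper bound on the right is $\gamma(T_u)+\sum\gamma(T_o)\leqslant(m+2)\alpha$. That leaves a gap of up to $4\alpha$ that does not vanish just because $\beta(T_u)\geqslant3\alpha$. Relatedly, your claim that each gray half-edge on the $T_u$ side ``already contributes at least $\alpha$ to $\beta(T_u)$'' is false when the half-edge sits at depth $\ell-2$ (as it does for null-tree attachments): in that case its contribution to $\beta(T_u)$ is exactly $0$ after the $\alpha$-per-vertex payments are made along the path. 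The mechanism the paper actually uses to close the gap is Lemma~\ref{pkgeneral}: whenever a branch of $T_u$ (or of a subtree $T_w$ with $g(T_w)\geqslant2$) reaches $D_{\ell-2}$ and is joined to a null-tree, together with a second long branch, Lemma~\ref{pkgeneral} forces a gray edge directly between $r(T_u)$ and the root of that null-tree; this makes $g$ positive at both roots, dropping $\gamma(T_u)$ and $\gamma(T_o)$ to $\alpha/3$ and producing the contradiction. Your sketch mentions the $\alpha/3$ sharing at the end as a bookkeeping worry, but it is not a side issue — without invoking Lemma~\ref{pkgeneral} to force those root-to-root gray edges, the inequality simply does not hold under the coarse bounds, and the plan as written would stall precisely where it claims the case is easy.
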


\begin{proof}
For simplicity, set    $m=|\mathcal{Z}_u|$. Trivially,   $m\leqslant g(T_u)$.  Also,     $g(T_u)\geqslant1$  by    Lemma \ref{lCv}(i) and $\gamma(T_u)\leqslant(3-\epsilon(u))\alpha$  by   Lemma \ref{neigborone}(ii).
If  $g(u)>0$, then we find   from  \eqref{defwt} and  Lemma \ref{lCvSH}(ii) that
$$\beta(T_u)\geqslant \big(g(T_u)+(\ell-3)g(u)-\epsilon(u)\big)\alpha>\big(g(T_u)+2\big)\alpha\geqslant \gamma(T_u)+\sum_{T_o\in  \mathcal{Z}_u}\gamma(T_o),$$  as $\epsilon(u)\leqslant2$ and $\ell\geqslant14$.
So, in what follows, we  consider the constraints      $g(u)=0$ and   \begin{equation}\label{Ngamma}\beta(T_u)< \gamma(T_u)+\sum_{T_o\in  \mathcal{Z}_u}\gamma(T_o)\end{equation} and demonstrate that     $T_u\in \mathcal{G}\cup\mathcal{F}$.

If there is a vertex    $v\in V(T_u)\setminus\{u\}$ with  $\beta(T_v)\geqslant3\alpha$, then  we  obtain     from    \eqref{defwt},   Lemma \ref{lCvSH}(ii),   \eqref{Ngamma}, and $\gamma(T_u)\leqslant(3-\epsilon(u))\alpha$      that
$$\big(g(T_u)-\epsilon(u)+3\big)\alpha\leqslant\big(\mathrm{wt}(u)-\epsilon(u)\big)\alpha+\beta(T_v)\leqslant \beta(T_u)< \gamma(T_u)+\sum_{T_o\in  \mathcal{Z}_u}\gamma(T_o)\leqslant\big(m-\epsilon(u)+3\big)\alpha$$
which  means that  $g(T_u)<m$, a contradiction. This contradiction demonstrates  that    $\beta(T_v)\leqslant2\alpha$ for every vertex  $v\in V(T_u)\setminus\{u\}$.

Working toward a contradiction, suppose that there is a vertex  $v\in N_{T_u}(u)$  with   $g(T_v)\geqslant2$. As  $\beta(T_v)\leqslant2\alpha$,    Lemmas \ref{lCv}(i), \ref{beta0}, \ref{beta1}, and \ref{beta2}  yield that
$\beta(T_v)\in\{\alpha, 2\alpha\}$ and $T_v$ has a vertex $w\in  D_2\cup D_3$ such  that $T_w$ is
a starlike tree with center $w$ and  with at least two branches. Further,  any      endpoint   of   $T_w$  is   incident with  a  unique  gray edge.
Assume that  there are $p$   endpoints    of   $T_w$    each of which   belongs   to $D_{\ell-2}$ and
is joined  to a null-tree    and there are  $q$   endpoints    of   $T_w$        none of which     is    joined  to a null-tree.  Note that  $p+q=2$  if $\beta(T_v)=\alpha$.

We claim that  $p\neq0$. Suppose that this is not the case.  So,    $m\leqslant g(T_u)+\beta(T_v)/\alpha-3$.
We  get   from  \eqref{defwt},   Lemma \ref{lCvSH}(ii),  \eqref{Ngamma},   and $\gamma(T_u)\leqslant(3-\epsilon(u))\alpha$ that
$$\big(g(T_u)-\epsilon(u)\big)\alpha+\beta(T_v)\leqslant \beta(T_u)< \gamma(T_u)+\sum_{T_o\in  \mathcal{Z}_u}\gamma(T_o)\leqslant(m-\epsilon(u)+3)\alpha$$
which gives   $m> g(T_u)+\beta(T_v)/\alpha-3$,
a contradiction. This contradiction
establishes  the claim.

Let $ww_0w_1\ldots w_i$ be a  branch   of  $T_w$ with $w_i\in D_{\ell-2}$ so   that $w_i$  is adjacent to a vertex  $a_{\ell-3}\in D_{\ell-2}$ by a  gray  edge, where $i\in \{\ell-6, \ell-5\}$ and  the path $a_0a_1\ldots a_{\ell-3}$  belongs to  $\mathcal{Z}_u$ with $a_0\in D_1$. Also,   let $ww'_0w'_1\ldots w'_{j}$ be another  branch   of  $T_w$ with  $j\in\{\ell-7, \ell-6, \ell-5\}$. Now, $w_0w_1\ldots w_{i}a_{\ell-3}\ldots a_1a_0$ and $w'_0w'_1\ldots w'_{j}$ are two   paths     on degree-two vertices of   $G$     the  sum of whose   lengths   is $i+j+\ell-2\geqslant 2\ell-1$, as $\ell\geqslant14$.   So,  Lemma \ref{pkgeneral}  guarantees  the existence of   a  gray  edge joining $r(T_u)$ and $r(T_{a_0})$.
This implies  that  $\gamma(T_u)=\gamma(T_{a_0})=\alpha/3$.
We conclude  from  \eqref{defwt},   Lemma \ref{lCvSH}(ii),  and  \eqref{Ngamma}   that
$$\big(g(T_u)-2\big)\alpha+\beta(T_v)
\leqslant \beta(T_u)
<\gamma(T_u)+\sum_{T_o\in  \mathcal{Z}_u}\gamma(T_o)
\leqslant\left(\frac{1+p}{3}+\big(g(T_u)-p-q\big)\right)\alpha
$$
which gives $\beta(T_v)<\frac{7-2p-3q}{3}\alpha$,  providing     the required contradiction.

Therefore, in view of Lemma \ref{lCv}(i),  we   achieved that    $g(T_v)=1$ for every    vertex  $v\in N_{T_u}(u)$ with $\deg_G(v)\geqslant2$.
Since      $\beta(T_v)\leqslant2\alpha$ for every   vertex  $v\in N_{T_u}(u)$,  one  concludes     from   Lemma   \ref{lCv}(ii)      that
$T_u$ is a starlike tree with center $u$   each of  whose  endpoints      belongs      to $D_{\ell-2}\cup D_{\ell-3}\cup D_{\ell-4}$   and is   incident with  a unique   gray  edge.  In particular,  $g(T_u)=\deg_G(u)-1$.  Assume that  there are $r$ endpoints  of $T_u$    each of which   belongs   to $D_{\ell-2}$ and is joined  to a null-tree      and there are  $s$  endpoints of $T_u$ belonging    to $D_{\ell-2}$         none of which    is joined  to a null-tree. For convenience, set $t=\deg_G(u)-1-r-s$.

Let  $T_u$  be    a  path.
Lemma  \ref{neigborone}(i) implies that  $\epsilon(r(T_u))=1$ and so
$\gamma(T_u)\leqslant\alpha$.  By   \eqref{Ngamma},   we obtain   that   $\alpha\leqslant\beta(T_u)<\gamma(T_u)+\sum_{T_o\in  \mathcal{Z}_u}\gamma(T_o)\leqslant2\alpha$  which means that  $\beta(T_u)=\alpha$ and $m=1$.
Hence,    $T_u\in\mathcal{G}$.

Now, assume that  $T_u$  is not   a  path. This means that $\deg_G(u)\geqslant3$ and so $r+s+t\geqslant2$.
Toward  a contradiction,   suppose that  $r\neq0$.
Let $uu_0u_1\ldots u_{\ell-4}$ be  a     branch    of  $T_u$  with $u_{\ell-4}\in D_{\ell-2}$  such  that $u_{\ell-4}$ is adjacent to a vertex  $b_{\ell-3}\in D_{\ell-2}$ by a  gray  edge, where the path $b_0b_1\ldots b_{\ell-3}$ belongs to  $\mathcal{Z}_u$ with $b_0\in D_1$.   Further,  let $uu'_0u'_1\ldots u'_{h}$ be another  branch   of  $T_u$, where $h\in\{\ell-6, \ell-5, \ell-4\}$. Now, two   paths $u_0u_1\ldots u_{\ell-4}b_{\ell-3}\ldots b_1b_0$ and $u'_0u'_1\ldots u'_{h}$ are   on degree-two vertices of   $G$     the  sum of whose   lengths   is $(2\ell-5)+h> 2\ell-1$, as $\ell\geqslant14$.   So,  Lemma \ref{pkgeneral}  guarantees  the existence of   a  gray  edge joining $r(T_u)$ and $r(T_{b_0})$.
This implies  that  $\gamma(T_u)=\gamma(T_{b_0})=\alpha/3$. In view of    $\epsilon(u)=1$, we  get  from \eqref{defwt}, \eqref{defbeta}, and \eqref{Ngamma} that $$\big(g(T_u)-1+t\big)\alpha\leqslant \beta(T_u)<\gamma(T_u)+\sum_{T_o\in  \mathcal{Z}_u}\gamma(T_o)\leqslant\left(\frac{1+r}{3}+\big(g(T_u)-r-s\big)\right)\alpha.$$
This  gives $2r+3s+3t<4$ which   contradicts $r+s+t\geqslant2$.
Next, suppose  that $r=0$. We  obtain   from  \eqref{defwt}, \eqref{defbeta}, and \eqref{Ngamma} that
\begin{equation}\label{nemi}\big(g(T_u)-1+t\big)\alpha\leqslant \beta(T_u)<\gamma(T_u)+\sum_{T_o\in  \mathcal{Z}_u}\gamma(T_o)\leqslant\gamma(T_u)+\big(g(T_u)-s\big)\alpha\end{equation}
which gives $s+t<\gamma(T_u)/\alpha+1$.  This implies  that the  equalities hold in  the   inequalities $s+t\geqslant2$ and  $\gamma(T_u)\leqslant2\alpha$. We have  $\deg_G(u)=3$ and  $g(T_u)=2$.  Moreover,  we derive  from \eqref{nemi} that
$$\beta(T_u)=\left\{\begin{array}{ll}
\alpha & \mbox{ if $s=2$;}\\
2\alpha & \mbox{ if $s=1$;}\\
3\alpha & \mbox{ if $s=0$.}
\end{array}\right.$$
This yields  that        none   of    $T_u$-pendant vertices   belongs    to $D_{\ell-4}$. Thus,  $T_u\in\mathcal{F}$.
\end{proof}

In what follows, we investigate   the  trees from the family     $\mathcal{G}\cup\mathcal{F}\cup\mathcal{N}$.

\begin{lemma}\label{3Mcaseii}
$|\mathcal{F}|=O(1)$.
\end{lemma}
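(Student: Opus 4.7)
My plan is to exploit the very rigid combinatorial structure imposed on each $T_u \in \mathcal{F}$ by Figure \ref{FFamily}, together with the degree-two path machinery of Lemmas \ref{book} and \ref{pkgeneral}, to show that only $O(1)$ such trees can coexist in $G$.

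First I would extract from each $T_u \in \mathcal{F}$ a canonical pair of long degree-two paths in $G$. Since $u \in D_1$ with $\deg_G(u) = 3$ and the two branches consist of degree-two vertices of $G$ ending at endpoints in $D_{\ell-2}\cup D_{\ell-3}$, each branch is itself a degree-two path of length $\ell-5$ or $\ell-4$. For any branch whose endpoint lies in $D_{\ell-3}$, the definition of $\mathcal{F}$ forces the gray half-edge at that endpoint to connect into a null-tree of $\mathcal{Z}_u$, which by Lemma \ref{beta0} is itself a path on degree-two vertices of length $\ell-3$; splicing this null-tree onto the branch yields a single degree-two path of length close to $2\ell-6$ whose other endpoint lies in $D_1$. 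This provides, for every $T_u \in \mathcal{F}$, two vertex-disjoint degree-two paths whose initial vertices share $u$ as a common neighbor and whose lengths sum to at least $k-1$.

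Next I would apply Lemma \ref{pkgeneral} to such a pair, which produces a very short alternative path of bounded length between $u$ and a far endpoint. In the hybrid case in which exactly one or both endpoints lie in $D_{\ell-2}$ and no null-tree extension is available on that side, I would instead use Lemma \ref{book} against any two distinct trees $T_u, T_{u'} \in \mathcal{F}$ whose branches supply disjoint degree-two paths of length $\geqslant \ell-4$ with a common neighbor along the gray-edge extensions. In parallel, I would combine this with the second structural constraint $\epsilon(r(T_u)) = 2$: the root $r(T_u)\in B\cup D_0$ has a $G$-pendant neighbor, and by Lemma \ref{neigborone}(i) distinct $G$-pendants have distinct neighbors, so the possible roots are injectively paired with a limited pool of $G$-pendant vertices near $B \cup D_0$ (those adjacent to $M$ lying in $A$, which is already known to be $O(1)$).

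The main obstacle I foresee lies in the case analysis over how many endpoints of $T_u$ lie in $D_{\ell-2}$ versus $D_{\ell-3}$, which mirrors the three sub-cases implicit in the proof of Lemma \ref{gammalemma}, and in ensuring that for distinct members of $\mathcal{F}$ the extracted degree-two paths are genuinely vertex-disjoint before invoking Lemmas \ref{pkgeneral} or \ref{book}; the gray edges terminating endpoints in $D_{\ell-2}$ may land in various trees and need to be tracked uniformly. Once the bookkeeping is organized so that the path-rigidity arguments apply uniformly across sub-cases, the combination of the degree-two path obstructions and the bounded supply of $G$-pendant vertices near $B \cup D_0$ should deliver $|\mathcal{F}| = O(1)$.
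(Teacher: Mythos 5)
Your proposal misses the key observation that anchors the whole counting argument, and it substitutes a bounded set that does not actually work. You try to bound $|\mathcal{F}|$ by pairing each root $r(T_u)$ with its $G$-pendant neighbor and then appealing to $|A|=O(1)$, writing that these pendants are ``those adjacent to $M$ lying in $A$.'' But that identification is wrong: a $G$-pendant vertex $y$ adjacent to $r(T_u)$ has $y$'s unique neighbor equal to $r(T_u)\in D_0$, and $D_0$ is disjoint from $M$, so $y$ is \emph{not} adjacent to $M$ and hence $y\notin A$. Thus $A$ does not control these pendants, and you have no $O(1)$-sized target for your injective pairing.

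The paper's argument anchors instead on $M$ itself: since $\deg_G(r(T_u))=3<c$, the root $r(T_u)$ cannot lie in $B$, so it lies in $D_0$ and therefore has a neighbor in $M$, which gives a natural map $T_u\mapsto(\text{an $M$-neighbor of }r(T_u))$ into the $O(1)$-sized set $M$. If this map were injective one is done immediately; otherwise, when two roots $x=r(T_u)$ and $x'=r(T_{u'})$ share a common neighbor $v\in M$, the paper uses the $C_{2\ell}$-saturation applied to the two pendants $y,y'$ attached at $x,x'$ to force a gray edge between $V(T_u)\cap D_{\ell-2}$ and $V(T_{u'})\cap D_{\ell-2}$. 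Because each $T_u\in\mathcal{F}$ has at most two endpoints in $D_{\ell-2}$, this caps the number of collisions at each $v\in M$ by $3$, giving $|\mathcal{F}|\leqslant3|M|=O(1)$. Your path-splicing and appeals to Lemmas \ref{book} and \ref{pkgeneral} are in the right toolbox, but without the ``$r(T_u)\in D_0$, hence adjacent to $M$'' observation you never produce a bounded universe to count against, and the vertex-disjointness issue you flag remains unresolved because you have no mechanism (such as the shared $M$-neighbor) forcing two distinct trees into a saturation configuration in the first place.
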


\begin{proof}
If $r(T_{u})$ and
$r(T_{u'})$  have no  common  neighbors in $M$ for any   two distinct trees     $T_u, T_{u'}\in\mathcal{F}$, then   $|\mathcal{F}|\leqslant |M|$, we are done.
So, assume that  there are  two distinct  trees     $T_u, T_{u'}\in\mathcal{F}$ such  that  two  vertices  $x=r(T_{u})$ and
$x'=r(T_{u'})$ have a  common  neighbor  in $M$, say $v$. In view of   $\deg_G(x)=\deg_G(x')=3$ and  $\epsilon(x)=\epsilon(x')=2$,  we  assume that   $N_G(x)=\{u, v, y\}$ and  $N_G(x')=\{u', v, y'\}$ for some   $G$-pendant vertices $y$ and $y'$.
As $G$ is   $C_{2\ell}$-saturated and $y$ is not adjacent to $y'$, there should be  a path $P$   of length $2\ell-1$ between  $y$ and $y'$.
If $v\in V(P)$  and  $u, u'\not\in V(P)$, then   the     length of $P$ would be   $4$,   contradicting  $\ell\geqslant14$.
Suppose for a contradiction and without loss of generality  that  $u, v\in V(P)$  and  $u'\not\in V(P)$,   then $P=yxuu_0u_1\ldots u_{\ell-4}Qvx'y'$, where  $Q$ is a path between $u_{\ell-4}$ and $v$ and  either   $uu_0u_1\ldots u_{\ell-4}$ is a branch of $T_u$ or   $uu_0u_1\ldots u_{\ell-5}$ is a branch of $T_u$ and $u_{\ell-5}u_{\ell-4}$ is a gray edge. As      $Q$ is of length  at least   $\ell-1$, the    length of $P$ is at least   $2\ell$, a contradiction. This contradiction shows  that  $u, u'\in V(P)$ and $v\not\in V(P)$.
If one of $T_u$ and $T_{u'}$ has no    endpoints  in $ D_{\ell-2}$, then     $P$ is of length  at least   $4\ell-4$ which is again a  contradiction.
Thus,   there are two    vertices $w\in V(T_u)\cap D_{\ell-2}$ and   $w'\in V(T_{u'})\cap D_{\ell-2}$  such that   $ww'$ is a gray edge.

We have shown  that,  for a  given  tree      $T_u \in\mathcal{F}$, there are  at most two    trees       $T_{u'} \in\mathcal{F}$ for which $N_G(r(T_{u}))\cap N_G(r(T_{u'}))\neq\varnothing$. This yields  that,   for a  given vertex      $v\in M$,   the number of  trees    from $\mathcal{F}$ connected to $v$   is at most $3$. Therefore,   $|\mathcal{F}|\leqslant 3|M|$ and  hence  $|\mathcal{F}|=O(1)$ by Lemma \ref{M2k}.
\end{proof}

If $\mathcal{Z}_u=\{T_v\}$  for  an  element   $T_u\in\mathcal{G}\cup\mathcal{N}$, then we say    $T_u$ and  $T_v$  are `mate'  of each other.
Recall that two mated trees  have different roots  by Lemma \ref{closedfener}.
We are going to   update  the values of  the  function $\gamma$        so that \eqref{Ggamma} holds for  every  positive-tree     $T_u\in  \mathcal{G}$ and its mate.  To do this, let $T_u$ be an arbitrary element of $\mathcal{G}$ and let $T_v$ be the mate  of $T_u$.   Since there is no path of length $2\ell-4$ on degree-two vertices in $V(G)\setminus M$,  either $\deg_G(r(T_u))\geqslant3$ or  $\deg_G(r(T_v))\geqslant3$. Now, we are making the following updates.
\begin{itemize}[noitemsep,  topsep=0pt]
\item[$\bullet$]   Update   $\gamma(T_u)$ and   $\gamma(T_v)$ to $\alpha$ and  $0$,  respectively,     if $\deg_G(r(T_u))=2$ and $\deg_G(r(T_v))\geqslant3$.
\item[$\bullet$]   Update   $\gamma(T_u)$ and   $\gamma(T_v)$ to $0$ and  $\alpha$,  respectively,    if $\deg_G(r(T_u))\geqslant3$ and $\deg_G(r(T_v))=2$.
\item[$\bullet$]   Update   $\gamma(T_u)$ and   $\gamma(T_v)$ to $\alpha/2$ and  $\alpha/2$,  respectively,   if $\deg_G(r(T_u))\geqslant3$ and $\deg_G(r(T_u))\geqslant3$.
\end{itemize}
Hereafter,  we disregard the terms   positive-tree and null-tree.
For each  vertex   $u\in D_1$, we say        $T_u$ to be a  `$\gamma$-tree' if $T_u\not\in\mathcal{N}$ and    $\gamma(T_u)=\gamma$. Also, we consider  each  tree from  $\mathcal{N}$ to be a $0$-tree.

Before we give a proof for Theorem  \ref{mainEC},  we    establish  two technical lemmas that will be used in the proof of the theorem.

\begin{lemma}\label{akh1}
Let   two  $0$-trees  have a shared    root      $x$. Then, $g(x)\neq0$.
\end{lemma}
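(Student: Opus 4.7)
My plan is to argue by contradiction: assume $g(x)=0$ and derive a forbidden $C_{2\ell}$ in $G$. Let $T_{u_1},T_{u_2}$ be the two $0$-trees sharing root $x$; each is either a null tree in $\mathcal{N}$ or a $\mathcal{G}$-tree that was updated to $\gamma=0$. In both possibilities $T_{u_i}$ is a degree-two path in $G$ ending at some $w_i$ incident to the unique gray edge $w_iw_i'$ that joins it to the endpoint of its mate null-tree $T_{v_i}$ (rooted at $x_i'=r(T_{v_i})\in B\cup D_0$, with $x_i'\neq x$ by Lemma~\ref{closedfener}). Gluing $T_{u_i}$, this gray edge, and $T_{v_i}$ produces a path $P_i$ from $u_i$ to $v_i$ on degree-two vertices, of length $L_i\in\{2\ell-5,2\ell-6\}$. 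Under $g(x)=0$ every edge at $x$ is black; combining this with the dichotomy for black edges between $B\cup D_0$ vertices (only a $D_0$ endpoint can designate the other endpoint, which then lies in $M\cup B$) and with the forthcoming Step~2 shows that $x\in B$ and $x_1',x_2'\in D_0$ with their designated edges pointing to $x$.

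For Step~2 I would apply Lemma~\ref{pkgeneral} to the vertex-disjoint degree-two paths $P_1,P_2$ with common neighbor $x$ of $u_1,u_2$. Since $\ell\geqslant 14$ we have $L_1+L_2\geqslant 4\ell-12\geqslant 2\ell-1$, so the lemma yields a path from $x$ to $v_i$ of length $2\ell-L_i-3\in\{2,3\}$ avoiding $u_i$. Because $v_i$ has only the two neighbors $x_i'$ and the degree-two vertex $v_i^{(2)}$, the penultimate vertex of this short path must be $x_i'$ (the $v_i^{(2)}$ branch forces $x$ to coincide with a degree-two vertex of $T_{v_i}$, which is impossible). Now split cases. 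If $L_i=2\ell-5$ (the null-tree case) the short path is $x-x_i'-v_i$, so $x\sim x_i'$ directly. If $L_i=2\ell-6$ then $T_{u_i}\in\mathcal{G}$ was updated to $\gamma=0$, which by the update rule requires $\deg_G(x_i')=2$; hence $x_i'\in D_0$ has neighbors $\{v_i,m_i\}$ with $m_i\in M$ (using that $x_i'\in D_0$ must touch $M$ by definition), and the short path is forced to be $x-m_i-x_i'-v_i$. But the edge $xm_i$ is gray since neither $x\in B$ nor $m_i\in M$ can designate it, contradicting $g(x)=0$. Thus both $T_{u_i}\in\mathcal{N}$ and $x\sim x_1',x_2'$.

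Step~3 is to produce the $C_{2\ell}$. Steps~1--2 exhibit in $G$ the two cycles
$$C_i=x-u_i-T_{u_i}-w_i-w_i'-T_{v_i}^{\mathrm{rev}}-v_i-x_i'-x$$
of length exactly $2\ell-2$. I would apply $C_{2\ell}$-saturation to the non-adjacent pair $(v_1,v_2)$ (non-adjacent since each $v_i$ has degree two with its two neighbors in $D_0\cup D_2$, and $T_{v_1},T_{v_2}$ are vertex-disjoint). The degree-two forcing along $T_{u_j}\cup T_{v_j}$ constrains any length-$(2\ell-1)$ path $Q$ between $v_1,v_2$ very tightly: the only admissible lengths on each forced branch are multiples of $\ell-3$, so $Q$ must exit at $x_1'$, traverse three carefully placed additional edges through $x$, and enter $x_2'$. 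Extracting these three edges gives a path $x_1'-y_1-y_2-x$ in $G$ with $y_1,y_2$ disjoint from $C_1\setminus\{x_1',x\}$. Splicing this 3-edge path into $C_1$ in place of the single edge $x_1'x$ inflates $C_1$ to a cycle of length $(2\ell-2)-1+3=2\ell$ in $G$, contradicting $C_{2\ell}$-freeness.

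The main obstacle I anticipate is Step~3: verifying that the two extra vertices $y_1,y_2$ supplied by saturation of $(v_1,v_2)$ can actually be spliced into $C_1$ disjointly, and that alternative routings of $Q$ cannot evade this conclusion. A delicate sub-case is $x_1'=x_2'$, where the ancillary Lemma~\ref{akh1} applied at $x_1'$ gives $g(x_1')\neq 0$ but is already satisfied by the gray edge from $x_1'$ to $M$, so one must instead extract the $C_{2\ell}$ directly from the figure-eight $C_1\cup C_2$ using the fact that $x$ has no $M$-neighbors under $g(x)=0$.
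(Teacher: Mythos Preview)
Your Steps 1--2 are essentially the paper's argument, but Step 3 is a wrong turn that you yourself flag as the main obstacle --- and indeed it cannot be completed as written. The path $Q$ of length $2\ell-1$ between $v_1$ and $v_2$ is not forced to pass through $x$ at all: the roots $x_1',x_2'$ may have many neighbours besides $v_i$ and $x$, so $Q$ can leave through $x_1'$ into completely uncharted territory and re-enter through $x_2'$ without ever touching $x$ or the cycle $C_1$. Your ``multiples of $\ell-3$'' heuristic and the splicing claim therefore have no force. The sub-case $x_1'=x_2'$ that you single out is a symptom of the same problem, not an isolated difficulty.

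The paper avoids Step 3 entirely. After obtaining $x\sim y$ and $x\sim z$ (your $x_i'$, or $m_i$ in the $\mathcal G$ case), it simply argues about edge colours: if $x\in D_0$ and $y\neq z$, then $x$ designates at most one of $xy,xz$, and neither $y$ nor $z$ (being in $M\cup B\cup D_0$) can designate an edge back to $x\in D_0$, so the other edge is gray. The only case left is $x\in D_0$, $y=z$, with $xy$ the single designated black edge of $x$; there the paper applies saturation not to $(v_1,v_2)$ but to $(u_0,v_0)$, the two $D_1$-neighbours of $x$. Because $x$ now has exactly the three neighbours $u_0,v_0,y$, every $u_0$--$v_0$ path is completely forced along the degree-two chains, and the length-$(2\ell-1)$ path must take the form $u_0u_1\cdots u_{2\ell-5}yQxv_0$ with $Q$ a path of length $2$, producing a vertex $w$ with $xw$ gray.

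Two smaller points: your reduction of the $\mathcal G$ case is circular (you use $x\in B$ to conclude $xm_i$ is gray, but $x\in B$ was to be deduced from that very conclusion). The paper sidesteps this by observing that when $T_{u_i}\in\mathcal G$ is updated to $0$, the mate's root $x_i'$ has degree $2$ and can simply be absorbed into the degree-two path, so both the $\mathcal N$ and $\mathcal G$ cases yield a path $xu_0\cdots u_{2\ell-5}y$ of the same length with $y\in M\cup B\cup D_0$.
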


\begin{proof}
We have $x\in B\cup D_0$. According to  the  definition of  $0$-tree, there are two paths  $xu_0u_1\ldots u_{2\ell-5}y$ and $xv_0v_1\ldots v_{2\ell-5}z$, where  $y, z\in M\cup B\cup D_0$ and   $\deg_G(u_i)=\deg_G(v_j)=2$ for all indices $i$ and $j$.
By applying    Lemma \ref{pkgeneral} for two   paths   $u_0u_1\ldots u_{2\ell-5}$ and $v_0v_1\ldots v_{2\ell-5}$  whose    lengths are     $2\ell-5$,  one finds    a   path of length $2$ between $x$    and $u_{2\ell-5}$ as well as there is   a   path of length $2$ between $x$    and $v_{2\ell-5}$. From  $\deg_G(u_{2\ell-5})=\deg_G(v_{2\ell-5})=2$, we deduce that $xy, xz\in E(G)$.
Thus, if either  $x\in B$ or $y\neq z$, then $x$  to be   joined to one of  $y$ and  $z$   through a  gray edge, as
required.

So,  assume   that $x\in D_0$,     $y=z$, and the edge $xy$ is black. We remark  that this happens if     $y\in M$. As $G$ is   $C_{2\ell}$-saturated and  $u_0v_0\not\in E(G)$, there is a path  of length $2\ell-1$  between $u_0$ and $v_0$.  Two paths   $u_0xv_0$ and  $u_0u_1\ldots u_{2\ell-5}yv_{2\ell-5}\ldots v_1v_0$ are of lengths $2$ and $4\ell-8$, respectively. All    remaining paths  between  between $u_0$ and $v_0$ have   the form    $P=u_0u_1\ldots u_{2\ell-5}yQxv_0$ for some path $Q$ between $x$ and $y$. Such a path $P$  is of length $2\ell-1$ if $Q$ has length $2$. This means   that  $Q=ywx$ for  some  vertex  $w\in M\cup B\cup D_0$. The  edge $xw$  is gray, as desired.
\end{proof}

\begin{lemma}\label{akh2}
There are no    a   $0$-tree     and   a   $\alpha/2$-tree       sharing   a  root      $x$ with   $\deg_G(x)=3$.
\end{lemma}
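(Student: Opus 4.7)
The plan is to assume for contradiction that a $0$-tree $T_1$ and an $\alpha/2$-tree $T_2$ share a root $x$ with $\deg_G(x)=3$, and then derive an impossible adjacency via Lemma \ref{pkgeneral}. First I would pin down $x$'s neighborhood: since $\deg_G(x)=3<c$, we have $x\notin B$, so $x\in D_0$; write $N_G(x)=\{u_1,u_2,z\}$, where $u_1,u_2\in D_1$ are the designated $D_1$-vertices of $T_1$ and $T_2$. Because $x\in D_0$ must have a neighbor in $M$ and $u_1,u_2\in D_1$, it follows that $z\in M$. Hence all three edges at $x$ are designated (black), giving $g(x)=0$.

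Next I would record the two long degree-$2$ paths emanating from $x$. The $\alpha/2$-tree $T_2$ together with its mate (a $\mathcal{G}$-tree paired with a null-tree, both roots of degree $\geqslant 3$) yields a degree-$2$ path $P_2=u_2\ldots v_2$ of length $2\ell-6$ followed by the edge $v_2 y'$, where $y'\in B\cup D_0$ has $\deg_G(y')\geqslant 3$. Similarly, since $T_1$ is a $0$-tree, $T_1$ and its mate give a degree-$2$ path $P_1$ of length $2\ell-5$ from $u_1$ ending at some vertex $u_1^{\ast}$: when $T_1\in\mathcal{N}$, $u_1^{\ast}$ is the mate's $D_1$-vertex $u_1'$; otherwise ($T_1$ is a $\gamma$-tree with $\gamma=0$) $u_1^{\ast}$ is a degree-$2$ root in $D_0$. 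Since $\ell\geqslant 14$, $(2\ell-5)+(2\ell-6)=4\ell-11\geqslant k-1$, so Lemma \ref{pkgeneral} produces a path of length $2$ between $x$ and $u_1^{\ast}$ avoiding $u_1$, and a path of length $3$ between $x$ and $v_2$ avoiding $u_2$. A short check using that $v_2$'s neighbors are $y'$ and the degree-$2$ vertex $b_1$ forces the latter path to be $x-z-y'-v_2$, so $z$ is adjacent to $y'$.

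In the main case $T_1\in\mathcal{N}$, the length-$2$ path $x-w-u_1'$ requires $w\in N_G(u_1')\cap(N_G(x)\setminus\{u_1\})=\{r(\mathrm{mate}(T_1)),p_1'\}\cap\{u_2,z\}$; but $p_1'\in D_2$ and $r(\mathrm{mate}(T_1))\in B\cup D_0$, so neither matches $u_2\in D_1$, and the disjointness $M\cap(B\cup D_0)=\varnothing$ forbids $r(\mathrm{mate}(T_1))=z$, yielding the required contradiction. In the remaining $\gamma$-tree subcases the analogous pigeonhole only pins $z$ as the non-$D_1$ neighbor of the degree-$2$ root $u_1^{\ast}=y_1\in D_0$, forcing $z$ adj $y_1$; combining this with $z$ adj $y'$, I would close the argument either by a second application of Lemma \ref{pkgeneral} to a refined pair of degree-$2$ paths now anchored at $z$, or by invoking $C_{2\ell}$-saturation on a judiciously chosen nonadjacent pair whose available path lengths in the rigid degree-$2$ backbone fall short of $2\ell-1$. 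The main obstacle is precisely this last step in the $\gamma$-tree cases: whereas the $\mathcal{N}$-case closes immediately from the disjointness of the layers $M$, $B\cup D_0$, $D_1$, $D_2$, ruling out the $\gamma$-tree configuration requires carefully exploiting the rigidity of degree-$2$ chains together with a further saturation or pigeonholing argument.
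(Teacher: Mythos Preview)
Your setup is correct: $x\in D_0$, the third neighbor of $x$ lies in $M$, and the $0$-tree and $\alpha/2$-tree furnish degree-two paths of lengths $2\ell-5$ and $2\ell-6$ issuing from $x$. Your application of Lemma~\ref{pkgeneral} to obtain the length-$2$ path from $x$ to the far end of the long path is also the paper's first move, and your disposal of the $\mathcal{N}$-subcase via the disjointness of $M$ and $B\cup D_0$ is essentially the paper's observation in that case. However, the case split on whether $T_1\in\mathcal{N}$ is unnecessary, and the piece you flag as the obstacle in the remaining subcase is a genuine gap: neither a second application of Lemma~\ref{pkgeneral} nor the extra adjacency between your $z$ and $y'$ will by itself close the argument.

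The missing idea is a direct enumeration of paths via $C_{2\ell}$-saturation. In the paper's notation, write the two degree-$2$ paths as $u_0u_1\cdots u_{2\ell-5}$ and $v_0v_1\cdots v_{2\ell-6}$, with $u_0,v_0$ adjacent to $x$, with $u_{2\ell-5}$ adjacent to some $y\in M\cup B\cup D_0$, and with $v_{2\ell-6}$ adjacent to some $z\in B\cup D_0$. The length-$2$ path forces $y$ to be the third neighbor of $x$, hence $y\in M$ and in particular $y\neq z$. Now invoke saturation on the nonadjacent pair $(u_0,u_{2\ell-6})$. Because $\deg_G(x)=3$ with $N_G(x)=\{u_0,v_0,y\}$ and all $u_i,v_j$ have degree $2$, every path between $u_0$ and $u_{2\ell-6}$ is one of: the direct path $u_0u_1\cdots u_{2\ell-6}$ of length $2\ell-6$; the short detour $u_0xyu_{2\ell-5}u_{2\ell-6}$ of length $4$; or a path of the form $u_0xv_0\cdots v_{2\ell-6}zQyu_{2\ell-5}u_{2\ell-6}$, which has length at least $2\ell$ since $y\neq z$ forces the segment $Q$ from $z$ to $y$ to have length at least $1$. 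None of these has length $2\ell-1$, contradicting saturation. This argument is uniform across both of your subcases and is precisely the ``saturation on a judiciously chosen nonadjacent pair'' you gestured at; the specific pair is $(u_0,u_{2\ell-6})$, and the point is that $\deg_G(x)=3$ makes the path enumeration finite and explicit.
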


\begin{proof}
We have $x\in   D_0$. By the definition of  $0$-tree     and       $\alpha/2$-tree, there are two paths  $xu_0u_1\ldots u_{2\ell-5}y$ and $xv_0v_1\ldots v_{2\ell-6}z$, where  $y \in M\cup B\cup D_0$, $z  \in   B\cup D_0$,  and   $\deg_G(u_i)=\deg_G(v_j)=2$ for all indices $i$ and $j$.
By applying    Lemma \ref{pkgeneral} for two   paths   $u_0u_1\ldots u_{2\ell-5}$ and $v_0v_1\ldots v_{2\ell-6}$   whose    lengths   are   $2\ell-5$ and  $2\ell-6$,  respectively,  we find  a   path of length $2$ between $x$    and $u_{2\ell-5}$.  We conclude  from  $\deg_G(x)=3$ and  $\deg_G(u_{2\ell-5})=2$    that $x$ is adjacent to $y$ by a gray edge and   $y\in M$. In particular,   $y \neq z$.

Since   $G$ is   $C_{2\ell}$-saturated and  $u_0u_{2\ell-6}\not\in E(G)$, there exists   a path  of length $2\ell-1$  between $u_0$ and $u_{2\ell-6}$.   Two paths   $u_0xyu_{2\ell-5}u_{2\ell-6}$ and        $u_0u_1\ldots u_{2\ell-6}$ are of lengths $4$ and $2\ell-6$, respectively.   From  $\deg_G(x)=3$, one deduces that   all    remaining paths  between  between  $u_0$ and $u_{2\ell-6}$ have     the form    $P=u_0xv_0v_1\ldots v_{2\ell-6}zQyu_{2\ell-5}u_{2\ell-6}$ for some path $Q$ between $y$ and $z$. Such a path $P$  is of   length at least $2\ell$,  since  $Q$ has length at least $1$. This gives us  a contradiction.
\end{proof}

We are now in a position to prove our  main result of this section.
In the proof,   $G$ is as described above and  the notation  and definitions are used as introduced throughout this subsection.
Recall Theorem \ref{mainEC}.

\begin{reptheorem}{mainEC}
For each even     integer   $k\geqslant28$,
$$\mathrm{sat}(n, C_k)= \frac{k-3}{k-4}n +O(1).$$
\end{reptheorem}

\begin{proof}
As we  already mentioned,  using    the upper bound presented in \eqref{fur},   we should  establish  that $|E(G)|\geqslant (1+\alpha)|V(G)|+O(1)$.

Suppose that   $G$ has  many  paths of  length $2\ell-5$   on degree-two vertices  and there are two vertices $x, y\in V(G)$  such that one endpoint of
each of those paths is  adjacent to $x$ and the other endpoint is    adjacent to $y$.
In this case,  by removing all these paths from $G$ except two of them,  we find a $C_{2\ell}$-saturated graph $G'$  such that $|E(G')|-(1+\alpha)|V(G')|=|E(G)|-(1+\alpha)|V(G)|$.
Therefore, if $|E(G')|\geqslant (1+\alpha)|V(G')|+O(1)$, then we will have  $|E(G)|\geqslant (1+\alpha)|V(G)|+O(1)$.
So,  we may assume that   the number of such  paths in $G$  is at most two.

In order to establish that  $|E(G)|\geqslant (1+\alpha)|V(G)|+O(1)$, as we explained  before,  our strategy is to distribute the value $|E(G)|$ on  $V(G)\setminus (M\cup A)=B\cup D_0\cup D$  such  that each  vertex in $B\cup D_0\cup D$ takes an edge-weight at least   $1+\alpha$.
For each  vertex    $v\in D_0\cup D$, an edge-weight equal to $1$ comes from the  designated  black edge incident with $v$.
In view of  Lemma \ref{lCvSH}(ii), we assigned   an extra   edge-weight equal to  $\alpha$  to each     vertex in $D\setminus\{v\in D_1 \, | \, \deg_G(v)=1\}$.
As   every  $G$-pendant
vertex from $D_1$ is    a neighbor of some    vertex from  $D_0$, we need   to assign  an extra edge-weight  equal to  $\alpha$  to every vertex  $x\in D_0$ if
$\epsilon(x)=1$,  and $2\alpha$    otherwise.
Also, we need to assign an edge-weight equal to  $1+\alpha$   to    every  vertex   $y\in B$  if
$\epsilon(y)=1$,   and $1+2\alpha$  otherwise. In view of  Lemma \ref{3Mcaseii},  it suffices to do these assignments to those  vertices   $x$ and $y$ that are roots of  none of  elements of  $\mathcal{F}$.

Let $x\in D_0$. By the definition  of $D_0$, $\deg_G(x)\geqslant2$.      If   $g(x)\geqslant1$, then  we may assign   an  extra   edge-weight   $1/2>2\alpha$ to $x$, we are done.
So,   we may assume that   $g(x)=0$. This implies that there is no  $\alpha/3$-tree with root $x$.  Assume that      $\gamma_1,  \ldots, \gamma_d$  are all values, not necessarily distinct,  such that $x$ is  adjacent to a   $\gamma_i$-tree    for $i=1,  \ldots, d$ and assume that  $d\geqslant2$. From   Lemma  \ref{akh1},  there is at most one $0$ among  $\gamma_1,  \ldots, \gamma_d$.   Moreover,   if $\deg_G(x)=3$, then  Lemma  \ref{akh2} implies that   $0$ and  $\alpha/2$ do not simultaneously appear among   $\gamma_1,  \ldots, \gamma_d$.     From these, we    deduce  that       $\gamma_1+\cdots+\gamma_d\geqslant\alpha$ and so  we may assign  an  extra    edge-weight  equal to  $\alpha$ to $x$.  Hence, we may assume that either $\deg_G(x)=2$ or $\epsilon(x)=2$.

First, assume that $\deg_G(x)=2$. We have  from   Lemma \ref{neigborone}(i)    that $\epsilon(x)=1$ and  so   there exists  a unique  $\gamma$-tree with root $x$. In view of      $\deg_G(x)=2$, we have $\gamma\neq\alpha/2$. For a contradiction, let   $\gamma=0$. This yields that
there is a path    on degree-two vertices of $G$ of length $2\ell-5$ whose one  endpoint
is  adjacent to $x$ and whose   other endpoint belongs to  $B\cup D_0$. From this and $\deg_G(x)=2$, we  get  a path    on degree-two vertices   of length $2\ell-4$ in $V(G)\setminus M$  which contradicts the definition of $M$.   This contradiction shows that  $\gamma=\alpha$. Now,   by  assigning    an      extra edge-weight  equal to $\alpha$       to $x$,  we are done.

Next, assume that $\epsilon(x)=2$. If  $\deg_G(x)=3$, then  there exists  a    $2\alpha$-tree with root $x$ and so  we may assign  an  extra    edge-weight  equal to  $2\alpha$ to $x$, we are done. If    $\deg_G(x)\geqslant4$, then   there are two     $\alpha$-trees   whose roots are $x$    and so we may assign  an  extra    edge-weight  equal to  $2\alpha$ to $x$, we are done.

Let $y\in B$. We are going to    assign an edge-weight equal to  $1+2\alpha$   to   $y$.
Denote by $h$    the number of       $0$-trees  with   root   $y$.  According to the definition of $0$-tree and by applying the above-mentioned property of $G$, we find  mutually    distinct  vertices  $y_1,   \ldots, y_{\lceil h/2\rceil} \in M\cup B\cup D_0$    and mutually vertex-disjoint paths  $P_1,   \ldots, P_{\lceil h/2\rceil}$  on degree-two vertices   of  $G$    having  length   $2\ell-5$ such that one endpoint of
$P_i$ is  adjacent to $y$ and its  other endpoint is    adjacent to $y_i$  for all indices  $i$.
It follows from
Lemma \ref{pkgeneral}  that $y$ is adjacent to all of $y_1,   \ldots, y_{\lceil h/2\rceil}$  through gray edges, meaning that    $g(y)\geqslant h/2$.
So, we have an extra edge-weight   at least    $g(y)(1/2)+(\deg_G(y)-g(y)-h-1)(\alpha/3)>(c-1)\alpha/3=1+2\alpha$, as     $g(y)\geqslant h/2$ and $\ell\geqslant14$. Now, by assigning    this    extra edge-weights      to $y$,  we are done.

The proof of the theorem is completed.
\end{proof}

\section{Concluding remarks}

In this paper, we establish that   there  is    a constant $c_{_{\mathlarger{F}}}$  such
that  $\mathrm{sat}(n, F)=c_{_{\mathlarger{F}}} n+O(1)$ when  $F$ is either a complete  multipartite graph  or  a    cycle graph   whose   length is an    even  number at least $28$. We believe that  our method can be applied  to find an asymptotic  formula for   $\mathrm{sat}(n, F)$ when  $F$ is a   cycle graph   whose   length is  a    small  even  or an odd number.
Using  the  method,  we were   able to reprove      the known result       $\mathrm{sat}(n, C_6)=4n/3  +O(1)$ with a  rather short and simple  proof.

\end{document}